\newtheorem{lemma}{Lemma}
\newtheorem{theorem}{Theorem}
\newtheorem{definition}{Definition}
\newtheorem{remark}{Remark}
\newtheorem{corollary}{Corollary}
\newtheorem{proposition}{Proposition}
\DeclareMathOperator\Vor{Vor}
\DeclareMathOperator\Diff{\mathbf{D}}
\DeclareMathOperator\Met{\mathbf{M}}
\DeclareMathOperator\diver{div}
\DeclareMathOperator\interp{I}
\DeclareMathOperator\DDiff{\mathbf{d}}
\def\R{{\mathbb R}}
\def\Z{{\mathbb Z}}
\def\sR{\R}
\newcommand\Id{{\rm Id}}
\def\sm{\setminus}
\def\ve{\varepsilon}
\newcommand\trans{\mathrm T}
\newcommand\cE{{\mathcal E}}
\newcommand\cO{{\mathcal O}}
\newcommand\cT{{\mathcal T}}
\newcommand\cQ{{\mathcal Q}}
\newcommand\cF{{\mathcal F}}
\newcommand\cG{{\mathcal G}}
\newcommand\stext[1]{\ \text{ #1 } \ }
\definecolor{BrickRed}{cmyk}{0, .89, .94, .28} 
\newcommand\alert[1]{\textcolor{BrickRed}{  #1  }}
\def\be{\begin{equation}}
\def\ee{\end{equation}}
\def\<{\langle}
\def\>{\rangle}
\def\orth{\mu}
\newcommand{\iref}[1]{\eqref{#1}}
\def\DD{{\bf D}}
\def\vp{\varphi}
\def\kappaD{{\boldsymbol \kappa}}
\def\DDMax{\overline \DDiff}
\def\DDMin{\underline \DDiff}
\def\WG{A-NN}
\def\QED{}
\begin{document}

\title{
Sparse Non-Negative Stencils for Anisotropic Diffusion
\thanks{
This work was partly supported by ANR grant MESANGE ANR-08-BLAN-0198.
%Grants or other notes
%about the article that should go on the front page should be
%placed here. General acknowledgments should be placed at the end of the article.
}
}
%\subtitle{Do you have a subtitle?\\ If so, write it here}

%\titlerunning{Short form of title}        % if too long for running head

\author{
J\'er\^ome Fehrenbach%
\footnote{
Institut de Math\'ematiques de Toulouse,
              Universit\'e Paul Sabatier, 
              31062 TOULOUSE CEDEX 9, France%
}%
\and Jean-Marie Mirebeau%
 \footnote{%
 CNRS, Laboratory CEREMADE, UMR 7534,
           University Paris Dauphine,
           Place du Mar\'echal De Lattre De Tassigny
            75775 PARIS CEDEX 16, France
 }
}

\maketitle

\begin{abstract}
We introduce a new discretization scheme for Anisotropic Diffusion, AD-LBR, on two and three dimensional cartesian grids.
The main features of this scheme is that it is non-negative and has sparse stencils, of cardinality bounded by $6$ in 2D, by $12$ in 3D, despite allowing diffusion tensors of arbitrary anisotropy. The radius of these stencils is not a-priori bounded however, and can be quite large for pronounced anisotropies.
Our scheme also has good spectral properties, which permits larger time steps and avoids e.g.\ chessboard artifacts.

AD-LBR relies on Lattice Basis Reduction, a tool from discrete mathematics which has recently shown its relevance for the discretization on grids of strongly aniso\-tropic Partial Differential Equations \cite{M12}.
We prove that AD-LBR is in 2D asymptotically equivalent to a finite element discretization on an anisotropic Delaunay triangulation, a procedure more involved and computationally expensive.
Our scheme thus benefits from the theoretical guarantees of this procedure, for a fraction of its cost. 
Numerical experiments in 2D and 3D illustrate our results.
%Insert your abstract here. Include keywords, PACS and mathematical
%subject classification numbers as needed.
% \PACS{PACS code1 \and PACS code2 \and more}
% \subclass{MSC code1 \and MSC code2 \and more}
\end{abstract}

\paragraph{keywords : } Anisotropic Diffusion, Non-Negative Numerical Scheme, Lattice Basis Reduction.

%Some bibliography.
%Some applications.

We consider throughout this paper a bounded smooth domain $\Omega \subset \R^d$, where $d\in \{2,3\}$ denotes the dimension, equipped with a continuous diffusion tensor $\Diff$. 
We do not impose any bound on the diffusion tensor anisotropy, and we are in fact interested in pronounced, non axis-aligned anisotropies. 
Anisotropic diffusion is here understood in the sense of \cite{W98}: the diffusion tensor $\Diff(z)$, at a point $z\in \Omega$, is a symmetric positive definite matrix whose eigenvalues may have different orders of magnitude. Our results are not relevant for isotropic diffusion with a variable scalar coefficient, as in the pioneering work of Perona and Malik \cite{PM90}. 

We address the discretization of the following energy $\cE$, defined for $u \in H^1(\Omega)$:
\be
\label{def:E}
\cE(u) := \int_\Omega \|\nabla u(z)\|^2_{\Diff(z)}dz.
\ee
We denote $\|e\|_M := \sqrt{\<e,M e\>}$, for any $e \in \R^d$, and any $M$ in the set $S_d^+$ of symmetric positive definite $d \times d$ matrices.
Gradient descent for the energy \iref{def:E} has the form of a parabolic PDE: 
\be
\label{pde}
\partial_t u = \diver( \Diff \nabla u).
\ee
This equation, Anisotropic Diffusion, is with its variants at the foundation of powerful image processing techniques. 
Some variants include curvature terms  \cite{OR90}, or diffusion-reaction terms \cite{CG93}. 
Time varying and solution dependent diffusion tensors can also be considered. 
A general exposition can be found in \cite{W98}, where various choices for the definition of the diffusion tensor $\DD$ from the image $u$, adapted to various applications, are proposed and discussed.

Our contribution in the discretization of the energy \iref{def:E} results in improved numerical solutions of \iref{pde}, in terms of accuracy and stability, for a minor increase in complexity. 
This extends to applications, such as Coherence Enhancing Diffusion and Edge Preserving Diffusion \cite{W98}, see the numerical experiments in \S \ref{sec:num}, which involve solving \iref{pde} using a solution dependent diffusion tensor $\DD = \DD(u)$.
For that purpose, one fixes a time step $\Delta T$, and solves for each integer $n \geq 0$ the linear diffusion equation $\partial_t u = \diver(\Diff_n \nabla u)$ on the interval $[n \Delta T, (n+1) \Delta T]$, with $\Diff_n := \Diff(u(n \Delta T))$.
%Indeed, such non-linearities can be addressed by fixing a time step $\Delta T$, and solving 
%regardless of the fact that the resulting nonlinear PDE $\partial_t u = \diver( \Diff_u \nabla u)$ may not anymore be the gradient descent of an energy. 
In these applications, the diffusion tensor $\DD(u)$ is typically defined in terms of the structure tensor \cite{W98} of $u$, in such way that diffusion is pronounced within image homogeneous regions, and \emph{tangentially} along image edges, but not across edges.

In two dimensions, AD-LBR strictly speaking is not the first non-negative scheme for anisotropic diffusion: the proof of Theorem 6 in \cite{W98} implicitly defines an alternative 6-point non-negative scheme. %, that we call \WG. 
This alternative scheme does however lack many of the qualities of AD-LBR: it leads to axis aligned artifacts, spectral aberrations, stencils of larger radius, reduced numerical accuracy, and does not extend to 3D. A detailed description and comparison is presented in \S \ref{sec:Schemes}.
%Two dimensional anisotropic diffusion is discussed in \cite{W98}
%Two dimensional anisotropic diffusion  is discussed in \cite{W98} (chapter 3), where it is proved that a nonnegative scheme exists for a stencil of size $(2m+1)\times (2m+1)$, where $m$ depends on the anisotropy of the diffusion tensor. On the other hand, our approach provides a stencil composed of 6 points   with non-negative weights. 
%The distance from these points to the stencil center depends on the anisotropy of the tensor, see Remark \ref{rem:radius} page \pageref{rem:radius}. %the reference point 

Consider a scale parameter $h>0$, and a sampling $\Omega_h$ of the domain $\Omega$ on the cartesian grid $\Z^d$, rescaled by $h$: % and (if needed) offsetted of a vector $z_0 \in \R^2$: 
with obvious notations 
\begin{equation*}
%\label{def:Omegah}
\Omega_h := \Omega \cap  h \Z^d.
\end{equation*}
We introduce a novel discretization of the energy \iref{def:E}, referred to as AD-LBR (Anisotropic Diffusion using Lattice Basis Reduction). It is a sum of squared differences of a discrete map $u\in L^2(\Omega_h)$
\be
\label{def:E*}
\cE_h(u) := h^{d-2} \sum_{z\in \Omega_h} \sum_{e \in V(z)} \gamma_z(e)\, |u(z+h e)-u(z)|^2
\ee
The stencils $V(z)\subset \Z^d$, $z\in \Omega_h$, are symmetric and have cardinality at most $6$ in 2D, $12$ in 3D. The coefficients $\gamma_z(e)\geq 0$ are non-negative.
They are constructed using a classical tool from discrete mathematics, Lattice Basis Reduction, which allows to cheaply build efficient stencils for grid discretizations of Partial Differential Equations (PDEs) involving strongly anisotropic diffusion tensors or Riemannian metrics.
This approach has been applied to anisotropic static Hamilton-Jacobi PDEs in \cite{M12}, resulting in a new  numerical scheme: Fast Marching using Lattice Basis Reduction (FM-LBR). Substantial improvements were obtained in comparison with earlier methods, in terms of both accuracy and complexity. 

The paper is organized as follows. 
We describe the stencils of the two dimensional AD-LBR in \S\ref{sec:intro}, and state our main 2D result: the asymptotic equivalence of AD-LBR with a finite element discretization on an Anisotropic Delaunay Triangulation. 
Section \S\ref{sec:3D} provides additional details on the two dimensional stencils of AD-LBR, and describes the three dimensional ones.
%is devoted to the construction of the three dimensional stencils of the AD-LBR. %, and the proof of the coefficients non-negativity. 
The more technical \S\ref{sec:FE} details the proof of the 2D equivalence result stated in \S\ref{sec:intro}. Two and three dimensional numerical experiments are presented in \S \ref{sec:num}, including qualitative and quantitative comparisons with five other numerical schemes.
%(*...*)
%{\em Notation:}\\ $u,v$: images. $\Diff$: diffusion tensor. $x,y,z,p,q,r \in \Omega_h$: points. 
%$e,f,g \in \R^2$: vectors.
%$h>0$: grid scale.
%$S_d$: symmetric $d\times d$ matrices. $S_d^+$: positive definite in $S_d$.

\section{Description of the scheme, and main results}
\label{sec:intro}

Our numerical scheme, Anisotropic Diffusion using Lattice Basis Reduction (AD-LBR), involves the construction of stencils whose geometry is tailored after the local diffusion tensor.
Its essential feature is non-negativity: the discrete energy $\cE_h(u)$ is written as a sum \iref{def:E*} of squared differences of values of $u$, with non-negative weights $\gamma_z(e) \geq 0$. %non-negative 
This discretization is consistent if for each $z\in \Omega_h$, and any smooth $u$, 
%The consistency of this discretization is satisfied 
\be
\label{approxEnergy}
h^d \|\nabla u(z) \|_{\Diff(z)}^2 = h^{d-2} \sum_{e \in V(z)}   \gamma_z(e)\, \<\nabla u(z), h e\>  ^2.
\ee
Indeed, the left hand side approximates the contribution of the ``voxel'' $z+[-h/2,h/2]^d$ to the integral \iref{def:E}, while the right hand side is obtained by inserting the first order approximation $u(z+he) \simeq u(z) + \<\nabla u(z), h e\>$ in \iref{def:E*}.
The identity \iref{approxEnergy} is in turn equivalent to 
%Equivalently, the diffusion tensor admits the decomposition
%The consistency of this discretization requires that for all $z \in \Omega_h$
\be
\label{DSum}
\Diff(z) = \sum_{e \in V(z)} \gamma_z(e) \, e e^\trans.
\ee
The next lemma shows how to obtain such a decomposition in 2D. 
%In the case $M = \Id$, it is not hard to recognize the classical cotangent weights. (*reference ? So, proof is useless here ?*) %No : not laplace beltrami
%Such a decomposition with non-negative coefficients is tied to a geometric property of the stencil $V(z)$, with respect to the local metric $\Met(z)$. 
%We denote by $\Id$ the $2\times 2$ identity matrix, and 
We denote by $u^\perp := (-b,a)$ the rotation of a vector $u =(a,b)\in \R^2$ by $\pi/2$, in such way that for all $v \in \R^2$:
\begin{equation*}
\<u^\perp, v\> = \det(u,v).
\end{equation*}
% !! Cotangent weights !!

%This is discussed in the next lemma in the special case of the $2\times 2$ identity matrix, denoted by $\Id$. We denote by $\det$ the determinant of two vectors in $\R^2$, and of three vectors in $\R^3$.
\begin{lemma}
\label{lem:Sum}
Let $e_0,e_1,e_2 \in \R^2$ be such that $e_0+e_1+e_2=0$, and $|\det(e_1,e_2)|=1$.
Then for any $D \in S_2^+$, with the convention $e_{3+i} := e_i$:
\be
\label{DSum0}
D = - \sum_{0 \leq i \leq 2} \<e_{i+1}^\perp,D e_{i+2}^\perp\> \, e_i e_i^\trans.
\ee
%Then for any $M\in S_2^+$:
%\be
%\label{MInvSum0}
%M^{-1} = 2\sum_{1 \leq i \leq 3} \gamma_i e_i e_i^\trans, \ \text{ with } \gamma_i := \frac{-\<e_{i+1},M e_{i+2}\>}{2 \alpha^2 \det(M)}.
%\ee
\end{lemma}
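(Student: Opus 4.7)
The plan is to verify the matrix identity (\ref{DSum0}) by checking that both sides induce the same symmetric bilinear form on the basis $(e_1^\perp, e_2^\perp)$ of $\R^2$, which is indeed a basis since $|\det(e_1^\perp, e_2^\perp)| = |\det(e_1, e_2)| = 1$. Both sides are symmetric, so it suffices to compare three scalar quantities, obtained by pairing with $(e_j^\perp, e_k^\perp)$ for $(j,k) \in \{(1,2), (1,1), (2,2)\}$.

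First I would record the elementary pairings $\<e_j^\perp, e_i\> = \det(e_j, e_i)$ and $\<e_i, e_k^\perp\> = \det(e_k, e_i)$. Setting $\epsilon := \det(e_1, e_2) \in \{-1, +1\}$, the relation $e_0 = -(e_1 + e_2)$ gives $\det(e_i, e_{i+1}) = \epsilon$ cyclically, while $\det(e_i, e_i) = 0$. Denote by $A$ the right-hand side of (\ref{DSum0}).

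For the off-diagonal test $(j,k) = (1,2)$, the expansion
$$\<e_1^\perp, A e_2^\perp\> = -\sum_i \<e_{i+1}^\perp, D e_{i+2}^\perp\> \det(e_1, e_i)\det(e_2, e_i)$$
collapses because $\det(e_j, e_j) = 0$: only the $i=0$ term survives, producing $-\<e_1^\perp, D e_2^\perp\>\,(-\epsilon)(\epsilon) = \<e_1^\perp, D e_2^\perp\>$ as required. For the diagonal test $(j,k) = (1,1)$, two terms ($i = 0$ and $i = 2$) contribute, giving $\<e_1^\perp, A e_1^\perp\> = -\<e_1^\perp, D e_2^\perp\> - \<e_0^\perp, D e_1^\perp\>$. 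Substituting $e_0^\perp = -e_1^\perp - e_2^\perp$ and using the symmetry of $D$ to identify $\<e_2^\perp, D e_1^\perp\> = \<e_1^\perp, D e_2^\perp\>$ reduces this to $\<e_1^\perp, D e_1^\perp\>$, as desired. The case $(j,k) = (2,2)$ is entirely analogous by swapping indices $1 \leftrightarrow 2$.

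The argument is essentially bookkeeping: no compactness, estimate, or approximation is required. The only real obstacle is the careful tracking of cyclic indices and signs, particularly in the diagonal case where the relation $e_0^\perp + e_1^\perp + e_2^\perp = 0$ and the symmetry of $D$ must be combined to absorb the apparently extra $D$-terms.
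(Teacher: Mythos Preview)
Your proof is correct and follows essentially the same approach as the paper: both verify the identity by testing the symmetric bilinear forms on the basis $(e_1^\perp, e_2^\perp)$, using $e_0^\perp+e_1^\perp+e_2^\perp=0$ and the symmetry of $D$ to collapse the sums. The only cosmetic difference is that the paper checks the three quadratic values $\|e_1^\perp\|_{D'}^2$, $\|e_2^\perp\|_{D'}^2$, $\|e_1^\perp+e_2^\perp\|_{D'}^2$ rather than the off-diagonal entry directly, which amounts to the same thing via polarization.
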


\begin{proof}
Note that $1 %= \det(e_1, e_2) 
= |\det(e_2,e_0)| = |\det(e_0, e_1)|$.
Denoting by $D'$ the right hand side of \eqref{DSum0}, we obtain 
\begin{align*}
\<e_1^\perp, D' e_1^\perp\> &= -\<e_0^\perp, D e_1^\perp\> \<e_2, e_1^\perp\>^2 -\<e_1^\perp, D e_2^\perp\> \<e_0, e_1^\perp\>^2\\
&= -\<e_0^\perp+e_2^\perp, D e_1^\perp\> =  \<e_1^\perp, D e_1^\perp\>.
\end{align*}
Thus $\|e_1^\perp\|_{D'} = \|e_1^\perp\|_D$. Likewise $\|e_2^\perp\|_{D'} = \|e_2^\perp\|_D$, and $\|e_1^\perp+e_2^\perp\|_{D'}  = \|e_0^\perp\|_{D'} = \|e_0^\perp\|_D = \|e_1^\perp+e_2^\perp\|_D$.
Since $(e_1^\perp, e_2^\perp)$ is a basis of $\R^2$, the result follows.
\QED
\end{proof}

\begin{figure}
\centering
\includegraphics[width=2cm]{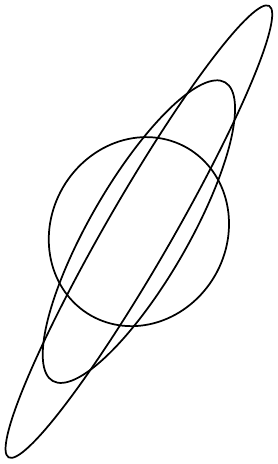}
\hspace{1cm}
\includegraphics[width=4cm]{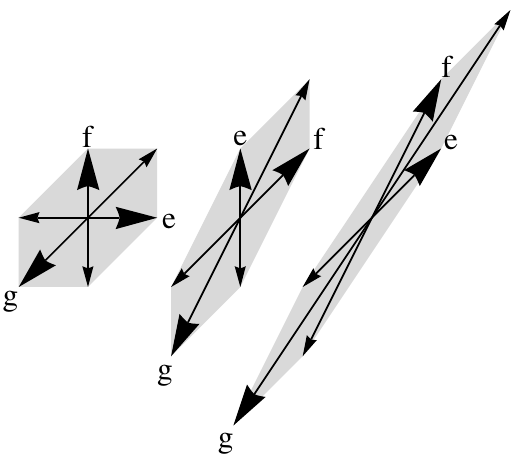}
\caption{
Right: the stencils associated to three matrices $M$ of anisotropy ratios $\kappa(M)$ equal to $1.1$, $3.5$, $8$ respectively. The ellipses $\{z \in \R^2;\, \|z\|_M=1\}$ are shown left; their principal axis is aligned with $(\cos(\pi/3) , \sin(\pi/3))$. More stencils are shown in \cite{M12}.
}
\label{fig:Stencil2d}
\end{figure}

The diffusion tensor $\Diff$ is meant to measure gradients, as in \iref{def:E}. 
In order to measure angles between vectors, we introduce a Riemannian
%In order to measure vectors, and in particular angles between vectors, we introduce a Riemannian 
metric
\footnote{%
The Laplace Beltrami operator associated to $\Met$ does \emph{not} coincide with $\diver(\Diff \nabla \cdot)$, unless $\Diff$ is identically of determinant $1$. This is not an issue for our application.
}
 $\Met$ on the domain $\Omega$, which is proportional to the inverse of $\Diff$: for all $z \in \Omega$
\begin{equation}
\label{def:M}
\Met(z) := \DDiff(z) \Diff(z)^{-1}, \, \text{ where} \, \DDiff(z) := \det(\Diff(z))^\frac 1 d.
\end{equation}
The normalizing factor $\DDiff(z)$ was chosen so as to normalize the metric determinant: $\det(\Met(z)) = 1$. This normalization reflects the fact that the construction of our stencil $V(z)$ depends on the preferred direction of diffusion, and on the amount of anisotropy, whereas the absolute strength of diffusion is irrelevant.
In dimension $d=2$, one easily checks that for any $z\in \Omega$ and any $e,f\in \R^2$, one has 
\begin{equation}
\label{eq:DM}
\<e^\perp, \Diff(z) f^\perp\> = \DDiff(z) \<e, \Met(z) f\>. 
\end{equation}

The AD-LBR is based on decompositions \eqref{DSum}, given by the previous lemma, with a family of vectors $(e_i)_{i=0}^2$ chosen so that the scalar products appearing in \eqref{DSum0} are non-positive.
%but with vectors $(e_i)_{i=1}^3$ of integer coordinates, and such that the weights appearing in \eqref{DSum0}
%The AD-LBR is based on decompositions \iref{DSum} similar to the previous lemma \eqref{DSum0}, but with vectors $(e_i)_{i=1}^3$ of integer coordinates, having pairwise negative scalar products $\<e_i, M e_{i+1}\> \leq 0$ so that the weights appearing in \eqref{DSum0} are non-negative. 
The adequate concept is that of $M$-obtuse superbase of $\Z^d$ \cite{CS92}.

\begin{definition}
\begin{itemize}
\item
A basis of $\Z^d$ is a family $(e_i)_{i=1}^d$ of elements of $\Z^d$ such that $|\det(e_1, \cdots, e_d)| = 1$. 
\item
A superbase of $\Z^d$ is a family $(e_i)_{i=0}^{d}$ such that $e_0+\cdots+e_d = 0$, and 
$(e_i)_{i=1}^d$ is a basis of $\Z^d$.
%$|\det(e_1, \cdots, e_d)| = 1$.
\end{itemize}
\end{definition}

\begin{definition}
Let $M \in S_d^+$.
A family $(e_i)_{i\in I}$ of vectors in $\R^d$ is said to be $M$-obtuse if $\<e_i, M e_j\> \leq 0$ for all distinct $i,j \in I$.
\end{definition}

In dimension $d \leq 3$, there exists for each $M \in S_d^+$ at least one $M$-obtuse superbase of $\Z^d$ \cite{CS92}. 
The practical construction of such superbases is discussed in \S \ref{sec:3D},
%Their construction is discussed in \S \ref{sec:3D}, 
and based on lattice basis reduction algorithms described in \cite{L1773,S01,NS09} (hence the name of our numerical scheme). This construction has a logarithmic numerical cost $\cO(\ln \kappa(M))$ in the anisotropy ratio of the matrix $M$:
\begin{equation}
\label{def:Kappa}
\kappa(M) := \max_{|u|=|v|=1} \frac{\|u\|_M}{\|v\|_M} = \sqrt{\|M\| \|M^{-1}\|}.
\end{equation}
%In contrast there exists a matrix $M \in S_4^+$ for which no superbase of $\Z^d$ exists [??].
%The AD-LBR energy \eqref{def:E*} is written in terms of 
The AD-LBR energy $\cE_h : L^2(\Omega_h) \to \R_+$, see \eqref{def:E*}, is in two dimensions written in terms of the following stencils and coefficients. Let $z \in \Omega$, and let $e_0, e_1, e_2$ be an $\Met(z)$-obtuse superbase of $\Z^2$. We set 
\be
\label{def:Vz}
V(z) := \{e_0,e_1,e_2,\, -e_0,-e_1,-e_2\},\\
\ee
and for $0 \leq i \leq 2$, with the convention $e_{i+3} := e_i$,
\be
\label{def:GammaZ}
\gamma_z(\pm e_i) := -\frac 1 2 \<e^\perp_{i+1}, \Diff(z) e^\perp_{i+2}\>.
\ee
Lemma \ref{lem:Sum} implies the announced decomposition \eqref{DSum}, and the weights $\gamma_z$ are non-negative in view of \eqref{eq:DM}. These weights $\gamma_z : \Z^2 \to \R_+$, extended by $0$ outside $V(z)$, do not depend on the choice of $\Met(z)$-obtuse superbase $(e_0,e_1,e_2)$, see Lemma \ref{lem:indepBasis}. % page \pageref{lem:indepBasis}. 
Stencils of the three dimensional AD-LBR are described in \S \ref{sec:3D}, and involve a construction of Selling\footnote{%
The authors would like to thank Professor P.\ Q.\ Nguyen for pointing out this $12$ points 3D stencil, which is simpler and sparser than the $14$ points stencil proposed by the authors in an earlier version of the manuscript.%
} \cite{S1874}.
The above description of the stencils $V(z)$ is suitable for periodic, reflected, and Dirichlet boundary conditions (extending $u$ by zero outside $\Omega_h$ in the latter case). In the case of Neumann boundary conditions, a slight modification is in order:
\begin{equation*}
V(z; \, h) := \{e \in V(z); \, z+ h e \in \Omega_h\}.
\end{equation*}

We have so far established three strongpoints of the AD-LBR:
\begin{description}
\item[Non-negativity.]
Off diagonal coefficients of the symmetric semi-definite $N \times N$ matrix, $N = \#(\Omega_h)$, associated to the energy $\cE_h$ are non-positive, while diagonal coefficients are positive.
\item[Sparsity.] Stencil cardinality is uniformly bounded, without restriction on the anisotropy ratio $\kappa(\Diff(z))$ of the diffusion tensor.
%: \linebreak$\#(V(z)) \leq 6$ in 2D, and $\#(V(z)) \leq 14$ in 3D, for all $z\in \Omega$. 
\item[Complexity.] The construction of the stencil $V(z)$, and of the associated coefficients $\gamma_z$, has a logarithmic cost  $\cO(\ln \kappa(\Diff(z)))$ in the anisotropy ratio of the diffusion tensor. 
\end{description}
The next result, Theorem \ref{th:FE}, restricted to the two dimensional case, establishes that AD-LBR is asymptotically equivalent to a more involved and computationally intensive procedure: a finite element discretization of the energy \iref{def:E}, on an \emph{Anisotropic Delaunay Triangulation} (ADT, see \cite{LS03} and below) of the domain $\Omega$. 
Under the assumptions of Theorem \ref{th:FE}, AD-LBR benefits from two additional guarantees, that we state informally and without proof.

\begin{description}
\item[No chessboard artifacts.] Some numerical schemes for anisotropic diffusion suffer from chessboard artifacts, in the sense that periodic artifacts develop at the pixel level. 
Such artifacts cannot develop in finite element discretizations, since they would lead to high frequency oscillations of the finite element interpolant, and therefore to an increase of the energy \iref{def:EFE}. 
The asymptotic equivalence of the AD-LBR with a finite element discretization also rules out these defects.

\item[Spectral correctness.] The $n$-th smallest eigenvalue $\lambda_n(h)$ of the symmetric matrix associated to $h^{-d}\cE_h$ \iref{def:E*}, converges as $h\to 0$ towards the $n$-th smallest eigenvalue $\lambda_n$ of the continuous operator $-\diver(\Diff \nabla)$, for any given integer $n \geq 0$. See Figure \ref{fig:Eigen} page \pageref{fig:Eigen} for an illustration.
This follows from a similar property of the finite element energy $\cE'_h$ \iref{def:EFE}, and from the asymptotic equivalence \iref{EnEpn}.
\end{description}

Our convergence result, Theorem \ref{th:FE} below, is specialized to the case of a square periodic domain, which covers reflecting boundary conditions frequently used in image processing. 
Since the grid discretization must be compatible with the boundary conditions, any scale parameter $h$ appearing in the rest of the paper is assumed to be the inverse of a positive integer:
\begin{equation*}
h \in \{1/n; \, n \geq 1\}.
\end{equation*}

%Generalizing it to Neumann or Dirichlet on a general domain would require to pay a cautious attention .

\begin{theorem}
\label{th:FE}
Let $\Omega$ be the unit square $[0,1[^2$, equipped with periodic boundary conditions. Let $\Diff : \overline \Omega \to S_2^+$ be a (periodic) diffusion tensor with Lipschitz regularity, and let $\Met$ be the Riemannian metric defined by \eqref{def:M}. % be the associated Riemannian metric.
When $h$ is sufficiently small, the periodic Riemannian domain $(\Omega,\Met)$ admits an Anisotropic Delaunay Triangulation $\cT_h$, with collection of vertices $\Omega_h := \Omega \cap h \Z^2$. For $u\in L^2(\Omega_h)$, define
\be
\label{def:EFE}
\cE'_h(u) := \int_\Omega \|\nabla  (\interp _{\cT_h} u)(z) \|^2_{\Diff(z)} dz,
\ee
where $\interp_\cT$ denotes the piecewise linear interpolation operator on a triangulation $\cT$.
%Then there exists a sequence $(\ve_n)_{n \geq 0}$, converging to zero, such that for all sufficiently large $n$ and 
%Then for all sufficiently large $n$, and 
Then for some constant $c=c(\Diff)$, independent of $u$ and $h$, 
\be
\label{EnEpn}
(1- c h) \cE_h(u) \leq \cE'_h(u) \leq (1+ c h) \cE_h(u).
\ee
%Note : both are first order discretizations, so not much more can be expected.
%Important : all spectral guarantees are transferred.
\end{theorem}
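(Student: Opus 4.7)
The plan is to write both $\cE_h(u)$ and $\cE'_h(u)$ as sums of squared differences $(u(z+he)-u(z))^2$ weighted by non-negative coefficients of the form \iref{def:GammaZ}, and to check that on each edge of $\cT_h$ the finite-element and AD-LBR weights agree up to relative error $O(h)$, inherited from the Lipschitz regularity of $\Diff$.

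\textbf{Step 1: Local lattice structure of $\cT_h$.} For $h$ small, I would first prove that each vertex $z \in \Omega_h$ admits an $\Met(z)$-obtuse superbase $(e_0(z), e_1(z), e_2(z))$ of $\Z^2$ such that the six triangles of $\cT_h$ incident to $z$ form the hexagonal star generated by this superbase, and such that for every edge $\{z, z+he\}$ of $\cT_h$, the vector $e$ lies in the superbase at $z$ and $-e$ in the one at $z+he$. This is the planar Riemannian analogue of Selling's classical characterization of lattice Delaunay triangulations; its extension from a constant to a Lipschitz metric rests on the stability of the empty-ball property combined with the fact that $\Met$ varies by $O(h)$ over distances of order $h$. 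Some care is needed at the ``transition'' cells where the local obtuse superbase changes, but the independence of $\gamma_z$ from the choice of $\Met(z)$-obtuse superbase (Lemma~\ref{lem:indepBasis}) keeps the ensuing algebraic identities canonical. This step carries the bulk of the technical work and is the main obstacle.

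\textbf{Step 2: Contribution of a single triangle.} Take a triangle $T \in \cT_h$ with edge vectors $he_0, he_1, he_2$ summing to zero, and let $\bar\Diff_T := |T|^{-1} \int_T \Diff$. On $T$ the interpolant $\interp_{\cT_h} u$ is affine with constant gradient, and $h\<e_i, \nabla \interp u\> = u(P^+_i) - u(P^-_i)$, where $P^\pm_i$ denote the two endpoints of the edge of $T$ parallel to $e_i$. Since $\nabla \interp u$ is constant on $T$, one has exactly $\int_T \|\nabla \interp u\|^2_\Diff = |T|\,\|\nabla\interp u\|^2_{\bar\Diff_T}$. Applying Lemma~\ref{lem:Sum} to $h\,\nabla \interp u$ with $D = \bar\Diff_T$, and using $|T| = h^2/2$, yields
\begin{equation*}
\int_T \|\nabla \interp u\|^2_\Diff \;=\; \sum_{i=0}^{2} \gamma_T(e_i)\,\bigl(u(P^+_i) - u(P^-_i)\bigr)^2,
\end{equation*}
with non-negative weights $\gamma_T(e_i) := -\tfrac12 \<e_{i+1}^\perp, \bar\Diff_T e_{i+2}^\perp\>$, of exactly the form \iref{def:GammaZ}.

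\textbf{Step 3: Edge accounting and conclusion.} Summing over $\cT_h$, each edge $\{z, z+he\}$ is shared by two triangles $T_1, T_2$ and therefore receives total finite-element weight $\gamma_{T_1}(e) + \gamma_{T_2}(e)$. The same edge contributes twice to $\cE_h$, once from $z$ with weight $\gamma_z(e)$ and once from $z+he$ with weight $\gamma_{z+he}(-e) = \gamma_{z+he}(e)$ by symmetry of the stencil. By the Lipschitz regularity and uniform positive-definiteness of $\Diff$, the four quantities $\gamma_{T_1}(e), \gamma_{T_2}(e), \gamma_z(e), \gamma_{z+he}(e)$ differ pairwise by a relative error $O(h)$, uniformly in $z$ and $e$, because they are continuous functions of $\Diff$ evaluated at points within distance $O(h)$ of one another. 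Summing over edges then yields \iref{EnEpn} with a constant $c$ depending only on $\Diff$. Once Step 1 is established, Step 2 is a direct application of Lemma~\ref{lem:Sum} and Step 3 is a routine Lipschitz estimate.
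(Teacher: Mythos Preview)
Your overall plan---write $\cE'_h$ as a sum of squared differences with per-edge weights and compare these to the AD-LBR weights $\gamma_z$---is exactly the paper's strategy. But two of your key claims are false, and the paper's proof is organized precisely around repairing them.

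\textbf{Step~1 overclaims the structure of $\cT_h$.} You assert that the triangles of $\cT_h$ incident to a vertex $z$ always form the hexagonal star generated by an $\Met(z)$-obtuse superbase, i.e.\ that $V_h(z)=V(z)$. This is not true in general. When the lattice $\Z^2$ is nearly $\Met(z)$-orthogonal (quantified by $\mu(\Met(z))$ small), the ADT stencil $V_h(z)$ may contain $e-f$ instead of, or in addition to, $e+f$; it can have $4$, $6$, or $8$ elements. The paper's Lemma~\ref{lem:EqualUnlessAlmostOrthogonal} establishes only the sandwich $\{e,f,-e,-f\}\subset V_h(z)\subset\{\pm e,\pm f,\pm(e+f),\pm(e-f)\}$ in this regime. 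Your remark that ``some care is needed at transition cells'' and that Lemma~\ref{lem:indepBasis} makes things canonical does not address this: the issue is not a choice of superbase but a genuine mismatch between $V_h(z)$ and $V(z)$.

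\textbf{Step~3's relative-error estimate fails for degenerate weights.} You claim the four weights $\gamma_{T_1}(e),\gamma_{T_2}(e),\gamma_z(e),\gamma_{z+he}(e)$ differ pairwise by a \emph{relative} error $O(h)$. But the weight on the ``diagonal'' direction $g=-e-f$ is $\gamma_z(\pm g)=-\tfrac12\<e^\perp,\Diff(z)f^\perp\>=\tfrac12\DDiff(z)\mu(\Met(z))$, which can be arbitrarily small---indeed zero when $\Z^2$ admits an $\Met(z)$-orthogonal basis. The Lipschitz variation of $\Diff$ contributes an \emph{absolute} error $O(h)$, so the relative error on these small weights is unbounded. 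This is not a cosmetic issue: summing $O(h)$ absolute errors over all edges gives a term of size $O(h)\cF'_h(u)$ for an auxiliary energy $\cF'_h$ built on the larger stencil $W'$, and there is no a~priori reason this is controlled by $\cE_h(u)$.

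The paper's remedy is to (i) bound $|\gamma^h_p-\gamma_p|$ \emph{absolutely} by $(\ve_h+C_0\theta_h)\eta'_p$ where $\eta'_p$ is the indicator of $W'(p)$ (Lemma~\ref{lem:EEpFp}), then (ii) prove two separate inequalities $\cF'_h\le C_1\cF_h$ and $\cF_h\le C_2\cE_h$ (Lemmas~\ref{lem:FpF} and~\ref{lem:FE}) to close the loop. The second of these uses that the weights on the \emph{non-degenerate} directions $\pm e,\pm f$ are bounded below, and the first uses a neighbor-stability argument for reduced bases (Lemma~\ref{lem:uniqueness}). Without this detour through $\cF_h,\cF'_h$, the degenerate-weight regime cannot be handled.
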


\begin{figure}
\centering
\includegraphics[width=4cm]{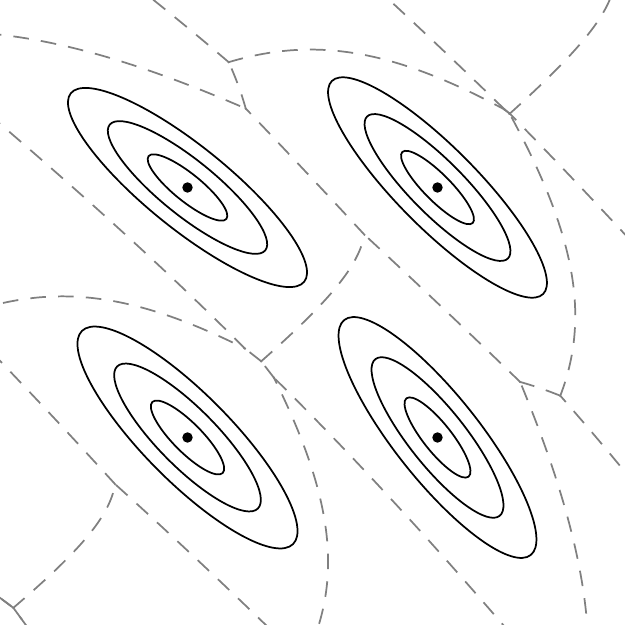}
\hspace{0.1cm}
\includegraphics[width=4cm]{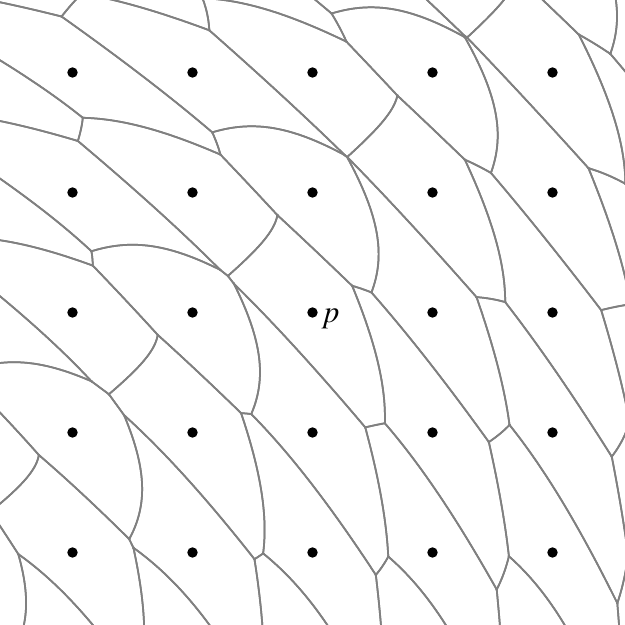}
\caption{
The distance $\delta_p(q)$, from a grid point $p$ to $q\in \R^2$, is defined in terms of the local metric $\Met(p)$, see \eqref{def:DeltaPQ}. The level lines $\{q \in \R^2; \, \delta_p(q) = r\}$ are ellipses (left). The collection of points $q\in \R^2$ closer to $p$ than to any other grid point is the Voronoi region of $p$ (left: the boundaries of Voronoi regions are shown dashed). The Voronoi diagram (right) is the collection of all Voronoi regions.
}
\label{fig:Vor}
\end{figure}

\begin{figure}
\centering
\includegraphics[width=4cm]{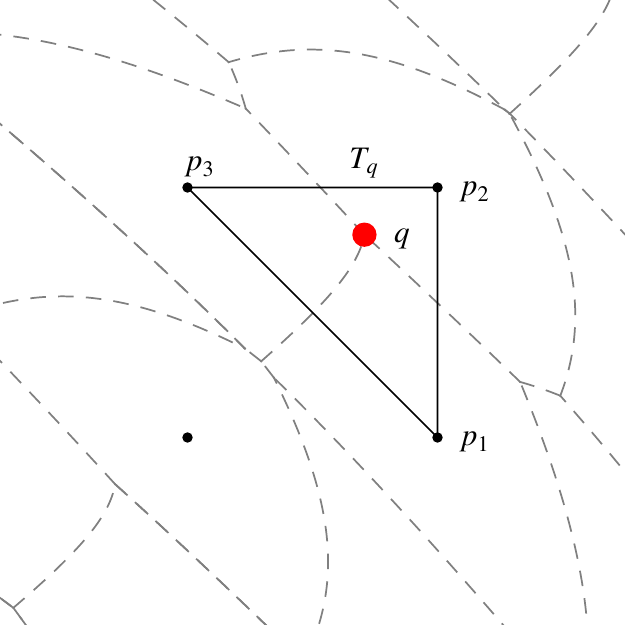}
\hspace{0.1cm}
\includegraphics[width=4cm]{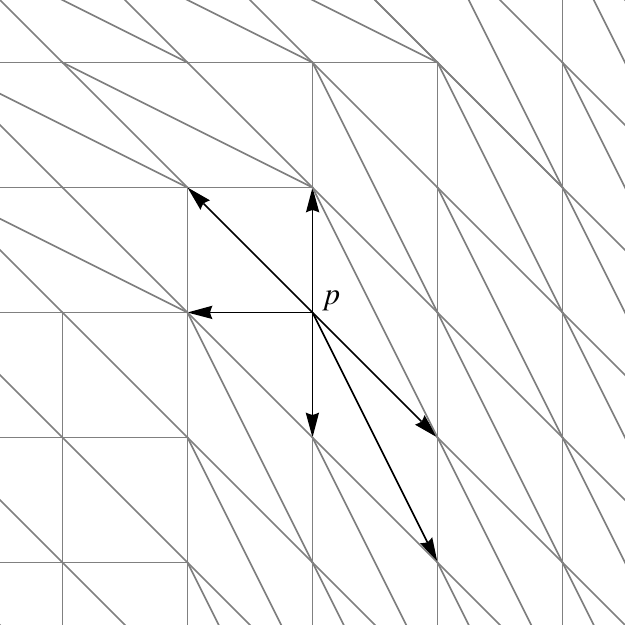}
\caption{
A Voronoi vertex $q$ is a point where the Voronoi regions of at least three grid points $(p_i)_{i=1}^k$ intersect (left: Voronoi region boundaries are shown dashed); here $k=3$ (values $k>3$ are non-generic). 
%(Values $k>3$ are non-generic%
%, since no more than three Voronoi regions may generically intersect at any point
%.) 
The dual Voronoi cell $T_q$, generically a triangle, is the convex envelope of the grid points $\{p_i; \, 1 \leq i \leq k\}$ (left). The collection of dual Voronoi cells $T_q$ defines a polygonization $\cQ_h$, generically a triangulation, and the Anisotropic Delaunay Triangulation $\cT_h$ is obtained by arbitrarily triangulating (if necessary) the elements of $\cQ_h$. Here $\cT_h = \cQ_h$ (right). The stencil $V_h(p)$, of a vertex $p$ of $\cT_h$, see \eqref{def:Vhp}, is represented by arrows (right).%The Anisotropic Delaunay Triangulation is the collection of dual Voronoi cells (right). 
}
\label{fig:Del}
\end{figure}

Let us mention that the finite element discretization on an ADT is a more general procedure than AD-LBR, since it does not require the domain $\Omega$ to be sampled on a grid. This flexibility can be used to locally increase the density of vertices, in places where solution $u$ is expected to be less regular, or to insert vertices exactly on $\partial \Omega$ for a better discretization of boundary conditions. (Such refinements are however generally incompatible with image processing since the unknowns, the pixel values, lie by construction on a fixed and given cartesian grid.) Here and as often, the performance of AD-LBR is at the cost of its specialization.

%Heuristically 

%Heuristically, AD-LBR can be regarded as a simplified 
%Heuristically, AD-LBR can be regarded as a simplification of the finite element discretization on an ADT, in the special case where the vertices lie on a grid, 

%In summary, AD-LBR can be regarded as a ``special case`` of the finite element discretization on ADT, which enjoys performance benefits 
%Here and as often, the performance of AD-LBR is at the cost of its specialization. 

The proof of Theorem \ref{th:FE} is postponed to \S \ref{sec:FE}, but for the sake of concreteness, we describe here the concept of Anisotropic Delaunay Triangulation (ADT) \cite{LS03}. In the rest of this introduction, and in \S \ref{sec:FE}, we assume as in Theorem \ref{th:FE} that the diffusion tensor $\Diff$ is defined on the square $[0,1]^2$ and satisfies periodic boundary conditions. We extend it, as well as the metric $\Met$, to the whole plane $\R^2$ by periodicity. 

We specialize the concept of ADT \cite{LS03}, to the domain $\R^2$ and the collection of vertices $h \Z^2$.
For that purpose, we introduce some notations.
For all $p,q \in \R^2$,  we denote by $\delta_p(q)$ the distance from $p$ to $q$, as measured by the metric at the point $p$: 
\begin{equation}
\label{def:DeltaPQ}
\delta_p(q) := \|q-p\|_{\Met(p)}.
\end{equation}
We denote by $\Delta_h(q)$ the least distance from a point $q\in \R^2$, to the grid $h\Z^2$: 
\be
\label{def:Delta}
\Delta_h(q) := \min_{p \in h\Z^2} \delta_p(q).
\ee
We introduce the Voronoi cell $\Vor_h(p)$ of a grid point $p \in h\Z^2$, which is the collection of points $q\in \R^2$ closer to $p$ than to any other grid point: 
\begin{equation}
\label{def:Vor}
\Vor_h(p) := \{q\in \R^2; \, \delta_p(q) = \Delta_h(q)\}.
\end{equation}
The collection of Voronoi cells is referred to as the Voronoi diagram, see Figure \ref{fig:Vor}. 
A Voronoi vertex is a point $q\in \R^2$ at which at least three distinct Voronoi regions intersect: $(\Vor_h(p_i))_{i=1}^k$, $k \geq 3$, $p_i \in h \Z^2$. We attach to $q$ a \emph{dual} Voronoi cell $T_q$, defined as the convex hull of the points $(p_i)_{i=1}^k$, see Figure \ref{fig:Del}. %, referred to as a dual Voronoi cell. 

The geometric dual $\cQ_h$, of the Voronoi diagram, is defined as the collection of all dual Voronoi cells $T_q$. %the convex polygons $T_q$, for all Voronoi vertices $q$. 
Note that, generically on the metric $\Met$, no more than three Voronoi regions can intersect at any point in $\R^2$, thus the elements of $\cQ_h$ are generically triangles. 
%Tq is a triangle or a parallelogram
If $h$ is small enough, we show in \S \ref{sec:FE} (using the Dual Triangulation Theorem in \cite{LS03}) that $T_q$ is a strictly convex polygon, of vertices $(p_i)_{i=1}^k$ with the above notations, and that $\cQ_h$ is a polygonization (generically a triangulation) of $\R^2$, with vertices $h\Z^2$. 

Since the metric $\Met$ and the vertices $h \Z^2$ are periodic (recall that $h = 1/n$ for some integer $n \geq 1$), arbitrarily triangulating the elements of $\cQ_h$, respecting periodicity, yields a periodic triangulation $\cT_h$. %This construction was presented in \cite{LS03} for more general dom
\begin{definition}[ADT, %Anisotropic Delaunay Trian\-gu\-la\-tion, 
Labelle and Shewchuk \cite{LS03}]
\label{def:ADT}
The triangulation $\cT_h$ obtained by the above construction is referred to as an ADT of the domain $\R^2$, with collection of vertices $h \Z^2$, and underlying Riemannian metric $\Met$.
%Under the above assumptions, a, referred to as an Anisotropic Delaunay Triangulation. 
Since $\cT_h$ is $\Z^2$-periodic, we also regard it as an ADT of the periodic unit square $\Omega$. %it can be regarded as a triangulation of the periodic square $\Omega$.
\end{definition}

We establish in \S3.1 the existence of the ADT $\cT_h$. Incidentally, we show in Lemma \ref{lem:DescribeT} (iii) page \pageref{lem:DescribeT} that the angles of the elements of $\cT_h$, measured with respect to the local metric $\Met$, are asymptotically acute. This geometrical property (which holds thanks to our special choice of triangulation vertices, on a grid) is linked to the non-negativity of AD-LBR: indeed, it is known that finite elements discretizations such as \eqref{def:EFE} yield non-negative numerical schemes, and the discrete maximum principle, if the mesh satisfies a non-obtuse angle condition, see Lemma 3.1 in \cite{H10}.

Subsection \S3.2 is devoted to the study of $M$-obtuse superbases of $\Z^2$, and their cousins $M$-reduced bases of $\Z^2$, on which the AD-LBR relies: we discuss their characterization, uniqueness and stability properties.
We study in \S3.3 the 
%For each $p \in h \Z^2$, define the
finite element stencils, defined for $p\in h \Z^2$ by
\begin{equation}
\label{def:Vhp}
V_h(p) := \{e \in \Z^2; [p,p+h e]\text{ is an edge of } \cT_h\},
\end{equation}
see Figure \ref{fig:Del} (right).
We show that $V_h(p)$ coincides with the AD-LBR stencil $V(p)$, unless the lattice $\Z^2$ admits a basis \emph{almost orthogonal} with respect to the scalar product associated to $\Met(p)$, see Lemma \ref{lem:EqualUnlessAlmostOrthogonal}. 
This is tied to the fact that orthogonal grids admit several (usual) Delaunay triangulations.
Overcoming this technical difficulty, we conclude the proof of Theorem \ref{th:FE}.

Note that the construction of the ADT $\cT_h$ is not easy to parallelize, 
in particular when anisotropy is pronounced since the Voronoi regions of far away points interact.  The construction of $\cT_h$ also involves solving polynomial equations of degree four, because Voronoi regions boundaries are conics, and Voronoi vertices must be identified at their intersections. In contrast, the AD-LBR stencils are independent of each other, and the numerical cost of their construction only grows logarithmically with the metric anisotropy.
 %their construction is essentially contained in the two lines program \iref{GaussAlgo}.

\section{
Construction of obtuse superbases, and three dimensional stencils
}
\label{sec:3D}

\begin{figure}
\centering
\begin{tabular}{cc}
\includegraphics[width=4cm]{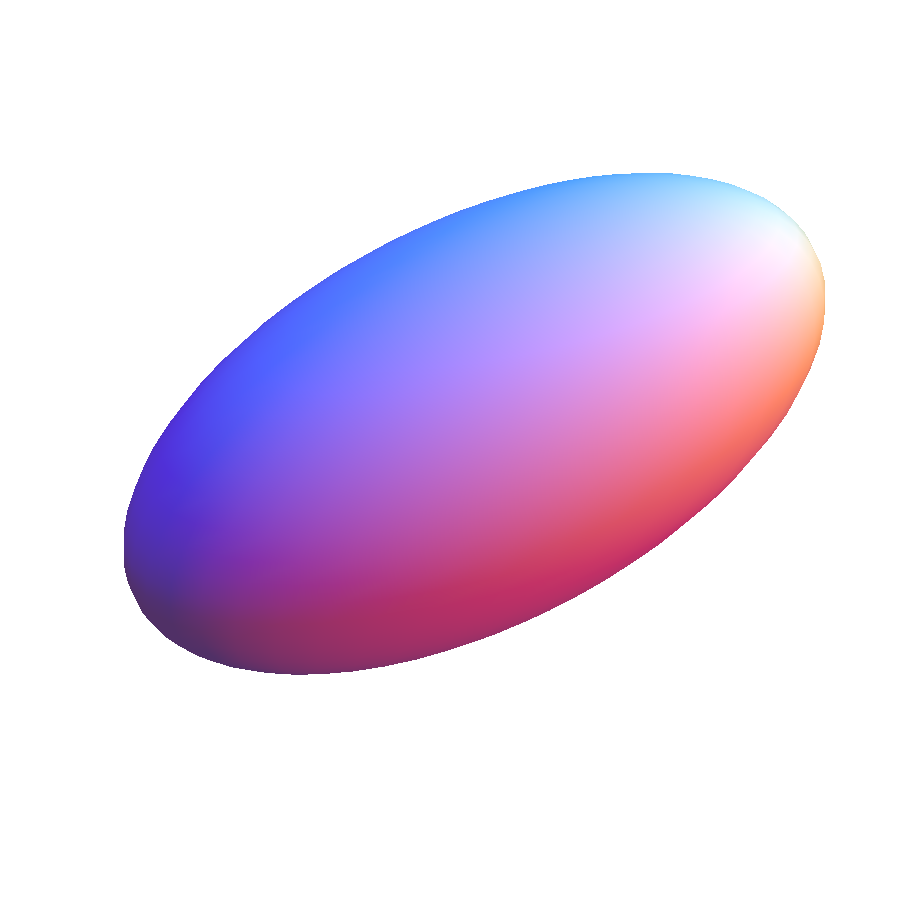}
&
\includegraphics[width=4cm]{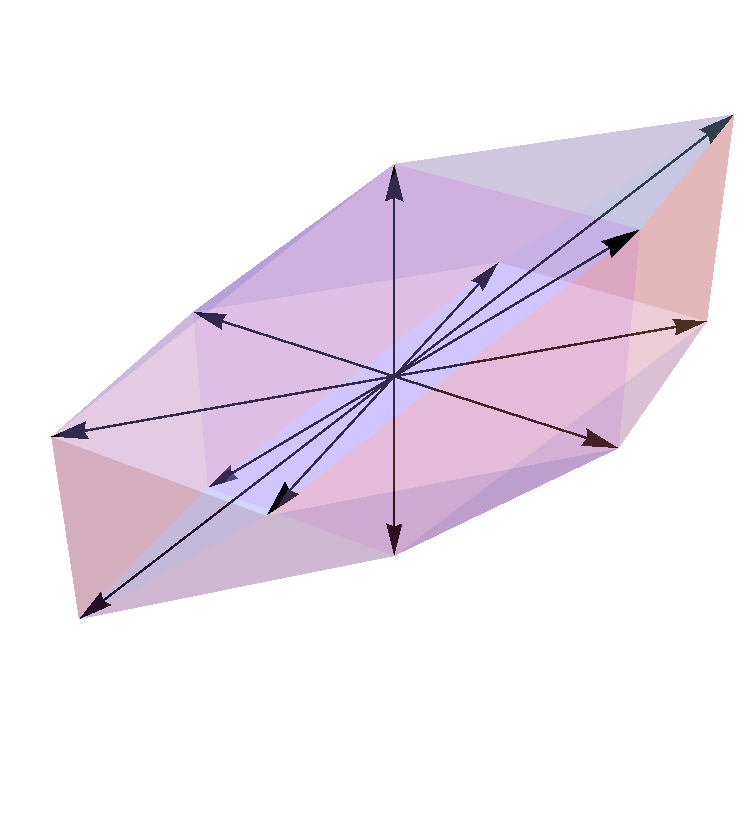}
\vspace{-0.5cm}
\\
{\raise 0.3cm \hbox{
\includegraphics[width=3.5cm]{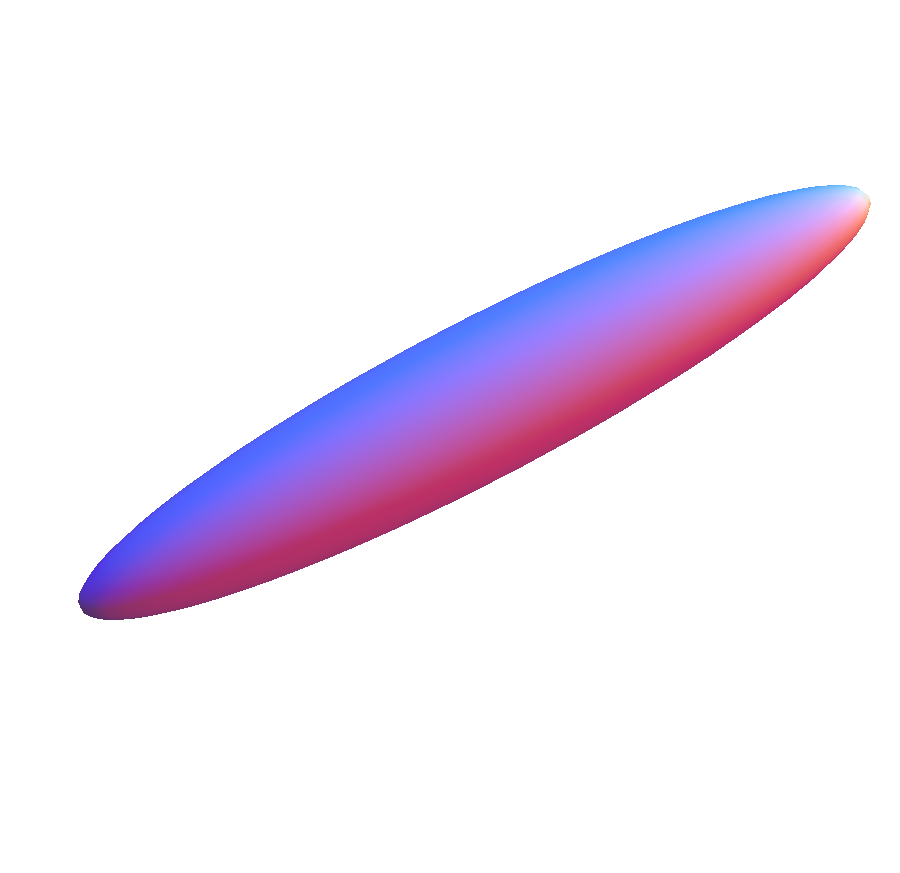}
}}
&
\hspace{-1.2cm}
\includegraphics[width=4.5cm]{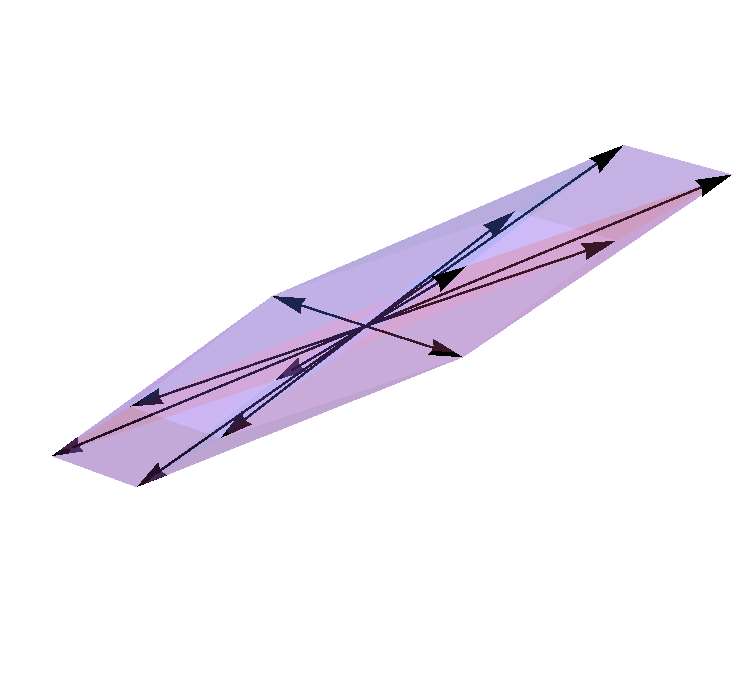}
\end{tabular}
\caption{
Right: stencil of the AD-LBR, for a symmetric matrix of eigenvector $M$ of anisotropy ratio $\kappa(M)$ equal to $2$ (top) or $6$ (bottom). The anisotropy is of ``needle'' type: the two largest eigenvalues of $M$ are equal, and the needle orientation is given by the vector $(4,2,3)$. The ellipsoid $\{z\in \R^3;\, \|z\| \leq 1\}$ is shown left.
}
\end{figure}

Algorithms for the construction of privileged bases of lattices, consisting of short and almost orthogonal vectors, have attracted an important research effort from the mathematical community, over a long period of time. The first such algorithm dates back to Lagrange \cite{L1773}, and is restricted to two dimensional lattices. 
Methods for high dimensional lattices, such as the LLL algorithm \cite{LLL82}, are of key importance for integer programming and cryptography \cite{NS01}.
AD-LBR is based on the original algorithm of Lagrange \cite{L1773}, and on its recent extension to three dimensional lattices \cite{S01,NS09}. These methods output a basis of $\Z^d$ reduced in the sense of Minkowski, which in dimension $d \leq 4$ is equivalent to the following definition.
%This algorithm was recently extended to three [??] and four [??] dimensional lattices.
%The high dimensional counterpart of th

%These methods do not output obtuse superbases however, as required by the AD-LBR, but lattices bases which are \emph{reduced} in an appropriate sense. %We shall use the following noting of reduction, named greedy reduction in \cite{NS09}.
%We shall use the following notion of reduction. 
%Minkowski reduction, which is equivalent to the following
%We shall use the notion of Minkowski reduction \cite{NS09}, which in dimension $d \leq 4$ is equivalent to the following definition.

We denote by $e_1\Z+ \cdots+e_k\Z$ the sub-lattice of $\Z^d$ generated by vectors $e_1, \cdots, e_k\in \Z^d$. This sub-lattice equals $\{0\}$ by convention if $k=0$.

\begin{definition}
\label{def:ReducedBasis}
An $M$-reduced basis of $\Z^d$, where $d\leq 4$ and $M \in S_d^+$, is a basis $(e_1, \cdots, e_d)$ of $\Z^d$ such that 
\begin{equation}
\label{BasisMinimality}
\|e_i\|_M  = \min \{ \|e\|_M; \, e \in \Z^d \sm (e_1 \Z+\cdots+e_{i-1} \Z)\}.
\end{equation}
\end{definition}

For each $d \leq 4$, and each $M\in S_d^+$, there exists at least one $M$-reduced basis \cite{NS09}. In contrast, there exists $M\in S_5^+$ for which no basis of $\Z^d$ satisfies \eqref{BasisMinimality}.
%In dimension $d\geq 5$, there exists matrices $M\in S_d^+$ for which no basis of $\Z^d$ satisfies \eqref{BasisMinimality} \cite{NS09}. Weaker notions of reduction, and more sophisticated basis reduction methods, are then needed.
The norms of the elements $(e_i)_{i=1}^d$ of an $M$-reduced basis,
\begin{equation}
\label{def:MinkowskiMinima}
\lambda_i(M) := \|e_i\|_M,
\end{equation}
are called the Minkowski minima, and are independent of the choice of $M$-reduced basis.
In particular, $e_1$ is the shortest vector of $\Z^d$, with respect to the norm $\|\cdot\|_M$, and $e_2$ is the shortest linearly independent vector. 

\begin{lemma}
\label{lem:MinkowskiBounds}
For any $M \in S_d^+$, $1 \leq i \leq d$,
\begin{equation*}
\|M^{-\frac 1 2} \|^{-\frac 1 2} \leq \lambda_i(M) \leq \|M\|^\frac 1 2.
\end{equation*}
\end{lemma}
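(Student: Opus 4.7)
My plan is to prove the two inequalities separately, each by a short direct argument from Definition \ref{def:ReducedBasis}, without appealing to Minkowski's theorems or any deeper lattice machinery.

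For the upper bound $\lambda_i(M)\leq\|M\|^{1/2}$, I would use the canonical basis $(f_1,\ldots,f_d)$ of $\Z^d$, for which $\|f_j\|_M^2 = \langle f_j, M f_j\rangle = M_{jj}\leq\|M\|$. The sub-lattice $L_{i-1}:=e_1\Z+\cdots+e_{i-1}\Z$ has $\R$-span of dimension $i-1<d$, so it cannot contain all of the $f_j$; pick $f_j$ outside $\mathrm{span}_\R(e_1,\ldots,e_{i-1})$. Since $(e_1,\ldots,e_d)$ is a $\Z$-basis of the full lattice $\Z^d$, every lattice element has a unique integer expansion in it, so a lattice point lies in $\mathrm{span}_\R(e_1,\ldots,e_{i-1})$ if and only if it lies in $L_{i-1}$. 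Thus $f_j\in\Z^d\setminus L_{i-1}$, and \eqref{BasisMinimality} gives $\lambda_i(M)=\|e_i\|_M\leq\|f_j\|_M\leq\|M\|^{1/2}$.

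For the lower bound, I would observe that $\lambda_i(M)\geq\lambda_1(M)=\min_{e\in\Z^d\setminus\{0\}}\|e\|_M$, and pick an arbitrary nonzero $e\in\Z^d$, for which $|e|\geq 1$. Writing $e=M^{-1/2}(M^{1/2}e)$ and using the operator-norm bound $|e|\leq\|M^{-1/2}\|\cdot|M^{1/2}e|=\|M^{-1/2}\|\cdot\|e\|_M$ yields $\|e\|_M\geq\|M^{-1/2}\|^{-1}$, which controls $\lambda_1(M)$ and therefore all $\lambda_i(M)$ from below by the same quantity.

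The only delicate point in the whole argument is the short lattice lemma that $\Z^d\cap\mathrm{span}_\R(e_1,\ldots,e_{i-1})=L_{i-1}$, which is a one-line consequence of the uniqueness of integer coordinates in the basis $(e_1,\ldots,e_d)$. Everything else reduces to the definition of $\lambda_i(M)$ via \eqref{BasisMinimality} and a single operator-norm inequality, so I expect no real obstacle; in particular, no use of the restriction $d\leq 4$ or of the existence result for $M$-reduced bases is needed beyond the hypothesis that such a basis is given.
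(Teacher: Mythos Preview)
Your proof is correct and follows essentially the same route as the paper's: bound $\|e\|_M$ above and below by $\|M\|^{1/2}|e|$ and $\|M^{-1/2}\|^{-1}|e|$, then use that any nonzero lattice vector has $|e|\geq 1$ for the lower bound and that some canonical basis vector lies outside $L_{i-1}$ for the upper bound. (Note that the exponent $-\tfrac12$ on $\|M^{-1/2}\|$ in the stated inequality appears to be a typo for $-1$; your argument, like the paper's own proof, actually yields the latter bound.)
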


\begin{proof}
Note that $\|M^{-\frac 1 2}\|^{-\frac 1 2} \|e\| \leq \|e\|_M \leq \|M\|^\frac 1 2 \|e\|$, for any $e \in \R^2$. In addition: (i) any $e \in \Z^2\sm \{0\}$ satisfies $\|e\| \geq 1$, and (ii)
the set $\Z^d \sm (e_1 \Z+ \cdots+e_{i-1} \Z)$ appearing in \eqref{BasisMinimality} always contains at least one element $e$ of the canonical basis of $\R^d$, so that $\|e\| \leq 1$. The announced result easily follows.
\QED
\end{proof}

%Observing that $\|M^{-\frac 1 2}\|^{-\frac 1 2} \|e\| \leq \|e\|_M \leq \|M\|^\frac 1 2 \|e\|$, for any $e \in \R^2$, one easily checks that 
%%One easily checks that 
%\begin{equation}
%\|M^{-\frac 1 2} \|^{-\frac 1 2} \leq \lambda_i(M) \leq \|M\|^\frac 1 2
%\end{equation}

We emphasize that obtaining an $M$-reduced basis, i.e.\ solving the minimization problems \iref{BasisMinimality}, is both simple and cheap numerically. In dimension $d=2$, this is the object of Lagrange's algorithm \cite{L1773} (later rediscovered by Gauss and often erroneously called Gauss's algorithm, see \cite{NS09}): initialize $(e,f)$ as the canonical 
 basis of $\Z^2$, and 
 \begin{align}
\label{GaussAlgo}
&\text{\bf Do } (e,\ f) := (f, \ e - \mathrm{Round}(\<e, M f\>/\|f\|_M^2 ) \,f),\\
\nonumber
&\text{\bf while }\|e\|_M > \|f\|_M.
\end{align}
This algorithm can be regarded as a two dimensional geometrical generalization of greatest common divisor computation. It can be extended to higher dimension and, in dimension up to four, outputs an $M$-reduced basis after at most $\cO(\ln \kappa(M))$ iterations \cite{NS09}, each consisting of $\cO(1)$ operations among reals. 
%(The complexity model used in the description \cite{NS09} of these methods, named bit-complexity, is slightly different and taylored for matrices $M$ with large integer entries. See the discussion in Theorem 1.5 of \cite{M12}.)

% and admits a natural extension to higher dimension \cite{NS09}. In dimension up to four, it outputs an $M$-reduced basis after at most $\cO(\ln \kappa(M))$ iterations, each consisting of $\cO(1)$ operations among reals. We refer to \cite{NS09} for the (definitely non-trivial) proof of this point, and to theorem 1.5 in \cite{M12} for a short discussion. 
%These latter two points are definitely non-trivial and were 
%The fact that this algorithm outputs a  
%This algorithm, and its generalizations to dimension $d\in \{3,4\}$, are described and proved in \cite{NS09}; see also the discussion in Theorem 1.5 of \cite{M12}.
%Their number of iterations is $\cO(\ln \kappa(D))$, logarithmic in the anisotropy ratio \eqref{def:KappaD} of the matrix $D$.

The elements of an $M$-reduced basis are heuristically never very far from being orthogonal, as illustrated by the following lemma.
\begin{lemma}
\label{lem:IneqScal}
Let $M \in S_d^+$, $d \leq 4$, and let $(e_1, \cdots, e_d)$ be an $M$-reduced basis. Then for any $i,j \in \{1, \cdots, d\}$,
\begin{equation}
\label{IneqScal}
2 |\<e_i , M e_j \>| \leq \|e_i\|_M^2.
\end{equation}
\end{lemma}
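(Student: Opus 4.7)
The plan is to exploit the minimality condition \eqref{BasisMinimality} applied to the vectors $e_j \pm e_i$. Without loss of generality assume $i < j$ (the case $i = j$ being trivial or excluded, and the case $i > j$ handled by symmetry of the scalar product together with the monotonicity of the Minkowski minima explained below). The key observation is that if $i < j$, then $e_i \in e_1\Z+\cdots+e_{j-1}\Z$, so
\begin{equation*}
e_j \pm e_i \ \notin \ e_1\Z+\cdots+e_{j-1}\Z,
\end{equation*}
since otherwise $e_j$ itself would lie in this sublattice, contradicting the definition of an $M$-reduced basis.

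Next I would apply \eqref{BasisMinimality} to obtain $\|e_j\|_M \leq \|e_j \pm e_i\|_M$, and expand the right-hand side via the identity
\begin{equation*}
\|e_j \pm e_i\|_M^2 = \|e_j\|_M^2 \pm 2\<e_i, M e_j\> + \|e_i\|_M^2.
\end{equation*}
Cancelling $\|e_j\|_M^2$ and choosing the sign appropriately yields $2|\<e_i, Me_j\>| \leq \|e_i\|_M^2$, which is exactly \eqref{IneqScal} in the case $i < j$.

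To handle the case $i > j$, I would first note the monotonicity of the Minkowski minima: applying \eqref{BasisMinimality} along the chain of nested sublattices gives $\lambda_1(M) \leq \lambda_2(M) \leq \cdots \leq \lambda_d(M)$, since the set over which one minimizes shrinks as $i$ grows. Then for $i > j$ the already proved case (with roles swapped) gives $2|\<e_j, Me_i\>| \leq \|e_j\|_M^2 \leq \|e_i\|_M^2$, which is \eqref{IneqScal}.

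The argument is essentially a one-line application of a greedy minimality property, so there is no substantial obstacle; the only point requiring a little care is the sublattice membership verification for $e_j \pm e_i$, and the observation that the Minkowski minima are monotone — both of which follow directly from the definition of an $M$-reduced basis.
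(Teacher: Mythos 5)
Your proof is correct and is essentially the same argument as the paper's: reduce to $i<j$ via the monotonicity $\lambda_1(M)\le\cdots\le\lambda_d(M)$, apply the minimality \eqref{BasisMinimality} to $e_j\pm e_i$, and expand the squares. You even helpfully verify the sublattice membership claim $e_j\pm e_i\notin e_1\Z+\cdots+e_{j-1}\Z$, which the paper leaves implicit.
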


\begin{proof}
Since $\|e_k\|_M$ is an increasing function of $k \in \{1,\cdots, d\}$, we may assume that $i<j$. If follows from \eqref{BasisMinimality} that $\|e_j\|_M \leq \|e_j+e_i\|_M$, and $\|e_j\|_M \leq \|e_j - e_i\|_M$. Squaring these inequalities, and developing the scalar products, we obtain the announced result.
\QED
\end{proof}

\begin{corollary}
\label{corol:ObtuseSuperBaseFromReducedBasis2}
Let $M \in S_2^+$, and let $(e,f)$ be an $M$-reduced basis such that $\<e,M f\> \leq 0$. Then $(e,f,g)$ is an $M$-obtuse superbase of $\Z^2$, with $g := -e-f$. In addition
\begin{equation}
\label{NegativeScalProd}
\<e, M g\> \leq -\|e\|_M^2/2, \quad \<f, M g\> \leq -\|f\|_M^2/2.
\end{equation}

\end{corollary}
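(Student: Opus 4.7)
The claim has three parts: (i) that $(e,f,g)$ is a superbase of $\Z^2$, (ii) the two quantitative inequalities in \eqref{NegativeScalProd}, and (iii) the $M$-obtuse property. Parts (i) and (iii) will follow essentially for free from (ii), so the plan is to focus on (ii), which in turn reduces to a single invocation of Lemma \ref{lem:IneqScal}.

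First I would dispose of (i): by hypothesis $(e,f)$ is a basis of $\Z^2$, and by construction $e+f+g=0$, which matches the definition of a superbase with $(e_0,e_1,e_2)=(g,e,f)$ (up to relabeling).

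Next, for (ii), I would expand $\<e,Mg\>$ using $g=-e-f$:
\begin{equation*}
\<e,Mg\> \;=\; -\|e\|_M^2 - \<e,Mf\>.
\end{equation*}
Applying Lemma \ref{lem:IneqScal} with the pair $(e,f)$ gives $2|\<e,Mf\>|\leq \|e\|_M^2$, hence in particular $-\<e,Mf\>\leq \|e\|_M^2/2$, which rearranges to the desired bound $\<e,Mg\>\leq -\|e\|_M^2/2$. For the symmetric estimate, I would write
\begin{equation*}
\<f,Mg\> \;=\; -\|f\|_M^2 - \<e,Mf\>,
\end{equation*}
and use that since $(e,f)$ is $M$-reduced we have $\|e\|_M\leq \|f\|_M$, so Lemma \ref{lem:IneqScal} yields $|\<e,Mf\>|\leq \|e\|_M^2/2 \leq \|f\|_M^2/2$, giving $\<f,Mg\>\leq -\|f\|_M^2/2$.

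Finally, for (iii), the $M$-obtuseness $\<e,Mg\>\leq 0$ and $\<f,Mg\>\leq 0$ are immediate corollaries of the inequalities just proved (the right-hand sides being nonpositive), and $\<e,Mf\>\leq 0$ is the standing hypothesis. There is no real obstacle here: the only subtle point is to notice that the hypothesis $\<e,Mf\>\leq 0$ allows one to drop the absolute value when applying Lemma \ref{lem:IneqScal}, which is exactly what makes the factor $1/2$ (rather than $1$) appear in \eqref{NegativeScalProd}.
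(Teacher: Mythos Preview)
Your proof is correct and follows essentially the same route as the paper's: expand the scalar products using $g=-e-f$ and invoke Lemma~\ref{lem:IneqScal}. One minor quibble: your closing remark is slightly off---the sign hypothesis $\<e,Mf\>\leq 0$ is \emph{not} needed for the inequalities \eqref{NegativeScalProd} (the bound $-\<e,Mf\>\leq|\<e,Mf\>|\leq\|e\|_M^2/2$ holds regardless of sign), it is used only to secure the remaining obtuse condition $\<e,Mf\>\leq 0$ in part (iii).
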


\begin{proof}
The previous Lemma implies
%\begin{equation*}
$
\<e, M (e+f)\> \geq \|e\|_M^2-|\<e,M f\>| \geq \frac 1 2 \|e\|_M^2.
$
% \end{equation*}
Likewise $\<f, M (e+f)\> \geq \frac 1 2 \|f\|_M^2$. The result follows. %, and finally $-\<e,M f\> \geq 0$. % by construction. 
\QED
\end{proof}
The practical construction of the two dimensional AD-LBR stencil at a point $z\in \Omega$ amounts to (i) compute an $\Met(z)$-reduced basis $(e,f)$ using Lagrange's algorithm \eqref{GaussAlgo}, (ii) replace $f$ with $-f$, if necessary, so that $\<e, M f\> \leq 0$, and (iii) define the stencil $V(z)$ and the weights $\gamma_z$ in terms of the $M$-obtuse superbase $(e,f,g)$ of $\Z^2$, where $g=-e-f$, as described in \eqref{def:Vz} and \eqref{def:GammaZ}.
%The construction of the two dimensional AD-LBR stencils thus essentially amounts to the two lines program \eqref{GaussAlgo}.

The rest of this section is devoted to the description of the three dimensional AD-LBR stencils.
%We next describe the stencils of the three dimensional AD-LBR. 
In constrast with the two dimensional case, the construction of the 3D stencil $V(z)$ at a point $z\in \Omega$, involves a $\Diff(z)$-obtuse basis, instead of an $\Met(z)$-obtuse basis.
\begin{proposition}
\label{prop:MakeObtuse}
Let $D\in S_3^+$, and let $(e_1,e_2,e_3)$ be a $D$-reduced basis. Let $b_i := \ve_i e_{\sigma(i)}$, for all $1\leq i \leq 3$, where the signs $\ve_1, \ve_2, \ve_3 \in \{-1,1\}$, and the permutation $\sigma$ of $\{1,2,3\}$ are chosen so that 
\begin{equation}
\label{SignedOrderedIneq}
|\<b_1, D b_2\>| \leq \min \{ - \<b_1, D b_3\>, \, -\<b_2, D b_3\>\}.
\end{equation}
Then the following is a $D$-obtuse superbase:
\begin{equation}
\label{3DSuperBase}
\left\{
\begin{array}{cl}
(b_1,b_2,b_3,-b_1-b_2-b_3) & \text{if } \<b_1, D b_2\> \leq 0,\\
(-b_1,b_2,b_1+b_3,-b_2-b_3) & \text{otherwise.}
\end{array}
\right.
\end{equation}
\end{proposition}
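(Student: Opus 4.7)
The plan is to verify, in each branch of \eqref{3DSuperBase}, that the four candidate vectors form a superbase of $\Z^3$ (they sum to zero, and three of them form a basis) and that each of the six pairs of distinct members has non-positive $D$-scalar product. The superbase property will reduce to a direct $3 \times 3$ integer-determinant computation. For the obtuseness inequalities, my workhorse estimate will be
\[|\langle b_i, D b_j\rangle| \leq \tfrac{1}{2} \min(\|b_i\|_D, \|b_j\|_D)^2 \quad \text{for distinct } i,j \in \{1,2,3\},\]
which I will derive from Lemma~\ref{lem:IneqScal} applied to the $D$-reduced basis $(e_1,e_2,e_3)$, using that $(b_1,b_2,b_3)$ is a signed permutation of $(e_1,e_2,e_3)$ together with the monotonicity of the Minkowski minima $\lambda_i$. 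Note that $(b_1,b_2,b_3)$ itself need not be reduced, which is why I pass through the $e_i$'s.

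In the first branch, where $\langle b_1, D b_2\rangle \leq 0$, the three pairs among $(b_1,b_2,b_3)$ are non-positive by the hypothesis~\eqref{SignedOrderedIneq} combined with the branch assumption. For the three remaining pairs involving $b_0 := -b_1 - b_2 - b_3$, I will expand
\[\langle b_i, D b_0\rangle = -\|b_i\|_D^2 + |\langle b_i, D b_j\rangle| + |\langle b_i, D b_k\rangle|, \quad \{i,j,k\}=\{1,2,3\},\]
and bound each absolute value by $\tfrac{1}{2}\|b_i\|_D^2$ via the workhorse estimate; the total is then $\leq 0$.

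The second branch demands more bookkeeping. Setting $p := \langle b_1, D b_2\rangle > 0$, $q := -\langle b_1, D b_3\rangle$ and $r := -\langle b_2, D b_3\rangle$, one has $0 < p \leq \min(q,r)$ by \eqref{SignedOrderedIneq}. I will then compute each of the six $D$-scalar products among $(-b_1, b_2, b_1 + b_3, -b_2 - b_3)$ directly; five of them simplify to non-positive quantities of the form $-p$, $p-q$, $p-r$, $-\|b_1\|_D^2 + q$, or $-\|b_2\|_D^2 + r$, whose non-positivity is immediate from \eqref{SignedOrderedIneq} or from the workhorse estimate. The main obstacle is the remaining check
\[\langle b_1+b_3, D(-b_2-b_3)\rangle = -\|b_3\|_D^2 + q + r - p,\]
whose sign is not visibly determined. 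It will be handled by applying the workhorse estimate to the pairs $(b_1,b_3)$ and $(b_2,b_3)$, yielding $q, r \leq \tfrac{1}{2}\|b_3\|_D^2$; since $p \geq 0$, one concludes $q + r - p \leq q + r \leq \|b_3\|_D^2$, closing the argument.
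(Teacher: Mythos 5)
Your proposal is correct and follows essentially the same route as the paper: both reduce the obtuseness checks to the inequality $2|\langle b_i, D b_j\rangle| \leq \min(\|b_i\|_D^2, \|b_j\|_D^2)$ coming from Lemma~\ref{lem:IneqScal}, apply it pairwise to the three ``easy'' products involving a lone $b_i$, and use the double application to $(b_1,b_3)$ and $(b_2,b_3)$ (plus the sign of $\langle b_1, D b_2\rangle$) for the remaining product $\langle b_1+b_3, D(b_2+b_3)\rangle$. Your observation that $(b_1,b_2,b_3)$ need not itself be $D$-reduced, so the estimate must be transported from $(e_1,e_2,e_3)$ via the signed permutation, is a point the paper leaves implicit and is worth stating.
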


\begin{proof}
To achieve \eqref{SignedOrderedIneq}, one can choose $\sigma$ such that $b'_i := e_{\sigma(i)}$ satisfies
$|\<b'_1, D b'_2\>| \leq |\<b'_1, D b'_3\>| \leq |\<b'_2, D b'_3\>|$. Then choose the signs $(\ve_i)_{i=1}^3$ such that $b_i := \ve_i b'_i$ satisfies $\<b_1, D b_3 \> \leq 0$ and $\<b_2, D b_3 \> \leq 0$.

%It is clear that \eqref{2DSuperBase} and \eqref{3DSuperBase} are superbases. 
The two families of vectors appearing in \eqref{3DSuperBase} are clearly superbases. 
We thus only need to show that they are $D$-obtuse; in other words that $\<e,D f\> \leq 0$ for any two distinct elements $e,f$ of these families. Note that for all distinct $i,j\in \{1,2,3\}$, using \eqref{IneqScal},
\begin{equation*}
2 |\<b_i, D b_j\>| \leq \|b_i\|_D^2.
\end{equation*}

%It follows from \eqref{IneqScal} that $2 |\<b_i, D b_j\>| \leq \|b_i\|_D^2$ for all $i,j\in \{1,2,3\}$. 
In the case where $\<b_1,D b_2\> \leq 0$, the pairwise scalar products between $b_1, b_2, b_3$ are non-positive by construction. In addition 
%\begin{align*}
%-2 \<-b_1-b_2-b_3, D b_1\> \geq & \phantom{+} (\|b_1\|_D^2 - 2 |\<b_1, D b_2\>|)\\
%& + (\|b_1\|^2_D - 2 |\<b_1, D b_3\>|)\\
%\geq & 0.
%\end{align*}
\begin{align*}
%&-2 \<-b_1-b_2-b_3, D b_1\> \\
& 2 \<b_1+b_2+b_3, D b_1\> \\
\geq & (\|b_1\|_D^2 - 2 |\<b_1, D b_2\>|)
 + (\|b_1\|^2_D - 2 |\<b_1, D b_3\>|) \geq 0.
\end{align*}
Likewise $ \<b_1+b_2+b_3, D b_i\> \geq 0$ for all $i \in \{1,2,3\}$, which concludes the proof.

We next turn to the second case, where $\<b_1,D b_2\> \geq 0$. %and last 
Enumerating all scalar products we obtain
%(*Missing $D$*)
\begin{align*}
%\< b_1, D b_2\> & \geq 0,\\
 \<  b_1, D(b_1+b_3)\> &\geq \|b_1\|_D^2 - |\<b_1,D b_3\>| \geq 0,\\
 \<  b_1,D(-b_2-b_3)\> &= -\<b_1, Db_2\> - \<b_1, Db_3\> \geq 0,\\
- \<  b_2, D(b_1+b_3)\> &= -\<b_2,Db_1\> - \<b_2,Db_3\> \geq 0,\\
 \<  b_2, D(b_2+b_3)\> &\geq \|b_2\|^2_D - |\<b_2,D b_3\>| \geq 0,
\end{align*}
and finally
\begin{align*}
&2\<b_1+b_3,D(b_2+b_3)\> \geq
 2 \<b_1,Db_2\> \\
 &+ (\|b_3\|^2 - 2 |\<b_1,Db_3\>|)  + (\|b_3\|^2 - 2 |\<b_2,Db_3\>|) \geq 0.
\end{align*}
%\begin{align*}
%- \<-b_1,D b_2\> & = \<b_1, D b_2\> \geq 0.\\
%- \<- D  b_1, b_1+b_3\> &\geq (\|b_1\|^2 - |\<b_1,b_3\>|) \geq 0.\\
%- \<- D b_1,-b_2-b_2\> &= -\<b_1,b_2\> - \<b_1,b_3\> \geq 0.\\
%- \<b_2,b_1+b_3\> &= -\<b_2,b_1\> - \<b_2,b_3\> \geq 0.\\
%- \<b_2,-b_2-b_3\> &\geq (\|b_2\|^2_D - |\<b_2,b_3\>|) \geq 0.\\
%- 2\<b_1+b_3,-b_2-b_3\> &\geq 2 \<b_1,b_2\> + (\|b_3\|^2 - 2 |\<b_1,b_3\>|) \\
%& \phantom{\geq}+ (\|b_3\|^2 - 2 |\<b_2,b_3\>|) \geq 0.
%\end{align*}
This concludes the proof. 
\QED
\end{proof}

In view of the previous Proposition, obtaining a $D$-obtuse superbase of $\Z^3$ has numerical cost $\cO(\ln \kappa(D))$. Indeed a $D$-reduced basis needs to be computed in a preliminary step, after what Proposition \ref{prop:MakeObtuse} is applied for a negligible $\cO(1)$ cost. An alternative method for the construction of $D$-obtuse superbases of $\Z^3$ is presented in \cite{CS92} and in appendix B of \cite{BK10}, but its numerical complexity is not known to the authors.
%The advantage of the construction of 
%Proceeding as in Proposition \ref{prop:MakeObtuse} do
%Proposition \ref{prop:MakeObtuse} is low and guaranteed  numerical its guaranteed $\cO(\ln \kappa(D)$ complexity, 
%An algorithm which directly produces $D$-obtuse superbases of $\Z^3$ is presented in \cite{CS92}, and in appendix B of \cite{BK10}. The construction of Proposition \ref{prop:MakeObtuse} has the advantage of a guaranteed $\cO(\ln \kappa(D) )$ worst case complexity, including the preliminary computation of a $D$-reduced basis.
% The above  without complexity estimates
%The following proposition, on the construction of a $D$-obtuse basis of $\Z^d$, should be interpreted as follows: the algorithm presented in \cite{CS92}, and Appendix B of \cite{BK10}, for that purpose precisely, converges in a single step when initialized with a suitably signed and permuted $D$-reduced basis.

The three dimensional AD-LBR is defined by the following stencils and coefficients. %weights. %$V(z)$, and weights $\gamma_z$, at each point $z\in \Omega$. Let 
Let $z\in \Omega$, let
$D := \Diff(z)$, and let $(e_i)_{i=0}^3$ be a $D$-obtuse superbase of $\Z^3$. 
%, obtained as described in the previous proposition for a cost $\cO(\ln \kappa(D))$.
We set 
\begin{equation*}
V(z) := \{ e_k \times e_l; \ k,l \in \{0, 1,2, 3\}, \ k \neq l\},
\end{equation*}
and if $\{i,j,k,l\} = \{0,1,2,3\}$, $i\neq j$ and $k \neq l$, then
\begin{equation*}
\gamma_z(e_k \times e_l) := -\frac 1 2 \<e_i, D e_j\>.
\end{equation*}
As announced, $\#(V(z)) = 12$, and the weights $\gamma_z$ are non-negative. The proof of the scheme consistency \eqref{DSum}, due to Selling \cite{S1874}, is reproduced in the next lemma for completeness. 
A generalization, appearing in Appendix B of \cite{BK10}, allows in arbitrary dimension to build a non-negative decomposition of the form \eqref{DSum3D} from a $D$-obtuse superbase of $\Z^d$. However the non existence of such a superbase, for some matrices $D \in S_4^+$, forbids a straightforward extension of AD-LBR to higher dimension. % (at least in a straightforward manner).

\begin{lemma}[Selling \cite{S1874}]
Let $(e_i)_{i=0}^3$ be a superbase of $\Z^3$. %{i=1}^4
%Let $e_0, \cdots , e_3 \in \R^d$ be such that  
For all $i,j,k,l$ such that $\{i,j,k,l\} = \{0,1,2,3\}$, $i<j$, and $k<l$, let 
$c_{ij} := e_k\times e_l$. Then, for any $D \in S_3^+$:
\begin{equation}
\label{DSum3D}
D = -\sum_{0 \leq i<j \leq 3} \<e_i, D e_j\> c_{ij} c_{ij}^\trans. 
\end{equation}
\end{lemma}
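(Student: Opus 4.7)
The strategy is to verify the matrix identity by testing the bilinear form on pairs drawn from the superbase itself. Let $D'$ denote the right-hand side of \eqref{DSum3D}. I plan to show $\<e_a, D' e_b\> = \<e_a, D e_b\>$ for all $a, b \in \{0,1,2,3\}$. Since $(e_1, e_2, e_3)$ is a basis of $\R^3$ and both $D$, $D'$ are symmetric, this suffices to conclude $D = D'$.

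Expanding the bilinear form,
\begin{equation*}
\<e_a, D' e_b\> = -\sum_{i<j} \<e_i, D e_j\> \<c_{ij}, e_a\> \<c_{ij}, e_b\>,
\end{equation*}
and the key observation is that $\<c_{ij}, e_a\> = \det(e_k, e_l, e_a)$ where $\{i,j,k,l\} = \{0,1,2,3\}$. This vanishes whenever $a \in \{k,l\}$, and equals $\pm 1$ otherwise. The latter claim follows from $|\det(e_1,e_2,e_3)| = 1$ together with the superbase relation $e_0 + e_1 + e_2 + e_3 = 0$, which allows any occurrence of $e_0$ inside a determinant to be replaced by a single $-e_m$ for $m \in \{1,2,3\}$.

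In the off-diagonal case $a \neq b$, the only surviving term is $\{i,j\} = \{a,b\}$. Using $e_a + e_b = -(e_k + e_l)$, one gets $\det(e_k, e_l, e_b) = -\det(e_k, e_l, e_a)$, hence $\<c_{ab}, e_a\>\<c_{ab}, e_b\> = -1$, which yields $\<e_a, D' e_b\> = \<e_a, D e_b\>$. In the diagonal case $a = b$, the surviving terms are those with $a \in \{i,j\}$, and each $\<c_{ij}, e_a\>^2 = 1$; using the symmetry of $D$ and the superbase relation,
\begin{equation*}
\<e_a, D' e_a\> = -\sum_{c \neq a} \<e_a, D e_c\> = -\bigl\<e_a, D\bigl(-e_a\bigr)\bigr\> = \<e_a, D e_a\>.
\end{equation*}

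I anticipate no substantial obstacle: the proof is pure sign bookkeeping made clean by the symmetric role the superbase relation forces upon $e_0, e_1, e_2, e_3$. The only point requiring mild care is verifying that $|\det(e_i, e_j, e_k)| = 1$ for every three-element subset of $\{0,1,2,3\}$, but this is immediate from the substitution $e_0 = -e_1 - e_2 - e_3$ whenever $0$ appears among the indices.
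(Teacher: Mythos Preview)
Your proof is correct and follows essentially the same approach as the paper's: both compute the inner products $\<c_{ij}, e_a\>$ via the determinant, use the superbase relation to establish the sign pattern $\<c_{ij}, e_i\> = -\<c_{ij}, e_j\> \in \{-1,1\}$ and $\<c_{ij}, e_k\> = \<c_{ij}, e_l\> = 0$, and then verify $\<e_a, D' e_b\> = \<e_a, D e_b\>$ by splitting into the diagonal and off-diagonal cases. The organization is slightly different, but the argument is the same.
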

\begin{proof}
%The proof, which appears in [??], is reproduced here for completeness.
Let $i,j,k,l$ be as in the definition of $c_{ij}$. Then
\begin{equation*}
\<e_i, c_{ij}\> = \<e_i,e_k \times e_l\> = \det(e_i,e_k,e_l) \in \{-1,1\},
\end{equation*}
since $(e_i,e_k,e_l)$ is a basis of $\Z^3$. Also
\begin{align*}
\<e_j, c_{ij}\> &= \<-e_i-e_k-e_l, \, e_k \times e_l\>\\
 &= - \<e_i, \ e_k \times e_l\> = -\<e_i, c_{ij}\>.
\end{align*}
In addition, clearly, $\<e_k, c_{ij}\> = \<e_l, c_{ij}\> = 0.$
%We have $|\<e_i, e_k\times e_l \>| = 1$ if 
Denoting by $D'$ the right hand side of \eqref{DSum3D}, we obtain as a result
\begin{align*}
\<e_0, D' e_0\> &= - \<e_0, D e_1\> - \<e_0, D e_2\> - \<e_0, D e_3\> \\
&=  \<e_0,D(- e_1-e_2-e_3)\> =  \<e_0, D e_0\>.\\
\<e_0, D' e_1\> &= - \<e_0, D e_1\> \<e_0, c_{01}\> \<e_1, c_{01}\> = \<e_0, D e_1\>.
\end{align*}
Likewise $\<e_i, D' e_j\> = \<e_i, D e_j\>$ for all $i,j \in \{1,2,3,4\}$. It follows as announced that $D = D'$.
\QED
\end{proof}

\section{Equivalence to a finite element discretization}
\label{sec:FE}

This section is devoted to the proof of Theorem \ref{th:FE}: the asymptotic equivalence of AD-LBR with a finite element discretization on an Anisotropic Delaunay Triangulation (ADT). We use the notations of \S \ref{sec:intro}. 
The existence of the ADT $\cT_h$ is established in the first subsection, for $h$ sufficiently small, as well as a few of its properties. 
The second subsection is devoted to the study of $M$-reduced bases. Theorem \ref{th:FE} is proved in the third subsection, by comparing the stencils of the AD-LBR and of the finite element discretization. 

We denote by $\kappaD$ the maximum anisotropy ratio \eqref{def:Kappa} of the diffusion tensor 
\begin{equation}
\label{def:KappaD}
\kappaD := \max_{z \in \Omega} \kappa(\Diff(z)).
\end{equation}
Observing that $\kappa(\Diff(z)) = \kappa(\Met(z))$, and recalling that $\det(\Met(z))=1$, one easily checks that 
\begin{equation}
\label{KMK}
\kappaD^{-\frac 1 2} \|e\| \leq \|e\|_{\Met(z)} \leq \kappaD^\frac 1 2 \|e\|,
\end{equation}
for all $z\in \Omega$ and all $e \in \R^2$.

\subsection{Existence of an ADT} %Anisotropic Delaunay Triangulation}
\label{subsec:ExistsADT}

Our first lemma provides an uniform bound on the size of the Voronoi regions, see Figure \ref{fig:Del}, involved in the construction of the ADT. %in terms of the metric anisotropy.

\begin{lemma}
\label{lem:Diam}
%\begin{itemize}
\begin{enumerate}[(i)]
\item For all $r \in \R^2$, one has $\Delta_h(r) \leq \kappaD^\frac 1 2  h$.
\item If $p,q\in h \Z^2$, and $r \in \Vor_h(p) \cap \Vor_h(q)$, then ${\|p-r\| \leq \kappaD h}$ and $\|p-q\| \leq 2 \kappaD h$. 
%\end{itemize}
\end{enumerate}
\end{lemma}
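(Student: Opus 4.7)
\textbf{Proof plan for Lemma \ref{lem:Diam}.} The plan is to reduce everything to the anisotropy bound \eqref{KMK}, which translates between Euclidean and Riemannian distances up to the uniform factor $\kappaD^{1/2}$, combined with the elementary observation that the grid $h\Z^2$ has covering radius (in the $\ell^\infty$ sense) equal to $h/2$.

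For part (i), I would proceed as follows. Given $r \in \R^2$, first pick the nearest grid point $p \in h\Z^2$ (componentwise rounding), so that $\|r-p\|_\infty \leq h/2$, and in particular $\|r-p\| \leq h\sqrt{2}/2 \leq h$. Applying the right-hand inequality of \eqref{KMK} at $z = p$ gives $\delta_p(r) = \|r-p\|_{\Met(p)} \leq \kappaD^{1/2} \|r-p\| \leq \kappaD^{1/2} h$. Since $\Delta_h(r)$ is the minimum of $\delta_{p'}(r)$ over all grid points $p' \in h\Z^2$, this particular choice yields $\Delta_h(r) \leq \kappaD^{1/2} h$.

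For part (ii), I would start from the definition of $\Vor_h$: if $r \in \Vor_h(p) \cap \Vor_h(q)$ then $\delta_p(r) = \Delta_h(r)$ and $\delta_q(r) = \Delta_h(r)$. By part (i), both quantities are bounded by $\kappaD^{1/2} h$. Now apply the \emph{left-hand} inequality of \eqref{KMK} (at $z=p$, respectively $z=q$) to convert back to Euclidean norm:
\begin{equation*}
\|p-r\| \leq \kappaD^{1/2}\, \|p-r\|_{\Met(p)} = \kappaD^{1/2}\, \delta_p(r) \leq \kappaD\, h,
\end{equation*}
and the same bound holds for $\|q-r\|$. The triangle inequality in the Euclidean norm then yields $\|p-q\| \leq \|p-r\|+\|r-q\| \leq 2\kappaD h$, completing the proof.

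I do not expect any real obstacle here: the statement is a straightforward consequence of bounding one norm by the other with the constant $\kappaD^{1/2}$, together with the fact that any point of the plane lies at Euclidean distance at most $h$ from some vertex of $h\Z^2$. The only subtle point is applying \eqref{KMK} at the \emph{right} base point $z$ in each estimate (at $p$ for $\delta_p$, at $q$ for $\delta_q$), since the metric $\Met$ depends on the base point.
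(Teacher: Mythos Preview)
Your proof is correct and follows essentially the same route as the paper's: round to the nearest grid point and use the right-hand side of \eqref{KMK} for part (i), then use the Voronoi condition $\delta_p(r)=\Delta_h(r)$ together with the left-hand side of \eqref{KMK} and the triangle inequality for part (ii). Your version is slightly more explicit about the $\ell^\infty$ covering radius and about where the base point of \eqref{KMK} is taken, but there is no substantive difference.
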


\begin{proof}
Point (i). Rounding the coordinates of $r$ to a nearest multiple of $h$, we obtain a point $p \in h\Z^2$ such that $\|p-r\| \leq h$. Recalling \eqref{KMK} we obtain $\delta_p(r) \leq \kappaD^\frac 1 2 h$, and therefore $\Delta_h(r) \leq \kappaD^\frac 1 2 h$ in view of \eqref{def:Delta}. 

Point (ii).  We have $\kappaD^{-\frac 1 2 } \|p-r\|  \leq \delta_p(r) = \Delta_h(r) \leq \kappaD^\frac 1 2 h$. Thus $\|p-r\| \leq \kappaD h$, and likewise $\|q-r\| \leq \kappaD h$. 
Finally, by the triangle inequality, $\|p-q\| \leq \|p-r\|+\|q-r\| \leq 2 \kappaD h$. % $\|p-q\| \leq \|p-r\|+\|r-q\| \leq  (\delta_p(r)+ \delta_q(r))/\nu_1$. On the other hand $\delta_p(r) = \delta_q(r) = \Delta_h(r) \leq \nu_2 h$. %Combining these estimates we bound $\|p-q\|$. %, and using \iref{deltaLambdaMu} we bound $\delta_p(q)$.
\QED\end{proof}

%We introduce a distance $\dist$ on the collection of symmetric positive definite matrices: for all $M, N \in S_2^+$
%\begin{equation*}
%\dist(M,N) := \max_{\|e\|=1} |\ln \|e\|_M - \ln \|e\|_N|.
%\end{equation*}
Following the notations of \cite{LS03}, we denote by $\tau(p,q)$, $p,q \in \R^2$, the smallest constant $\tau \geq 1$ such that 
\begin{equation*}
\tau^{-1} \delta_p(r) \leq \delta_q(r) \leq \tau \delta_p(r), \quad \text{for all } r \in \R^2.
\end{equation*}
Equivalently, in the sense of symmetric matrices, 
\begin{equation}
\label{def:tauM}
\tau^{-2} \Met(p) \leq \Met(q) \leq \tau^2 \Met(p).
\end{equation}
%define 
%\begin{equation*}
%\tau(p,q) := \exp \dist(\Met(p),\Met(q)),
%\end{equation*} 
%for all $p,q\in \R^2$. Hence for any $r\in \R^2$
%\begin{equation*}
%\delta_p(r)/\tau(p,q) \leq \delta_q(r) \leq \delta_p(r) \tau(p,q).
%\end{equation*}
We also define a quantity $\tau_h\geq 1$, closely related to the modulus of continuity of the metric $\Met$:
\begin{equation}
\label{def:TauH}
\tau_h := \max\{ \tau(p,q); \, \|p-q\| \leq 2 \kappaD h\}. %p,q\in \R^2, \,
\end{equation}
One has $\tau_h \to 1$ as $h\to 0$, for any continuous metric $\Met$ (indeed $\Met$ is periodic and therefore uniformly continuous). If $\Met$ is Lipschitz, as assumed in Theorem \ref{th:FE}, then $\tau_h = 1+\cO(h)$.

We show in the next lemma the existence of an ADT, by applying the main result of \cite{LS03}, under the assumption that $\tau_h$ is sufficiently small. 
More precisely, we assume in the rest of this subsection that %$h$ is sufficiently small so that
\begin{equation}
\label{assumHADT}
\tau_h < \sqrt{1+\kappaD^{-2}}.
\end{equation}
%Under this condition, we apply in the next lemma the main result of \cite{LS03} to show the existence of an ADT.
\begin{lemma} 
\label{lem:Shew}
\begin{enumerate}[(i)]
\item If $p,q\in h\Z^2$, $p\neq q$, and $r\in \Vor_h(p) \cap \Vor_h(q)$, then 
$
\delta_p(r) < \delta_p(q)/\sqrt{\tau(p,q)^2-1}.
$
\item The geometric dual $\cQ_h$ of the Voronoi diagram is, as announced in \S \ref{sec:intro}, a polygonization of $\R^2$ into strictly convex polygons, with vertices $h \Z^2$. 
\end{enumerate}
\end{lemma}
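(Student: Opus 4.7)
For part (i), the strategy is to exploit the hypothesis \eqref{assumHADT} together with Lemma~\ref{lem:Diam}, reducing the claim to a straightforward size comparison rather than fine analysis of the bisector conic. First I would observe that since $r \in \Vor_h(p) \cap \Vor_h(q)$, both $\delta_p(r)$ and $\delta_q(r)$ equal $\Delta_h(r)$ by the definition \eqref{def:Vor}, so by Lemma~\ref{lem:Diam}(i) we immediately obtain the upper bound $\delta_p(r) \le \kappaD^{1/2}\, h$. Next, since $p \neq q$ are distinct points of $h\Z^2$, one has $\|p-q\| \geq h$, and the left inequality of \eqref{KMK} yields $\delta_p(q) = \|q-p\|_{\Met(p)} \geq \kappaD^{-1/2}\, h$.

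The remaining step combines these two bounds with the hypothesis. By Lemma~\ref{lem:Diam}(ii), $\|p-q\| \leq 2\kappaD\, h$, so by the definition \eqref{def:TauH} of $\tau_h$, we have $\tau(p,q) \leq \tau_h$. The assumption \eqref{assumHADT} then gives $\tau(p,q)^2 - 1 \leq \tau_h^2 - 1 < \kappaD^{-2}$, with strict inequality inherited from \eqref{assumHADT}. Consequently $1/\sqrt{\tau(p,q)^2 - 1} > \kappaD$, and stringing everything together:
\begin{equation*}
\frac{\delta_p(q)}{\sqrt{\tau(p,q)^2-1}} > \kappaD\, \delta_p(q) \geq \kappaD \cdot \kappaD^{-1/2}\, h = \kappaD^{1/2}\, h \geq \delta_p(r),
\end{equation*}
which is the desired strict inequality.

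For part (ii), I plan to invoke the Dual Triangulation Theorem of Labelle and Shewchuk \cite{LS03}: their hypothesis, ensuring that the geometric dual of an anisotropic Voronoi diagram is a valid polygonization into strictly convex cells, is a bi-Lipschitz separation condition between neighboring sites. This condition is precisely what part (i) establishes for every pair of distinct grid points whose Voronoi cells share a point $r$. Since our sites form the lattice $h\Z^2$ and the metric $\Met$ is periodic, the LS03 conclusion transfers directly: each dual cell $T_q$ is a strictly convex polygon with vertices in $h\Z^2$, and the collection $\cQ_h$ tiles $\R^2$. The main obstacle I anticipate is purely bookkeeping: matching our notation and normalization to that of \cite{LS03}, and verifying that the periodic setting is admissible (which it is, because the ADT construction and the LS03 hypothesis are local, involving only Voronoi neighbors, whose separation is controlled uniformly by Lemma~\ref{lem:Diam}).
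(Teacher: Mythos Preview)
Your argument for both parts is correct and follows essentially the same route as the paper's proof: bounding $\delta_p(r)$, $\delta_p(q)$, and $\tau(p,q)$ via Lemma~\ref{lem:Diam} and \eqref{assumHADT} for (i), then invoking the Dual Triangulation Theorem of \cite{LS03} (with its ``wedged'' criterion, implied by (i)) for (ii). The only omission is the degenerate case $\tau(p,q)=1$, where your displayed chain involves $1/\sqrt{0}$; the paper dispatches this in one line by noting the right-hand side is then infinite, so there is nothing to prove.
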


\begin{proof}
Point (i). We may assume that $\tau(p,q)>1$, otherwise there is nothing to prove. Point (ii) of Lemma \ref{lem:Diam} implies that $\|p-q\| \leq 2 \kappaD h$, thus 
\begin{equation*}
\sqrt{\tau(p,q)^2-1} \leq \sqrt{\tau_h^2-1} < \kappaD^{-1}. %= \nu_1/\nu_2
\end{equation*} 
On the other hand $\delta_p(q) \geq \kappaD^{-\frac 1 2} \|q-p\| \geq \kappaD^{-\frac 1 2} h$, and $\delta_p(r) \leq \Delta_h(r) \leq \kappaD^\frac 1 2 h$. The announced inequality follows.

Point (ii). We apply Theorem 7 (Dual Triangulation Theorem) in \cite{LS03}. Since the domain $\R^2$ has no boundary, it suffices to check that all the Voronoi arcs and vertices are \emph{wedged}, see \cite{LS03}. This condition means that for any $p,q \in h \Z^2$ such that $p \neq q$, and any $r \in \Vor_h(p)\cap\Vor_h(q)$, one has $(r-q) \Met(q) (p-q) >0$, and likewise exchanging the roles of $p$ and $q$. Heuristically, it expresses the acuteness of some angles measured in the local metric. Lemma 5 in \cite{LS03} shows that this condition follows from point (i) of this lemma, which concludes the proof.
\QED\end{proof} %\iref{pqrGamma}

We recall that $\cT_h$ is the triangulation obtained by arbitrarily triangulating the polygonization $Q_h$ of the previous lemma, respecting periodicity, see Definition \ref{def:ADT}. Generically $Q_h$ is already a triangulation, hence $\cT_h = Q_h$, see \S 1.  %The next lemma establishes a few properties of the elements of $\cT_h$. 
%We establish in the next lemma a few properties of the elements of $\cT_h$. 
The Voronoi regions $\Vor_h$, and the triangulation $\cT_h$, are illustrated in Figures \ref{fig:Vor} and \ref{fig:Del}. 

The next lemma provides estimates of the diameter, the area, and the angles of the elements of $\cT_h$.
These geometrical properties also have an interpretation in the context of lattices: (ii) shows that the edges of any triangle $T \in \cT_h$ define a superbase $(e,f,g)$ of $\Z^2$, and (iii) that this superbase is \emph{almost} $\Met(z)$-obtuse, for any $z\in T$. % which asymptotically is \emph{almost} obtuse (iii) with respect to the local metric $\Met$.

%Property (iii) shows that these triangles are \emph{asymptotically} acute, with respect to geometry locally defined by the Riemannian metric $\Met$. Acute triangulations are known to produce non-negative finite element discretizations \cite{H10}; this is consistent with the fact that the ADT discretization \eqref{def:EFE} is asymptotically equivalent to the non-negative AD-LBR discretization.

Note that the vertices $p,q,r$ of any triangle $T \in \cT_h$ satisfy by construction 
\begin{equation}
\label{IntersectVor}
\Vor_h(p) \cap \Vor_h(q) \cap \Vor_h(r) \neq \emptyset.
\end{equation}

\begin{lemma}
\label{lem:DescribeT}
%Consider a triangle $T \in \cT_h$, where $h$ is such that $\tau_h \leq \sqrt{1+\nudiv^{-2}}$. 
Denote by $h e, h f, h g$ the edges of a triangle $T\in \cT_h$, where $e,f,g \in \Z^2$ are oriented so that $e+f+g=0$.
Then 
%\begin{description}
%\item[Diameter:] $\max \{\|e\|, \|f\|, \|g\| \} \leq 2 \nudiv$.
%\item[Area:] $|\det(e,f)| = 1$, thus $|T| = h^2/2$.
%\item[Acuteness:] $\<e,\Met(z) f\> \leq \theta_h$, for any $z\in T$, where $ \theta_h := \nu_2^2(3+9 \tau_{2h}^2) (\tau_{2h}^2-1)$. %:= 4 \nudiv^2 (\tau_h^2-1)
%\end{description}
%\begin{itemize}
\begin{enumerate}[(i)]
%\item[\textbullet] 
\item
%(Diameter) 
$\max \{\|e\|, \|f\|, \|g\| \} \leq 2 \kappaD$.
%\item[\textbullet] 
\item
%(Area) 
$|\det(e,f)| = 1$, thus $|T| = h^2/2$.
%\item[\textbullet] 
\item
%(Acuteness) 
$\<e,\Met(z) f\> \leq \theta_h$, for any $z\in T$, where $\theta_h \to 0$ as $h \to 0$. (Explicitly: $ \theta_h = \kappaD (3+9 \tau_{2h}^2) (\tau_{2h}^2-1)$) %:= 4 \nudiv^2 (\tau_h^2-1)
%\end{itemize}
\end{enumerate}
\end{lemma}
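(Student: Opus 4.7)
The plan is to treat the three assertions separately, with (iii) being by far the most delicate. The common tool is the Voronoi vertex of $T$: by \eqref{IntersectVor}, writing $p_1,p_2,p_3$ for the vertices of $T$, there exists $w \in \Vor_h(p_1) \cap \Vor_h(p_2) \cap \Vor_h(p_3)$. Assertion (i) will follow at once from Lemma \ref{lem:Diam}(ii): the three vertices share the common Voronoi point $w$, hence $\|p_i-p_j\| \leq 2\kappaD h$, and the edge vectors $he, hf, hg$ are precisely these pairwise differences up to sign.

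For (ii), the observation is that $\cT_h$ is a triangulation of $\R^2$ whose vertex set equals $h\Z^2$ exactly, so no element of $h\Z^2$ can lie in the closure of $T$ except its three vertices --- otherwise such a point would have to be a vertex of $\cT_h$, yet interior to an edge or face of $T$, violating the triangulation structure. Rescaling $T$ by $h^{-1}$ produces a lattice triangle in $\Z^2$ with no additional integer points; Pick's theorem (or, equivalently, a Minkowski argument applied to the parallelogram spanned by $e$ and $f$) then forces $|\det(e,f)| = 1$.

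The plan for (iii) is to exploit the Voronoi property at a judiciously chosen conflicting lattice point. Orient the edges so that $he = p_2 - p_1$ and $hf = p_3 - p_2$ for the pair under consideration, and set $p' := p_1 + p_3 - p_2 \in h\Z^2$, so that $p' - p_2 = h(f-e)$. The Voronoi vertex $w$ delivers two pieces of data: (a) the equidistance $\delta_{p_1}(w) = \delta_{p_2}(w) = \delta_{p_3}(w)$, since $w$ lies on the common boundary of the three Voronoi regions; and (b) the inequality $\delta_{p'}(w) \geq \delta_{p_2}(w)$, since the reverse would contradict $w \in \Vor_h(p_2)$. In an idealised constant-metric setting with $\Met \equiv M$ near $T \cup \{w\}$, writing $c := w - p_2$, the two identities of (a) reduce algebraically to $\langle c, e\rangle_M = -\tfrac{h}{2}\|e\|_M^2$ and $\langle c, f\rangle_M = \tfrac{h}{2}\|f\|_M^2$; substituting these into the expansion of (b) yields $\|e\|_M^2 + \|f\|_M^2 \leq \|f-e\|_M^2$, which rearranges to $\langle e, Mf\rangle \leq 0$ --- the classical Delaunay ``no-flip'' condition across the edge $p_1 p_3$.

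The main obstacle will be to quantify the error arising from the spatial variability of $\Met$. By (i) and Lemma \ref{lem:Diam}, all of $p_1,p_2,p_3,p',w,z$ lie within Euclidean distance $\lesssim 4\kappaD h$ of each other, so the metrics $\Met(p_i), \Met(p'), \Met(z)$ differ pairwise by a factor at most $\tau_{2h}^2$ in the sense of \eqref{def:tauM}. Each replacement of one of these metrics by $M := \Met(z)$ in the expansions of (a) and (b) introduces an additive error of order $(\tau_{2h}^2 - 1)$ times a squared $M$-norm such as $\|c\|_M^2$ or $\|h(f-e)\|_M^2$, each bounded by a constant multiple of $\kappaD h^2$ via (i) and \eqref{KMK}. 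Careful accounting of these errors --- three from the three identities in (a), plus the cross-terms generated in the expansion of $\|c - h(f-e)\|_{\Met(p')}^2 \geq \|c\|_{\Met(p_2)}^2$ --- and clearing the common factor $h^2/2$ should yield exactly the announced $\theta_h = \kappaD(3 + 9\tau_{2h}^2)(\tau_{2h}^2 - 1)$. Since $\Met$ is Lipschitz, $\tau_{2h} = 1 + O(h)$, whence $\theta_h \to 0$ as $h \to 0$.
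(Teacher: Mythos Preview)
Your proposal is correct and follows essentially the same route as the paper: parts (i) and (ii) are identical, and for (iii) both you and the paper introduce the reflected lattice point $p' = p_1+p_3-p_2$ (the paper calls it $s=p-q+r$) and exploit the Voronoi inequality $\delta_{p'}(w)\geq \Delta_h(w)$ together with the equidistance of $p_1,p_2,p_3$ from $w$. The only organisational difference is that the paper replaces all four $\delta_\star(w)^2$ by $\|\star - w\|_M^2$ simultaneously in the single combination $\delta_s^2-\delta_p^2+\delta_q^2-\delta_r^2\ge 0$, which equals $-2h^2\langle e,Mf\rangle$ exactly; this avoids double-counting the $p_2$ error that your two-step substitution incurs, and is what delivers the precise constant $\kappaD(3+9\tau_{2h}^2)(\tau_{2h}^2-1)$ rather than a slightly larger one.
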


\begin{proof}
Point (i). We denote by $p,q,r$ the vertices of $T$, ordered in such way that $p+h e = q$, $q+h f = r$, $r+h g = p$.
The announced estimate follows from \iref{IntersectVor}, 
and from point (ii) of Lemma \ref{lem:Diam}.

Point (ii).
Since $\cT_h$ is a conforming triangulation, the intersection of $T$ with the collection $h \Z^2$ of all vertices of $\cT_h$ consists of only three points: the vertices $p,q,r$ of $T$. Thus the triangle of vertices $-e, 0, f$, homothetic to $T$, contains no point of integer coordinates but its vertices. This implies that $(e,f)$ is a basis of $\Z^2$, hence $|\det(e,f)| = 1$, as announced. %(*details ? *)

Point (iii).
The pairwise distances between $p,q,r$ are bounded by $2 \kappaD h$, see point (i), and since $z \in T$ so are the pairwise distances between $p,q,r,z$. Defining $s := p-q+r \in h \Z^2$, and observing that $\|s-p\| = \|r-q\| \leq 2 \kappaD h$, we find that the pairwise distances between $p,q,r,z,s$ are bounded by $4 \kappaD h$.
%Since $z \in T$, the pairwise distances between $p,q,r,z$ are bounded by $2\nudiv h$. Defining 
%Consider the grid point $s := p-q+r \in h \Z^2$. 
%The pairwise euclidean distances between $\{p,q,r,z\}$ are all bounded by $4 \nudiv h$: this follows from the first point of this lemma, the observation that $s-p = r-q = h f$, and the fact that $z\in T$.
%It follows from the first estimate, the observation that 
%Observing that $\|p-x\|, \|q-x\|, \|r-x\|$ are bounded by $\nudiv h$, and using the triangle inequality (e.g.\ $\|p-q\| \leq \|p-x\|+\|q-x\|$, $\|s-q\| \leq \|p-q\|+\|r-q\|$, and $\|a-z\| \leq \max \{\|a-p\|, \|a-q\|, \|a-r\|\}$ for any $a \in \R^2$), we find that the pairwise distances between elements of $\{p,q,r,s,z\}$ are all bounded by $4 \nudiv h$.

%Note that $\|p-x\|, \|q-x\|, \|r-x\|$ are bounded by $\nudiv h$, $\|s-p\| = \|r-q\|, \|s-r\| = \|p-q\|$ by $2 \nudiv h$, $\|s-x\| \leq 3 \nudiv h$, and $\|s-q\| \leq \|p-q\|+ \|r-q\|\leq 4 \nudiv h$.

Let $x\in \Vor_h(p) \cap \Vor_h(q) \cap \Vor_h(r)$.
We have $\delta_p(x) = \delta_q(x) = \delta_r(x) = \Delta_h(x)  \leq \delta_s(x)$, thus 
\be
\label{deltaSum}
\delta_s(x)^2 \geq \delta_p(x)^2-\delta_q(x)^2 + \delta_r(x)^2.
\ee
(For intuition: in a classical Delaunay triangulation, $x$ would be the circumcenter of $T$, and \eqref{deltaSum} would state that $s$ is outside the circumcircle of $T$.)
Denoting $M := \Met(z)$, and $\delta := \Delta_h(x)$, we obtain
\begin{align}
\nonumber
| \delta_p(x)^2 -  \| x-p\|^2_M | &\leq \delta_p(x)^2 (\tau(p,z)^2 - 1)\\
\label{deltaNormDiff}
& \leq \delta^2 (\tau_{2h}^2 - 1),
\end{align}
using Lemma \ref{lem:Diam}, and likewise for $q,r$. 
We also have 
\begin{align*}
\delta_s(x) &= \|p-q+r-x\|_{\Met(s)}\\
& \leq \|p-x\|_{\Met(s)}+\|q-x\|_{\Met(s)} + \|r-x\|_{\Met(s)}\\
& \leq 3 \delta \tau_{2h}.
\end{align*}
Thus, proceeding as in \iref{deltaNormDiff},
\begin{equation*}
|\delta_s(x) - \|s-x\|^2_M | \leq \delta_s(x)^2 (\tau_{2 h}^2-1) \leq 9 \delta^2  \tau^2_{2h} (\tau_{2 h}^2-1) .
\end{equation*}
%We have $\delta_s(x) \leq \|s-x\|$ $\|s-x\| \leq \|s-p\|+\|p-x\|\leq 2 h \nudiv$, thus 
%$
%|\delta_s(x) - \|s-x\|^2_M | \leq 4 h^2 \nudiv^2 (\tau_{2 h}^2-1).
%$
Inserting in \iref{deltaSum} these estimates of $\delta_\star(x)$, $\star\in \{p,q,r,s\}$, and using the fact that $\delta \leq \kappaD^\frac 1 2 h$, see Lemma \ref{lem:Diam}, we obtain after expansion the announced estimate of $\<e, M f\>$.
%(*issue with s: farther*)
\QED\end{proof}

We next rewrite the finite element energy $\cE'_h$ \eqref{def:EFE} in a form similar to that of the AD-LBR energy $\cE_h$ \eqref{def:E*}. %, in \S\ref{subsec:CompareStencils}
% in terms of coefficients $\gamma_p$ and of stencils $V(p)$, $p \in \Omega_h$. This will ease the comparison, in \S\ref{subsec:CompareStencils}, with the AD-LBR energy $\cE_h$ \eqref{def:E*}. 
%For each $p \in h\Z^2$, we define the stencil
%%We introduce stencils $V_h(p)$, $p\in h \Z^2$, associated to the triangulation $\cT_h$: 
%\begin{equation}
%\label{def:Vhp}
%V_h(p) := \{e\in \Z^2; \, [p,p+he] \text{ is an edge of } \cT_h\}.
%\end{equation}
Let $\vp_p^h : \R^2 \to \R$ be the piecewise linear function on $\cT_h$ such that $\vp_p^h(p)=1$, and $\vp_p^h(q)=0$ for any vertex $q\in h \Z^2$ distinct from $p$.
This is the classical ``hat function'' encountered in finite element analysis.
For all $p \in h\Z^2$, $e\in \Z^2\sm \{0\}$, let
\begin{equation}
\label{def:GammaH}
\gamma^h_p(e) := - \frac 1 2 \int_{\sR^2} \<\nabla \vp_p^h(z), \Diff(z) \nabla \vp_{p+he}^h(z)\> dz
\end{equation}
Clearly, $\gamma_p^h(e) = 0$ if $[p,p+he]$ is not an edge of $\cT_h$, in other words if $e$ does not belong to the stencil $V_h(p)$, defined in \eqref{def:Vhp}.
We express in the next lemma the finite element energy $\cE'_h$ \eqref{def:EFE} in terms of the stencils $V_h$ and of the (potentially negative) weights $\gamma^h_p$.

\begin{lemma}
\label{lem:ESumGammaH}
For any $u \in L^2(\Omega_h)$, extended by periodicity to $h\Z^2$, one has 
\be
\label{EpGammap}
\cE'_h(u) = \sum_{p \in \Omega_h} \sum_{e \in V_h(p)} \gamma_p^h(e) |u(p+he)-u(p)|^2.
\ee
\end{lemma}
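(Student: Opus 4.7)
The plan is to reinterpret the finite element energy as a quadratic form in the nodal values of $u$, identify its matrix of coefficients, exploit the partition of unity and symmetry, then fold the sum over $h\Z^2$ back into one over the fundamental domain $\Omega_h$.

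First I would expand the piecewise linear interpolant as $\interp_{\cT_h} u = \sum_{q\in h\Z^2} u(q) \vp_q^h$ and substitute into \eqref{def:EFE}, obtaining
\begin{equation*}
\cE'_h(u) = \sum_{p,q\in h\Z^2} u(p) u(q)\, A_{pq}, \quad A_{pq} := \int_\Omega \<\nabla \vp_p^h, \Diff \nabla \vp_q^h\> dz.
\end{equation*}
The matrix $(A_{pq})$ is symmetric. The crucial algebraic input is the partition of unity $\sum_{q\in h\Z^2} \vp_q^h \equiv 1$ on $\R^2$, which upon differentiation gives $\sum_q \nabla \vp_q^h \equiv 0$, hence $\sum_q A_{pq} = 0$ for every $p$. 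Combining symmetry with this row-sum-zero property, and expanding $2 u(p)u(q) = u(p)^2 + u(q)^2 - (u(p)-u(q))^2$, the self-energy terms collapse and we obtain the standard identity
\begin{equation*}
\cE'_h(u) = -\tfrac{1}{2} \sum_{p,q\in h\Z^2} A_{pq}\, (u(p)-u(q))^2.
\end{equation*}

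Next I would fold this lattice sum back onto $\Omega_h$. Writing $q = p + he$ with $e\in \Z^2$, and every $p\in h\Z^2$ uniquely as $p = p'+k$ with $p'\in \Omega_h$ and $k\in \Z^2$, the periodicities of $u$, $\cT_h$ and $\Diff$ make the squared difference $(u(p)-u(p+he))^2 = (u(p')-u(p'+he))^2$ independent of $k$. For the coefficient, the change of variables $z\mapsto z-k$ combined with $\vp_{p'+k}^h(z) = \vp_{p'}^h(z-k)$ and $\Diff(z-k)=\Diff(z)$ yields
\begin{equation*}
\sum_{k\in \Z^2} A_{p'+k,\, p'+k+he} = \int_{\R^2} \<\nabla \vp_{p'}^h, \Diff \nabla \vp_{p'+he}^h\> dz = -2\,\gamma^h_{p'}(e),
\end{equation*}
by the very definition \eqref{def:GammaH}. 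Substituting gives exactly the announced identity, and the restriction $e\in V_h(p')$ follows since $\gamma^h_{p'}(e)$ vanishes whenever $[p',p'+he]$ is not an edge of $\cT_h$.

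The only genuinely delicate point is bookkeeping convergence of these doubly-infinite sums: for each $p'\in \Omega_h$ the hat function $\vp_{p'+k}^h$ has support of bounded diameter (controlled by Lemma \ref{lem:Diam}(ii)), so for any fixed $e$ only finitely many $k\in \Z^2$ yield a nonzero contribution, and the same uniform bound makes the outer sum over $e$ finite as well. With this finiteness secured, every manipulation above is legitimate and the lemma follows.
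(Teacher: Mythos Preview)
Your argument is correct, but it is organized differently from the paper's. The paper works triangle by triangle: on each $T\in\cT_h$ with vertices $p,q,r$ it uses the \emph{local} partition of unity $\vp_p^h+\vp_q^h+\vp_r^h\equiv 1$ on $T$ to obtain $s_T(p,p)+s_T(p,q)+s_T(p,r)=0$, and from this $3\times 3$ row-sum identity derives
\[
\int_T \|\nabla(\interp_{\cT_h}u)\|_{\Diff}^2 = -s_T(p,q)(u(p)-u(q))^2 - s_T(q,r)(u(q)-u(r))^2 - s_T(r,p)(u(r)-u(p))^2,
\]
then sums over triangles (viewed on the torus $\Omega$) and regroups by edges. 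You instead invoke the \emph{global} partition of unity to get the row-sum-zero property of the full stiffness matrix, rewrite the quadratic form in graph-Laplacian shape, and then perform a periodicity folding $\sum_k A_{p'+k,\,p'+k+he} = -2\gamma_{p'}^h(e)$ to pass from $h\Z^2$ back to $\Omega_h$. The algebraic heart is the same; what you trade is the explicit elementwise bookkeeping for a cleaner matrix identity plus a translation-averaging step. The paper's route sidesteps any infinite-sum considerations by working directly on the quotient, whereas yours makes the role of periodicity in the definition \eqref{def:GammaH} of $\gamma_p^h$ (integration over $\R^2$ rather than $\Omega$) more transparent.
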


\begin{proof}
For any triangle $T\in \cT_h$, and any $p,q \in h\Z^2$, we denote 
\begin{equation*}
s_T(p,q) := \int_T \<\nabla \vp_p^h(z),\, \Diff(z)\nabla \vp_q^h(z)\> \,dz.
\end{equation*}
Clearly $s_T(p,q) = 0$ if $q$ or $p$ is not a vertex of $T$. 
The coefficient $\gamma_p^h(e)$, $e\in \Z^2$, is thus given by the following sum with at most two non-zero terms:%weights
%Also, for any $e\in \Z^2$ %(this sum has at most two non-zero terms)
\be
\label{gammaSumST}
\gamma_p^h(e) = -\frac 1 2 \sum_{T \in \cT_h} s_T(p,p+he).
\ee

Let $p,q,r \in h\Z^2$ be the vertices of a triangle $T \in \cT_h$. Since the sum $\vp_p^h+\vp_q^h+\vp_r^h$ is constant on $T$, equal to $1$, it has a null gradient on $T$, and therefore 
\begin{equation*}
s_T(p,p)+s_T(p,q)+s_T(p,r)=0.
\end{equation*}
Using this relation, and the two similar ones obtained by a cyclic permutation of $p,q,r$, we obtain
\begin{align*}
&\int_T \| \nabla (\interp_{\cT_h} u)(z)\|_{\Diff(z)}^2 dz \\
=&\, u(p)^2 s_T(p,p) + u(q)^2 s_T(q,q)+u(r)^2 s_T(r,r)\\
 & + 2 u(p) u(q) s_T(p,q) + 2 u(q) u(r) s_T(q,r) \\
 &+ 2 u(r) u(p) s_T(r,p),\\
=& - s_T(p,q) (u(p)-u(q))^2 - s_T(q,r) (u(q)-u(r))^2\\
& - s_T(r,p) (u(r)-u(p))^2.
\end{align*}
Summing this expression over all $T \in \cT_h$, and combining it with \iref{gammaSumST}, we obtain \iref{EpGammap}, which concludes the proof.
\QED\end{proof}

Finally, we provide an approximation of the coefficients $\gamma_p^h$ which will be easily compared with the AD-LBR weights $\gamma_p$ \eqref{def:GammaZ}.

\begin{lemma}
\label{lem:GammaHApprox}
Consider an edge $[p, p+h e]$ of $\cT_h$, shared by the two distinct triangles $T,T' \in \cT_h$. 
Let $h f, h g$ (resp.\ $h f', h g'$) be the two other vector edges of $T$ (resp.\ $T'$), oriented so that $e+f+g = 0$ (resp.\ $e+f'+g' = 0$). Then
%The with $D := \Diff(p)$
%Then, with $M := \Met(p)$
\begin{equation*}
\left|\gamma_p^h(e) + \frac 1 4\left(\<f^\perp,\Diff(p) \, g^\perp\> + \<f'^\perp, \Diff(p) \, g'^\perp\>\right) \right| \leq \ve_h,
\end{equation*}
where $\ve_h := 2 \kappaD^2 \max \{ \|\Diff(x) -\Diff(y)\|;\, \|x-y\| \leq 2 \kappaD h\}$.
\end{lemma}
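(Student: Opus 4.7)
The plan is to expand $\gamma_p^h(e)$ using the decomposition \eqref{gammaSumST} into a sum over the two triangles $T, T'$ sharing the edge $[p, p+he]$, compute the gradient of the hat functions explicitly on each triangle, and then replace $\Diff(z)$ by $\Diff(p)$ up to an error controlled by the modulus of continuity of $\Diff$ over a triangle of diameter $\cO(\kappaD h)$.

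\medskip

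First I would work on the single triangle $T$ with vector edges $he$, $hf$, $hg$ and vertices, say, $p$, $p+he$, $p+h(e+f) = p-hg$. On $T$, the gradient of the hat function $\vp_p^h$ is constant and, since $\vp_p^h$ is affine with $\vp_p^h(p)=1$, $\vp_p^h(p+he) = \vp_p^h(p-hg) = 0$, it must be orthogonal to the opposite edge $hf$. A direct computation using $\<\nabla \vp_p^h, he\> = -1$, together with $|\det(e,f)|=1$ (Lemma \ref{lem:DescribeT}(ii)), yields $\nabla \vp_p^h|_T = \eta f^\perp / h$ with $\eta = \pm 1$. The analogous reasoning for $\vp_{p+he}^h$ gives $\nabla \vp_{p+he}^h|_T = \eta' g^\perp / h$; and a short bookkeeping of signs (using $g = -e-f$ and again $|\det(e,f)|=1$) shows that $\eta \eta' = 1$. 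Therefore on $T$,
\begin{equation*}
\<\nabla \vp_p^h(z), \Diff(z) \nabla \vp_{p+he}^h(z)\> = \frac{1}{h^2}\<f^\perp, \Diff(z) g^\perp\>.
\end{equation*}

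\medskip

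Integrating over $T$, which has area $h^2/2$ by Lemma \ref{lem:DescribeT}(ii), gives
\begin{equation*}
s_T(p, p+he) = \tfrac{1}{2} \<f^\perp, \Diff(p) g^\perp\> + R_T,
\end{equation*}
with remainder $R_T = h^{-2} \int_T \<f^\perp, (\Diff(z)-\Diff(p)) g^\perp\> dz$. Since every $z\in T$ satisfies $\|z-p\| \leq \mathrm{diam}(T) \leq 2\kappaD h$ by Lemma \ref{lem:DescribeT}(i), and since $\|f\|, \|g\| \leq 2\kappaD$ by the same lemma, one obtains
\begin{equation*}
|R_T| \leq \tfrac{1}{2} \|f\|\,\|g\|\,\omega \leq 2\kappaD^2\, \omega,
\quad \omega := \max\{\|\Diff(x)-\Diff(y)\|;\, \|x-y\| \leq 2\kappaD h\}.
\end{equation*}
The identical analysis applied to $T'$ yields $s_{T'}(p,p+he) = \tfrac{1}{2}\<f'^\perp, \Diff(p) g'^\perp\> + R_{T'}$ with $|R_{T'}|\leq 2\kappaD^2 \omega$.

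\medskip

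Finally, inserting these expressions into $\gamma_p^h(e) = -\tfrac{1}{2}(s_T(p,p+he) + s_{T'}(p,p+he))$ from \eqref{gammaSumST} produces the claimed approximation, with the total error bounded by $\tfrac{1}{2}(|R_T|+|R_{T'}|) \leq 2\kappaD^2 \omega = \ve_h$.

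\medskip

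The main obstacle is the careful bookkeeping in step one: pinning down the orientation signs of $\nabla \vp_p^h$ and $\nabla \vp_{p+he}^h$ so that they multiply to $+1$ and the formula involves $\<f^\perp, \Diff g^\perp\>$ with the correct sign. Once this identity is established, the remainder of the proof is a routine modulus-of-continuity estimate that cleanly leverages the uniform diameter bound $2\kappaD h$ and edge-length bound $2\kappaD$ furnished by Lemma \ref{lem:DescribeT}.
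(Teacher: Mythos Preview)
Your proposal is correct and follows essentially the same approach as the paper: compute the constant gradients $\nabla\vp_p^h=\alpha f^\perp/h$ and $\nabla\vp_{p+he}^h=\alpha g^\perp/h$ on $T$ with $\alpha=\det(e,f)\in\{-1,1\}$, integrate over $T$ (of area $h^2/2$), replace $\Diff(z)$ by $\Diff(p)$ using the diameter bound $2\kappaD h$ and edge bound $2\kappaD$ from Lemma~\ref{lem:DescribeT}, repeat on $T'$, and combine via \eqref{gammaSumST}. The paper handles the vertex labeling by exchanging $f$ and $g$ if necessary so that $[p,p-hg]$ is an edge of $T$, which you address implicitly with your ``say''; since $\<f^\perp,\Diff(p)g^\perp\>$ is symmetric in $f,g$ this is harmless.
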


\begin{proof} %(*figure*)
We assume, up to exchanging $f$ and $g$, that $[p, p-h g]$ is an edge of $T$. Let $\alpha := \det(e,f) \in \{-1,1\}$, see point (ii) of Lemma \ref{lem:DescribeT}; note that $\alpha = \det(f,g) = \det(g,e)$.
Let $\gamma$ be the constant value of $\nabla \vp_p^h$ on $T$. Then $\<\gamma, h e\> = -1$ and $\<\gamma, h g\> = 1$.  These two independent linear identities are also satisfied by $\alpha f^\perp/h$, 
hence $\nabla \vp_p^h = \gamma = \alpha f^\perp/h$ on $T$.
%which is thus equal to $\gamma$. 
%We have shown that $\nabla \vp_p^h = \alpha f^\perp/h$ on $T$.
%

Denoting $q := p+hg$, we obtain likewise $\nabla \vp_q^h = \alpha g^\perp/h$ on $T$.
Hence recalling that $|T| = h^2/2$: % and $\alpha^2=1$,
\begin{align*}
\int_T \< \nabla \vp_p^h, \Diff(p) \nabla \vp_q^h\> &= \frac {h^2} 2 \left\<\frac{\alpha f^\perp} h,\Diff(p)  \frac{\alpha g^\perp} h\right\>\\
&= \frac 1 2 \left\<f^\perp,\Diff(p)   g^\perp\right\>
\end{align*}
%Let $D := \Diff(p) = M^{-1}$, and let $R$ be the rotation by $\pi/2$. Then
%\begin{align*}
%&\int_T \< \nabla \vp_p^h, D \nabla \vp_q^h\> = 
%\frac {h^2} 2 \left\<\frac{\alpha f^\perp} h,D \frac{\alpha g^\perp} h\right\> \\
%&=\frac 1 2 \left\<f, R^\trans D R g\right\> = \frac 1 2 \left\<f, \frac M {\det M} g\right\> = \frac{\<f,M g\>}{2 \det M}.
%\end{align*}
Therefore, using point (i) of Lemma \ref{lem:DescribeT} in the last step,
\begin{align*}
&\left|\int_T \< \nabla \vp_p^h, \Diff(z) \nabla \vp_q^h\> dz - \frac 1 2 \<f^\perp,\Diff(p) g^\perp\> \right| \\
&= \left|\int_T \< \nabla \vp_p^h,( \Diff(z)-\Diff(p)) \nabla \vp_q^h\> dz\right| \\
& \leq \frac {h^2} 2 \frac {2\kappaD} h  \frac {2\kappaD} h \max\{\|\Diff(z) - \Diff(p)\|;\, z \in T\} \leq \ve_h.
%& \leq \ve_h := 2 \nudiv^2 \ma
\end{align*}
Proceeding likewise on $T'$, and recalling \eqref{def:GammaH} (or \eqref{gammaSumST}), we conclude the proof. 
\QED
\end{proof}

%We  show in Lemma \eqref{lem:DescribeT} that the edges of the triangles $T \in \cT_h$ define superbases of $\Z^2$ which are \emph{almost} obtuse, and that the finite element energy \eqref{def:EFE} admits an expression \eqref{EpGammap} of a form close to that of the AD-LBR energy \eqref{def:E}.

\subsection{Some properties of $M$-reduced bases}

We establish some technical properties of $M$-reduced bases, thanks to which we will be able to compare in \S \ref{subsec:CompareStencils} the ``geometric'' construction of the ADT finite element stencils $V_h$, with the lattice based construction of the AD-LBR stencils $V$.
%We establish in this section some technical lemmas on $M$-reduced bases, $M \in S_2^+$, which will be required in %regarding their characterization, uniqueness or sta
%This section is devoted to the study of $M$-reduced bases, $M\in S_2^+$. We begin 

%(* Interpretation of the first lemma. Consider a closed polygonal line of vertices $e_1, \cdots, e_n\in \Z^2$ which (i) circles around the origin, (i) leaves no point inside, (iii) and such that the angles between consecutive vertices a either $M$-acute, or not too much $M$-obtuse. Then the vertices of this line contains the smallest non-zero element of $\Z^2$, in the sense of $\|\cdot\|_M$, and the smallest linearly independent vector.*)
\begin{lemma}
\label{lem:AlmostAcuteNeigh}
Let $M \in S_2^+$, let $e_1,\cdots, e_n \in \Z^2$, $n > 2$, and let $\ve \in \{-1,1\}$. Assume that
for all $1 \leq i \leq n$, with the convention $e_{n+1} := e_1$:
\begin{align}
\label{curveCircles} 
\det(e_i, e_{i+1}) &= \ve,\\ %relax to \ve \in \{-1,1\} fixed ?
\label{almostPositiveProducts}
\<e_i, M e_{i+1}\> &> -\frac 1 2 \min \left\{\|e_i\|_M^2, \|e_{i+1}\|_M^2 \right\}.
\end{align}
Then any $M$-reduced basis $(e,f)$ satisfies 
\begin{equation*}
%\label{basisIncluded}
\{e,f\} \subset \{e_1, \cdots, e_n\}. %$(e,f)$ satisfies $e,f \in \{e_1, \cdots, e_n\}$.
\end{equation*}
\end{lemma}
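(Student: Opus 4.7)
The strategy is to decompose an arbitrary lattice vector in the basis $(e_i, e_{i+1})$ and use the strict hypothesis \iref{almostPositiveProducts} to rule out short linear combinations with two non-zero coefficients. Since the determinants $\det(e_i, e_{i+1}) = \ve$ have constant sign, consecutive vectors rotate monotonically around the origin, and their signed angles sum to a positive multiple of $2\pi$ over the cycle; hence the closed positive cones $C_i := \R_+ e_i + \R_+ e_{i+1}$ cover $\R^2$. Because $(e_i, e_{i+1})$ is a $\Z$-basis of $\Z^2$, every $v \in \Z^2 \cap C_i$ admits a unique decomposition $v = \alpha e_i + \beta e_{i+1}$ with integer coefficients $\alpha, \beta \geq 0$.

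The core estimate, which I would prove first, is: if $\alpha, \beta \geq 1$, then $\|v\|_M > \max\{\|e_i\|_M, \|e_{i+1}\|_M\}$. Expanding $\|v\|_M^2$ and inserting \iref{almostPositiveProducts} gives the strict inequality
\begin{equation*}
\|v\|_M^2 > \alpha^2 \|e_i\|_M^2 + \beta^2 \|e_{i+1}\|_M^2 - \alpha\beta \min\{\|e_i\|_M^2, \|e_{i+1}\|_M^2\},
\end{equation*}
and a short case split on whether $\alpha \geq \beta$ or $\alpha < \beta$, combined with the elementary bound $\alpha^2 - \alpha\beta + \beta^2 \geq 1$ for positive integers, yields the claim.

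Applied to $v = e$, the $M$-shortest non-zero lattice vector, the estimate rules out both coefficients being positive in any decomposition $e = \alpha e_i + \beta e_{i+1}$, since one would get $\|e\|_M > \|e_i\|_M \geq \|e\|_M$. So $e$ is a positive integer multiple of a single $e_i$, and minimality pins this multiple to $1$, giving $e = e_i$. The same argument locates $f$ in the generic case where the cone $C_j$ containing $f$ satisfies $\{\pm e_j, \pm e_{j+1}\} \cap \{\pm e\} = \emptyset$, since $\|f\|_M$ is then bounded above by both $\|e_j\|_M$ and $\|e_{j+1}\|_M$. In the remaining case, say $e_j = \pm e$, writing $f = \alpha e_j + \beta e_{j+1}$ one has $\beta \geq 1$ (because $f \notin e\Z$), and the same arithmetic, now combined with $\|e\|_M \leq \|e_{j+1}\|_M$, still forces $\alpha = 0$ and $\beta = 1$, so that $f = e_{j+1}$.

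The main obstacle is this last subcase: when $e$ lies on the boundary of the cone containing $f$, only one of the two convenient norm comparisons on $\|f\|_M$ is available, and the strict hypothesis \iref{almostPositiveProducts} has to be invoked carefully to preclude decompositions of $f$ with $\alpha \geq 1$. Everything else reduces to the elementary arithmetic built on the core estimate of the second paragraph.
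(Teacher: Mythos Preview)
Your argument is correct and shares the paper's core ingredients: the cone covering $\R^2 = \bigcup_i (\R_+ e_i + \R_+ e_{i+1})$ coming from the winding condition \iref{curveCircles}, and the strict norm estimate $\|v\|_M > \max\{\|e_i\|_M,\|e_{i+1}\|_M\}$ for $v=\alpha e_i+\beta e_{i+1}$ with $\alpha,\beta\geq 1$. The paper packages the conclusion more economically and avoids your case split on $f$: rather than treating $e$ and $f$ separately, it takes any primitive $z\in\Z^2\setminus\{e_1,\ldots,e_n\}$, observes that primitivity forces both coefficients $\alpha,\beta\geq 1$ in its cone decomposition, and then notes that $\max\{\|e_i\|_M,\|e_{i+1}\|_M\}\geq\lambda_2(M)$ since $e_i,e_{i+1}$ are linearly independent---so $\|z\|_M>\lambda_2(M)$ and $z$ cannot belong to any $M$-reduced basis.
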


\begin{proof}
Let $z\in \Z^2\sm \{e_1, \cdots, e_n\}$. Our objective is to show that $z$ cannot be an element of an $M$-reduced basis, and we may therefore assume that $z$ has co-prime coordinates.

It follows from \eqref{curveCircles} that the closed polygonal line of consecutive vertices $e_1, \cdots, e_n$, circles at least once around the origin, see Figure \ref{fig:RBSec}. Hence $z = \alpha e_i+ \beta e_{i+1}$, for some $1 \leq i \leq n$ and some $\alpha, \beta \geq 0$. Since $(e_i, e_{i+1})$ is a basis of $\Z^2$ (indeed $|\det(e_i,e_{i+1})| = 1$), the coefficients $\alpha$ and $\beta$ are integers. Since $z\notin \{e_i,e_{i+1}\}$, $\alpha+\beta \geq 2$. Since $z$ has co-prime coordinates, $\alpha\beta \neq 0$.

Assuming without loss of generality that $\|e_i\|_M \geq \|e_{i+1}\|_M$, we obtain using \eqref{almostPositiveProducts}:
\begin{align*}
&\|z\|_M^2 = \alpha^2 \|e_i\|_M^2 + \beta^2 \|e_{i+1}\|_M^2 +2\alpha \beta \<e_i , M e_{i+1}\>\\
&> \alpha^2 \|e_i\|_M^2 + \beta^2 \|e_{i+1}\|_M^2 -\alpha \beta \min \{\|e_i\|_M^2, \|e_{i+1}\|_M^2\}\\
&\geq \|e_i\|^2_M + (\alpha^2+\beta^2-1-\alpha\beta) \|e_{i+1}\|_M^2.
\end{align*}
Observing that $\alpha^2+\beta^2-1-\alpha\beta \geq 0$ for all $\alpha, \beta \in [1, \infty[$, we obtain $\|z\|_M > \|e_i\|_M$. %\geq 1$ and $\beta \geq 1$.
Since $e_i$ and $e_{i+1}$ are linearly independent, we have $\|e_i\|_M \geq \lambda_2(M)$.
Finally $\|z\|_M > \lambda_2(M)$, hence $z$ cannot be an element of an $M$-reduced basis, which concludes the proof. %This concludes the proof.
\QED
\end{proof}

The next corollary reverses the construction, presented in Corollary \ref{corol:ObtuseSuperBaseFromReducedBasis2}, of an $M$-obtuse superbase from an $M$-reduced basis.

\begin{corollary}
\label{corol:ReducedBasisInObtuseSuperBase}
Let $M \in S_2^+$ and let $(e,f,g)$ be an $M$-obtuse superbase of $\Z^2$, ordered so that $\|e\|_M \leq \|f\|_M \leq \|g\|_M$. Then $(e,f)$ is an $M$-reduced basis. % (and $\<e, M f\> \leq 0$).
\end{corollary}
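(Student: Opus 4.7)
The plan is to apply Lemma~\ref{lem:AlmostAcuteNeigh} to the sextuple of vectors $\{\pm e, \pm f, \pm g\}$, arranged in the cyclic order
\begin{equation*}
(e_1,e_2,e_3,e_4,e_5,e_6) := (e,\,-g,\,f,\,-e,\,g,\,-f),
\end{equation*}
and then use the norm ordering to pin down the two shortest vectors among them.

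First I would verify the hypotheses of Lemma~\ref{lem:AlmostAcuteNeigh} for this cyclic sequence. Set $\varepsilon := \det(e,f)\in\{-1,1\}$. Since $g = -e-f$, a direct computation gives $\det(e,g)=-\varepsilon$ and $\det(f,g)=\varepsilon$; using the bilinearity of $\det$ and the sign flip under negation, each of the six consecutive pairs above yields $\det(e_i,e_{i+1})=\varepsilon$ (e.g.\ $\det(e,-g)=-\det(e,g)=\varepsilon$, $\det(-g,f)=-\det(g,f)=\varepsilon$, and so on around the cycle). This establishes \eqref{curveCircles}. Next, each consecutive pair in the sequence is of the form $(u,-v)$ or $(-u,v)$ with $u,v$ distinct members of $\{e,f,g\}$; because $(e,f,g)$ is $M$-obtuse, $\langle u, Mv\rangle \le 0$, so
\begin{equation*}
\langle e_i, M e_{i+1}\rangle \;\geq\; 0 \;>\; -\tfrac{1}{2}\min\{\|e_i\|_M^2,\|e_{i+1}\|_M^2\},
\end{equation*}
which gives the strict inequality \eqref{almostPositiveProducts} trivially.

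Lemma~\ref{lem:AlmostAcuteNeigh} then asserts that any $M$-reduced basis $(e',f')$ of $\Z^2$ must satisfy $\{e',f'\}\subset\{\pm e,\pm f,\pm g\}$. Given the assumed norm ordering $\|e\|_M\le\|f\|_M\le\|g\|_M$, the first Minkowski minimum $\lambda_1(M)$ is attained precisely by $\pm e$; and the second minimum $\lambda_2(M)$, realized on $\Z^2\sm e'\Z$ within the same finite set, must equal $\|f\|_M$, since $\|f\|_M\le\|g\|_M$ and $\pm f$ are linearly independent from $e$. Therefore $\|e\|_M=\lambda_1(M)$ and $\|f\|_M=\lambda_2(M)$. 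Combined with the fact that $|\det(e,f)|=1$ (which holds because $(e,f,g)$ is a superbase, so $(e,f)$ is already a basis of $\Z^2$), this shows that $(e,f)$ meets Definition~\ref{def:ReducedBasis}, completing the proof.

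The only non-routine step is choosing the correct cyclic order of the six vectors so that the consecutive determinants share a common sign; the rest is a bookkeeping application of the obtuseness hypothesis and the norm ordering. No delicate estimates are needed, because the $M$-obtuseness already makes \eqref{almostPositiveProducts} hold with slack.
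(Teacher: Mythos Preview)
Your proof is correct and follows essentially the same route as the paper: apply Lemma~\ref{lem:AlmostAcuteNeigh} to the cyclic sequence $(e,-g,f,-e,g,-f)$, deduce that any $M$-reduced basis lies in $\{\pm e,\pm f,\pm g\}$, and then use the norm ordering to conclude that $(e,f)$ itself realizes the two Minkowski minima. The paper is terser (it simply asserts the hypotheses hold ``by construction''), while you spell out the determinant and scalar-product checks; the final step---passing from $\|e\|_M\le\|e'\|_M$, $\|f\|_M\le\|f'\|_M$ to the conclusion that $(e,f)$ is reduced---is handled with the same appeal to the minimality characterization in Definition~\ref{def:ReducedBasis}.
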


\begin{proof}
The family $(e,-g,f,-e,g,-f)$ satisfies by construction the conditions of the previous lemma. %, \eqref{curveCircles} and \eqref{almostPositiveProducts}, since the scalar product between consecutive elements is non-negative. 
Hence any $\Met(z)$-reduced basis $(e',f')$ of $\Z^2$ satisfies $\{e',f'\}\subset \{e,f,g,-e,-f,-g\}$. Observing that $e'$ and $f'$ are linearly independent, that $\|e'\|_M \leq \|f'\|_M$, and that $\|e\|_M \leq \|f\|_M \leq \|g\|_M$, we obtain that $\|e\|_M \leq \|e'\|_M$ and $\|f\|_M \leq \|f'\|_M$. 
Recalling that $M$-reduced bases are defined by the minimality of their $\|\cdot\|_M$-norms, see Definition \ref{def:ReducedBasis}, we obtain as announced that $(e,f)$ is an $M$-reduced basis.
%(*To do.*) 
%Applying the previous lemma to $(e,-g,f-e,g,-f)$, we obtain that 
\QED
\end{proof}

The previous lemma shows that for any $z \in \Omega$, there exists an $\Met(z)$-reduced basis $(e,f)$ such that 
\begin{equation}
\label{VReducedBasis}
V(z) = \{e,f,e+f,-e,-f,-e-f\}.
\end{equation}
%We use in the following the shorthand
%\begin{align*}
%&\{a_1, \cdots, a_n,\, \text{and opposites}\}\\
%& := 
%\{a_1, \cdots, a_n\} \cup \{-a_1, \cdots, -a_n\}.
%\end{align*}

\begin{figure}
\centering
\includegraphics[width=3cm]{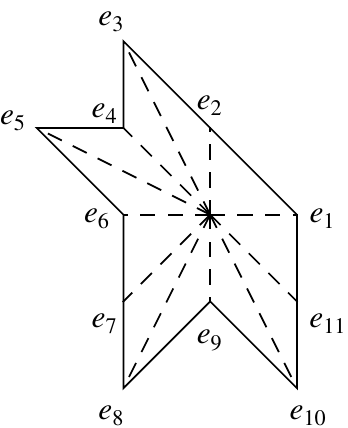}
\hspace{0.8cm}
\includegraphics[width=1.5cm]{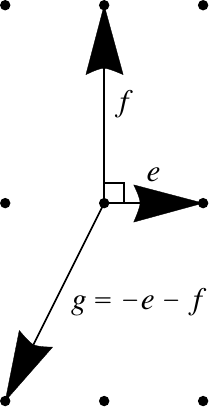}
\hspace{0.4cm}
\includegraphics[width=1.5cm]{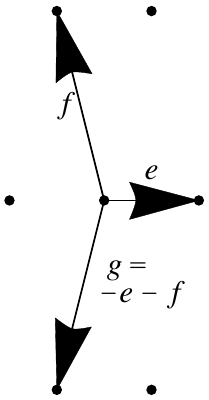}
\caption{(left) A family $e_1, \cdots, e_{11}$ satisfying condition \eqref{curveCircles} of Lemma \ref{lem:AlmostAcuteNeigh}: the closed polygonal line of vertices $(e_1, \cdots, e_{11})$ circles (at least) once around the origin, and the triangles $\widehat {(0,e_i,e_{i+1})}$ have area $1/2$.
(Center and right) The lattice $\Z^2$, and an $M$-reduced basis $(e,f)$, shown after a linear change of coordinates by $A$, such that $A^\trans A = M\in S_2^+$. Case $\mu(M) = 0$ (center), and case $2\mu(M) = \lambda_1(M)^2$ (right).
}
\label{fig:RBSec}
\end{figure}

Given $M \in S_2^+$, and an $M$-reduced basis $(e,f)$ of $\Z^2$, we denote $\orth(M) := |\<e,M f\>|$. This value can be expressed in terms of the Minkowski minima \eqref{def:MinkowskiMinima} and thus does not depend on the particular choice of $M$-reduced basis.
Indeed, recalling the identity
\begin{equation*}
\<e,M f\>^2+\det (M) \det (e,f)^2 = \|e\|_M^2 \|f\|_M^2,
\end{equation*}
we obtain 
\begin{equation}
\label{def:orth}
\mu(M) = |\<e,M f\>| = \sqrt{\lambda_1(M)^2 \lambda_2(M)^2 - \det(M)}.
\end{equation}
In addition one has 
%We saw in the introduction \eqref{ineq:ScalNorm} that 
\begin{equation}
\label{ineq:MuLambda}
0 \leq 2 \mu(M) \leq \lambda_1(M)^2,
\end{equation}
where the right hand side follows from Lemma \ref{lem:IneqScal}.
A vanishing value, $\mu(M) = 0$, indicates that the lattice $\Z^2$ admits an $M$-orthogonal basis. % is an orthogonal lattice, from the point of view of the metric $M$.
In contrast, when the upper bound is met, $2 \mu(M) = \lambda_1(M)^2$, one has $\|f\|_M = \|f+\ve e\|_M$ for $\ve := -{\rm sign}\<e,M f\>$, hence the reduced basis $(e,f)$ is not unique even up to sign changes. See Figure \ref{fig:RBSec}.

We next show that the stencils of the AD-LBR do not depend on the choices of reduced bases, as was announced in the introduction.

\begin{lemma}
\label{lem:indepBasis}
%Let $z \in \Omega$, and let $M := \Met(z)$.
The weights $\gamma_z : \Z^2 \to \R_+$ used in the AD-LBR at a point $z \in \Omega$ (defined on $V(z)$ by \eqref{def:GammaZ} and extended to $\Z^2$ by $0$), do not depend on the choice of $\Met(z)$-obtuse superbase of $\Z^2$.
%The weights $\gamma_z : \Z^2 \to \R_+$, defined in \iref{def:GammaM} and used in the AD-LBR, do not depend on the choice of $\Met(z)$-reduced basis.
\end{lemma}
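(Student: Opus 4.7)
The plan is to rewrite the weight formula \eqref{def:GammaZ} using \eqref{eq:DM}: with $M := \Met(z)$,
\[
\gamma_z(\pm e_i) = -\tfrac{1}{2}\DDiff(z)\<e_{i+1}, M e_{i+2}\>.
\]
The right-hand side is symmetric in the pair $\{e_{i+1}, e_{i+2}\}$ and unchanged when any $e_j$ is replaced by $-e_j$; hence $\gamma_z$ depends on the $M$-obtuse superbase only through the unordered triple of $\pm$-equivalence classes $\{[e_0], [e_1], [e_2]\}$ in $\Z^2$. It therefore suffices to show that this triple of $\pm$-classes, together with the weight it assigns to each class, is independent of the choice of $M$-obtuse superbase of $\Z^2$.

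I would then classify the possible $M$-obtuse superbases via Corollary \ref{corol:ReducedBasisInObtuseSuperBase}: after reordering the superbase so that $\|e_1\|_M \le \|e_2\|_M \le \|e_0\|_M$, the pair $(e_1, e_2)$ is $M$-reduced. The $M$-reduced bases are controlled by the invariant $\mu(M)$ of \eqref{def:orth}, constrained by \eqref{ineq:MuLambda}. In the generic range $0 < 2\mu(M) < \lambda_1(M)^2$, the reduced basis is unique up to signs (as recalled around \eqref{ineq:MuLambda}), and obtuseness forces $\<e_1, M e_2\> = -\mu(M) < 0$ strictly, which rigidly fixes the sign of $e_2$ relative to $e_1$; the superbase is then unique up to a global sign change, which leaves the $\pm$-class set unchanged. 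At the boundary $2\mu(M) = \lambda_1(M)^2$, a second reduced basis $(e_1, e_1+e_2)$ appears, yielding after appropriate sign adjustments the obtuse superbase $(-e_1, e_1+e_2, -e_2)$; a direct computation shows its $\pm$-class set is $\{[e_1], [e_2], [e_1+e_2]\}$, identical to that of the original superbase $(e_1, e_2, -e_1-e_2)$. In both situations the first paragraph yields uniqueness of $\gamma_z$.

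The main obstacle is the remaining boundary case $\mu(M) = 0$, where $(e_1, e_2)$ is $M$-orthogonal and the obtuseness condition $\<e_1, M e_2\> \le 0$ is compatible with both $e_2$ and $-e_2$. Two genuinely distinct $M$-obtuse superbases now appear, $(e_1, e_2, -e_1-e_2)$ and $(e_1, -e_2, -e_1+e_2)$, with $\pm$-class sets $\{[e_1], [e_2], [e_1+e_2]\}$ and $\{[e_1], [e_2], [e_1-e_2]\}$ respectively, differing on the last entry. The decisive observation is that these differing classes carry weight zero in the superbase that contains them: the formula yields
\[
\gamma_z(\pm(e_1+e_2)) = -\tfrac{1}{2}\DDiff(z)\<e_1, M e_2\> = 0
\]
for the first superbase, and analogously $\gamma_z(\pm(e_1-e_2)) = 0$ for the second, while both superbases assign the same common values $\tfrac{1}{2}\DDiff(z)\|e_2\|_M^2$ to $\pm e_1$ and $\tfrac{1}{2}\DDiff(z)\|e_1\|_M^2$ to $\pm e_2$ (a short computation using $\<e_1, M e_2\> = 0$). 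After extension by zero off their respective stencils, the two weight functions agree on all of $\Z^2$, concluding the proof.
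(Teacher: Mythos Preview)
Your overall strategy matches the paper's: reduce via Corollary \ref{corol:ReducedBasisInObtuseSuperBase} to $M$-reduced bases, and handle the $\mu(M)=0$ case by observing that the weight on the ``extra'' vector $\pm(e_1\pm e_2)$ vanishes while the weights on $\pm e_1,\pm e_2$ agree. That part is fine and essentially identical to the paper.

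Where you diverge is the case $\mu(M)>0$. The paper does \emph{not} split on whether $2\mu(M)<\lambda_1(M)^2$ or $2\mu(M)=\lambda_1(M)^2$. Instead it applies Lemma \ref{lem:AlmostAcuteNeigh} to the family $(e',-g',f',-e',g',-f')$ built from the second superbase, obtaining directly $\{e,f\}\subset\{\pm e',\pm f',\pm g'\}$; strict negativity of \emph{all} pairwise $M$-scalar products (which holds precisely when $\mu(M)\neq 0$) then forces $\{e,f\}\subset\{e',f',g'\}$ or $\{e,f\}\subset\{-e',-f',-g'\}$, hence the two superbases have the same $\pm$-class set. This is a single, uniform argument.

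Your alternative route via classification of reduced bases has two loose ends. First, the uniqueness of the reduced basis up to signs in the range $0<2\mu(M)<\lambda_1(M)^2$ is not actually established ``around \eqref{ineq:MuLambda}''; that passage only remarks that non-uniqueness occurs at the upper boundary. The paper proves the uniqueness statement you need as Lemma \ref{lem:uniqueness}(i), which comes \emph{after} the present lemma, so you are implicitly forward-referencing. Second, in the boundary case $2\mu(M)=\lambda_1(M)^2$ you exhibit one alternative obtuse superbase and check its $\pm$-class set coincides, but you do not argue that you have enumerated \emph{all} $M$-obtuse superbases (for instance, when additionally $\lambda_1(M)=\lambda_2(M)$ there are further reduced bases to consider). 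Both points can be repaired, but the paper's use of Lemma \ref{lem:AlmostAcuteNeigh} sidesteps them entirely.
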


\begin{proof}
We denote $M := \Met(z)$ and $D := \Diff(z)$. Let $(e,f,g)$ and $(e',f',g')$ be two $M$-obtuse superbases, and let $V,V'$ and $\gamma, \gamma' : \Z^2 \to \R_+$ be the corresponding AD-LBR stencils and weights defined by \eqref{def:Vz} and \eqref{def:GammaZ}. % consider two $M$-obtuse superbases $(e,f,g)$ and $(e',f',g')$. %, such that $\|e\|_M \leq \|f\|_M \leq \|g\|_M$ and $\|e'\|_M \leq \|f'\|_M \leq \|g'\|_M$.
We may assume, using Corollary \ref{corol:ReducedBasisInObtuseSuperBase} and up to reordering, that $(e,f)$ and $(e', f')$ are $M$-reduced bases.

Corollary \ref{corol:ObtuseSuperBaseFromReducedBasis2} states that the scalar products $\<e, M g\>$, $\<f, M g\>$, $\<e', M g'\>$ and $\<f', M g'\>$ are (strictly) negative. On the other hand 
\begin{equation}
\<e,M f\> = \<e', M f'\> = - \mu(M) \leq 0.
\end{equation}
%Then

%We denote $M := \Met(z)$, and consider two $M$-reduced bases $(e,f)$, $(e',f')$. We assume as in the introduction that $\<e, M f\> \leq 0$ and $\<e' ,M f'\>\leq 0$, up to changing $f$ or $f'$ into its opposite. 
%Let $g := -e-f$ and $g' := -e'-f'$.
%Our objective is to show that the weights $\gamma, \gamma': \Z^2 \to \R_+$, defined by \iref{def:GammaM} and associated to the bases $(e,f)$, $(e',f')$, are identical. 

%Recall that $\<f,M g\>$, $\<g,M e\>$, $\<f', M g'\>$, $\<g', M e'\>$ are negative, see \eqref{fMg}. 
Applying Lemma \ref{lem:AlmostAcuteNeigh} to the family 
\begin{equation*}
(e',-g',f',-e',g',-f')
\end{equation*}
we obtain that
\be
\label{basisInBasis}
\{e,f\} \subset \{e',f',g',  -e', -f', -g'\}. 
\ee
If $\mu(M)\neq 0$, then $\<e,M f\>$ and $\<e',M f'\>$ are negative, and not merely non-positive, thus $\{e,f\} \subset \{e',f',g'\}$, or $\{e,f\} \subset \{-e',-f',-g'\}$. Since $e+f+g = 0 = e'+f'+g'$, it follows that $\{e,f,g\} = \{e',f',g'\}$, or $\{e,f,g\} = \{-e',-f',-g'\}$. The stencils $V,V'$ are thus identical, see \eqref{def:Vz}, and so are the weights $\gamma, \gamma'$.

If $\mu(M) = 0$, then the stencils $V,V'$ may not be identical. Observe however that %$\<e,M f\> = 0 = \<e', M f'\>$, and therefore 
$\<e^\perp, D f^\perp\> = 0 = \<e'^\perp, D f'^\perp\>$, using \eqref{eq:DM}. Hence using the weights expression \eqref{def:GammaZ}: %we find
%We may not have $g' = \pm g$, but this is not an issue since the associated weight is null $\gamma_M(\pm g) = - \<e, M f\>/(2 \det M) = 0$, using \iref{def:GammaM}. The weights of the other points are also unaffected by the indeterminacy of $g$: using again \iref{def:GammaM}
\begin{align}
\label{gammaOrth}
\gamma(\pm g) &= - \<e^\perp, D f^\perp\>/2= 0,\\
\nonumber
\gamma(\pm e) &= \|f^\perp\|_D^2/2,&
%\nonumber 
\gamma(\pm f)  &= \|e^\perp\|_D^2/ 2 ,
%\gamma(\pm g) &= - \<e, M f\>/(2 \det M) = 0\\
%\nonumber
%\gamma(\pm e) &= \|f\|_M^2/ (2 \det M),\\
%\nonumber 
%\gamma(\pm f)  &= \|e\|_M^2/ (2 \det M),
\end{align} 
and likewise for $\gamma', e', f', g'$.
Note also that $\|g'\|^2_M = \|e'\|_M^2 +\|f'\|_M^2 > \lambda_2(M)^2$, hence $e$ and $f$ are different from $g'$ and $-g'$. It follows from \eqref{basisInBasis} that $\{e,f\}=\{\ve_1 e', \ve_2 f'\}$ for some $\ve_1, \ve_2 \in \{-1,1\}$. This implies $\gamma=\gamma'$ in view of \iref{gammaOrth}, and concludes the proof. % of $\gamma_M$, we find $due to the orthogonality relations, we have 
\QED\end{proof}

%(* Lemma : lower bound on size of vector with coprime coordinates, not in the reduced basis ?
%Lemma on the ratio of minima. Discussion on euclidean norm.
%*)
The next lemma establishes weak uniqueness and stability properties for $M$-reduced bases, 
in the case of a strict inequality $2 \mu(M) < \lambda_1(M)^2.$ 
%when the inequality \eqref{ineq:MuLambda} is strict.

\begin{lemma}
\label{lem:uniqueness}
Consider $M,M'\in S_2^+$, an $M$-reduced basis $(e,f)$, and an $M'$-reduced basis $(e',f')$. 
Let $\tau \geq 1$ be such that $\tau^{-2} M \leq M' \leq \tau^2 M$, in the sense of symmetric matrices.
Assume either:
\begin{enumerate}[(i)]
\item $2\mu(M) < \lambda_1(M)^2$, and $\tau=1$ (i.e. $M' = M$).
\item $4\mu(M) \leq \lambda_1(M)^2$, and $\tau^4\leq  1+\frac 1 3 {\kappa(M)^{-2}}$. 
\end{enumerate}
Then 
$
\{e',f'\}\subset \{e,f,-e,-f\}.
$
\end{lemma}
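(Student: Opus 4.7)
The plan is to apply Lemma \ref{lem:AlmostAcuteNeigh} to the cyclic 4-family $(e_1,e_2,e_3,e_4) := (e,f,-e,-f)$ with the metric $M'$; the conclusion $\{e',f'\} \subset \{e,f,-e,-f\}$ then falls out immediately. Without loss of generality I would first replace $f$ by $-f$ if needed so that $\<e,Mf\> = -\mu(M) \leq 0$; this preserves both the set $\{e,f,-e,-f\}$ and the property of being an $M$-reduced basis. The determinant condition \eqref{curveCircles} is automatic for this family, since $\det(e,f) = \det(f,-e) = \det(-e,-f) = \det(-f,e)$, all equal to some $\varepsilon \in \{-1,1\}$.

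The condition \eqref{almostPositiveProducts} boils down to the single strict inequality
\begin{equation*}
|\<e, M' f\>| < \tfrac{1}{2}\min\{\|e\|_{M'}^2, \|f\|_{M'}^2\}.
\end{equation*}
In case (i), $M' = M$ and the left-hand side equals $\mu(M)$, while by the ordering convention $\|e\|_M \leq \|f\|_M$ in Definition \ref{def:ReducedBasis} the right-hand side equals $\lambda_1(M)^2/2$, so the assumption $2\mu(M) < \lambda_1(M)^2$ is precisely what is needed.

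In case (ii) I would transfer the bounds from $M$ to $M'$ using the parallelogram identity
\begin{equation*}
4 \<e, M' f\> = \|e+f\|_{M'}^2 - \|e-f\|_{M'}^2,
\end{equation*}
applying $\tau^{-2} M \leq M' \leq \tau^2 M$ with the appropriate sign choice (which relies on the normalization $\<e,Mf\> \leq 0$). Writing $S := \|e\|_M^2 + \|f\|_M^2$ and $\mu := \mu(M)$ this yields
\begin{equation*}
|\<e, M' f\>| \leq \tfrac{1}{4}\bigl[(\tau^2 - \tau^{-2}) S + 2\mu(\tau^2 + \tau^{-2})\bigr].
\end{equation*}
Combined with $\min\{\|e\|_{M'}^2, \|f\|_{M'}^2\} \geq \tau^{-2}\lambda_1(M)^2$, it suffices to establish
\begin{equation*}
(\tau^4 - 1) S + 2\mu(\tau^4+1) < 2\lambda_1(M)^2.
\end{equation*}

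The main obstacle is finishing this algebraic verification, and the two ingredients that close it are: the hypothesis $4\mu \leq \lambda_1(M)^2$, and the observation that $\lambda_2(M)^2 \leq \lambda_{\max}(M) \leq \kappa(M)^2 \lambda_{\min}(M) \leq \kappa(M)^2 \lambda_1(M)^2$, giving $S \leq (1+\kappa(M)^2) \lambda_1(M)^2$. Substituting these, the sufficient condition becomes $\tau^4(2\kappa(M)^2 + 3) < 2\kappa(M)^2 + 5$, i.e.\ $\tau^4 < 1 + 2/(2\kappa(M)^2+3)$, which is strictly implied by $\tau^4 \leq 1 + \tfrac{1}{3}\kappa(M)^{-2}$ whenever $\kappa(M) \geq 1$ (since $4\kappa(M)^2 \geq 3$). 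Lemma \ref{lem:AlmostAcuteNeigh} then yields the desired inclusion.
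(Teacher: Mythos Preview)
Your proof is correct and follows essentially the same route as the paper's: the parallelogram identity to transfer $\<e,Mf\>$ to $\<e,M'f\>$, the bound $\|f\|_M \leq \kappa(M)\|e\|_M$ to control $S$, and the application of Lemma \ref{lem:AlmostAcuteNeigh} to the family $(e,f,-e,-f)$. The only cosmetic differences are that you normalize the sign of $\<e,Mf\>$ upfront and treat case (i) separately, whereas the paper handles both cases and both signs in a single computation and a ``proceeding likewise''.
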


\begin{proof}
%(*only cite FM-LBR*)
Denoting $\alpha := 2\mu(M)/\lambda_1(M)^2$, we obtain:
\begin{align*}
&4\<e,M' f\> = \|e+f\|_{M'}^2 - \|e-f\|_{M'}^2\\
 &\leq \tau^2 \|e+f\|_M^2 - \tau^{-2}\|e-f\|_M^2\\
 &= (\tau^2 -\tau^{-2}) (\|e\|_M^2+\|f\|_M^2)
+ 2 (\tau^2+\tau^{-2}) \<e,M f\> \\
 &\leq ( (\tau^2-\tau^{-2})(1+\kappa(M)^2) + \alpha (\tau^2+ \tau^{-2}) ) \|e\|_M^2 \\ %\lambda_1(M)^2\\
 &\leq ( (\tau^4-1) (1+\kappa(M)^2) +\alpha (\tau^4+1) ) \|e\|_{M'}^2. %\lambda_1(M')^2. %\\
% & \leq \frac {11} 6 \lambda_1(M').
\end{align*}
In the fourth line we used Lemma \ref{lem:MinkowskiBounds}, which implies that $\|f\|_M = \lambda_2(M) \leq \kappa(M) \lambda_1(M) = \kappa(M) \|e\|_M$, and Lemma \ref{lem:IneqScal} to bound $2 \<e, M f\>$.
%We used $\|f\|_M \leq \kappa(M) \|e\|_M$, see \eqref{boundLambdas}, in the fourth line.
Replacing $\alpha$ and $\tau$ with their assumed upper bounds, we obtain $2\<e,M' f\> < \|e\|_{M'}^2$. Proceeding likewise, we obtain $2 |\<e, M' f\>| <  \min \{\|e\|_{M'}^2, \|f\|_{M'}^2\}$.
We may therefore apply Lemma \ref{lem:AlmostAcuteNeigh} to $M'$ and $(e,f,-e,-f)$, which implies  $\{e',f'\} \subset \{e,f,-e,-f\}$ as announced. 
\QED\end{proof}

\subsection{
Comparison of the stencils
}
\label{subsec:CompareStencils}

We assume in this subsection that the scale parameter $h$ is sufficiently small. Our assumption is stronger than the one used in \S \ref{subsec:ExistsADT}, see  \eqref{assumHADT}, hence in particular there exists an Anisotropic Delaunay Triangulation $\cT_h$. More precisely we assume that 
\be
\label{assum:h}
\tau_h \leq \sqrt[4]{1+1/(3 \kappaD^2)} \stext{ and } \theta_h \leq  \theta_0 := 1/(4 \kappaD). %\frac {\nu_1^2} 4.
\ee
See \eqref{def:KappaD}, \eqref{def:TauH}, and Lemma \ref{lem:DescribeT} for the definition of $\kappaD$, $\tau_h$ and $\theta_h$ respectively.
For Lipschitz metrics, $\tau_h = 1+\cO(h)$ and $\theta_h = \cO(h)$.

Our objective is to compare the stencils $V(p)$, $V_h(p)$, of the AD-LBR \eqref{def:Vz} and of the ADT finite element discretization \eqref{def:Vhp} respectively, at a point $p \in h \Z^2$. The next lemma shows that they are equal \emph{unless}  the lattice $\Z^2$ is almost orthogonal with respect to the local metric; a property quantified via $\mu(\Met(p))$, see \eqref{def:orth}.

\begin{lemma}
\label{lem:EqualUnlessAlmostOrthogonal}
Let $p \in h \Z^2$, and let $M := \Met(p)$. 
If $\mu(M) > \theta_h$, then $V_h(p) = V(p)$.
In any case, one has for any $M$-reduced basis $(e,f)$:
\begin{align}
\label{VhSupset}
V_h(p) \supset& \{e,f,-e,-f\} \\
%\nonumber
\label{VhSubset}
V_h(p) \subset& \{e,f,e+f,e-f, \ -e,-f,-e-f,f-e\} %\\
%&-e,-f,-(e+f), -(e-f)\}.
\end{align}
\end{lemma}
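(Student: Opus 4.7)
The plan is to study the cyclic fan of edges emanating from $p$ in $\cT_h$: enumerate the neighbors of $p$ as $p+he_1,\ldots,p+he_n$ in cyclic angular order, so that $V_h(p)=\{e_1,\ldots,e_n\}$ and each consecutive pair $(e_i,e_{i+1})$ spans, together with $p$, a triangle $T_i\in\cT_h$ whose vector edges are $(e_i,e_{i+1}-e_i,-e_{i+1})$. Lemma~\ref{lem:DescribeT}(ii) then supplies a sign $\ve\in\{-1,+1\}$ independent of $i$ with $\det(e_i,e_{i+1})=\ve$ (from the consistent orientation of the $T_i$), and Lemma~\ref{lem:DescribeT}(iii) applied at $z=p$ yields $\<e_i,Me_{i+1}\>\geq -\theta_h$.

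For the containment \eqref{VhSupset}, I feed the fan into Lemma~\ref{lem:AlmostAcuteNeigh}. Since $\det\Met(p)=1$, one has $\kappa(M)=\|M\|=\|M^{-1}\|\leq\kappaD$, so Lemma~\ref{lem:MinkowskiBounds} gives $\|e_i\|_M^2\geq\lambda_1(M)^2\geq\kappaD^{-1}$. Combined with $\theta_h\leq\tfrac{1}{4\kappaD}$ from \eqref{assum:h}, this yields $\<e_i,Me_{i+1}\>\geq-\theta_h>-\tfrac{1}{2}\min\{\|e_i\|_M^2,\|e_{i+1}\|_M^2\}$, verifying the hypothesis of Lemma~\ref{lem:AlmostAcuteNeigh}. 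Applying that lemma to the $M$-reduced basis $(e,f)$ and separately to $(-e,-f)$ (also $M$-reduced) places $\{e,f,-e,-f\}$ inside $V_h(p)$.

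For \eqref{VhSubset}, after possibly exchanging $f$ and $-f$ I arrange the cyclic order to be $(e,f,-e,-f)$ with $\det(e,f)=\ve$. Any fan vector $w$ strictly between $e$ and $f$ in this order writes $w=ae+bf$ with $a,b\geq 1$; the constraint $\det(e,w)=\ve$ on the first fan vector after $e$ forces $b=1$, so $w=ae+f$. Lemma~\ref{lem:DescribeT}(iii) applied to $(p,p+he,p+hw)$ yields $\<e,M(w-e)\>=(a-1)\|e\|_M^2+\<e,Mf\>\leq\theta_h$; if $a\geq 2$, the bound $\theta_h\leq\tfrac{1}{4}\|e\|_M^2$ would force $\<e,Mf\>\leq-\tfrac{3}{4}\|e\|_M^2$, contradicting $|\<e,Mf\>|\leq\tfrac{1}{2}\|e\|_M^2$ from Lemma~\ref{lem:IneqScal}. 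So $w=e+f$; the symmetric argument at the $f$ end (using the triangle $(p,p+hw,p+hf)$ and the pairing $\<f-w,-Mf\>\leq\theta_h$) pins the last fan vector between $e$ and $f$ to $e+f$ as well, leaving at most one extra element in this sector. Repeating the analysis in the three other sectors gives \eqref{VhSubset}.

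To finish with $V_h(p)=V(p)$ when $\mu(M)>\theta_h$: after normalizing $f$ so that $\<e,Mf\>=-\mu(M)\leq 0$, Corollary~\ref{corol:ObtuseSuperBaseFromReducedBasis2} gives $V(p)=\{\pm e,\pm f,\pm(e+f)\}$. Each of the four possible obstructions to equality — $e-f\in V_h(p)$, $f-e\in V_h(p)$, $e+f\notin V_h(p)$, $-e-f\notin V_h(p)$ — exhibits a triangle of $\cT_h$ whose Lemma~\ref{lem:DescribeT}(iii) bound reduces to $-\<e,Mf\>\leq\theta_h$, that is $\mu(M)\leq\theta_h$, contradicting the hypothesis: for instance $(p,p+he,p+h(e-f))$ with vector edges $(e,-f,f-e)$ if $e-f\in V_h(p)$, and $(p,p+he,p+hf)$ with vector edges $(e,f-e,-f)$ if $e+f\notin V_h(p)$ (noting that \eqref{VhSubset} then forces $e$ and $f$ to be consecutive in the fan). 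I expect \eqref{VhSubset} to be the main obstacle, since the determinant condition $|\det(e_i,e_{i+1})|=1$ by itself admits arbitrarily long Stern--Brocot-like chains of fan vectors in a single angular sector, and only the quantitative interplay of Lemmas~\ref{lem:DescribeT}(iii) and \ref{lem:IneqScal} via the standing bound $\theta_h\leq 1/(4\kappaD)$ trims the fan down to the claimed eight candidates.
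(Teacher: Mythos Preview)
Your argument is correct, but it takes a route that is almost the mirror image of the paper's. The paper works \emph{triangle by triangle}: for each $T\in\cT_h$ containing $p$, with edge vectors $(e_1,e_2,e_3)$ summing to zero, it applies Lemma~\ref{lem:AlmostAcuteNeigh} to the six-cycle $(e_1,-e_3,e_2,-e_1,e_3,-e_2)$ and concludes that $\{e,f\}\subset\{e_1,e_2,e_3\}\cup\{-e_1,-e_2,-e_3\}$, which immediately forces $\{e_1,e_2,e_3\}$ to be either $\{e,f,-e-f\}$ or $\{e,-f,f-e\}$; this yields \eqref{VhSubset} in one stroke and, when $\mu(M)>\theta_h$, rules out the second alternative. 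For \eqref{VhSupset} the paper then uses a short geometric argument: the triangle containing $p$ that meets the ray $\{p+re;\,r>0\}$ must carry $he$ as an edge issued from $p$. You do the opposite: you feed the \emph{whole fan} $(e_1,\ldots,e_n)$ into Lemma~\ref{lem:AlmostAcuteNeigh} to obtain \eqref{VhSupset} directly, and then recover \eqref{VhSubset} by a hands-on sector analysis, using the determinant constraint to pin the first and last fan vectors in each of the four angular sectors between $\pm e,\pm f$ to the same candidate $\pm e\pm f$, and Lemma~\ref{lem:IneqScal} together with $\theta_h\leq 1/(4\kappaD)\leq\lambda_1(M)^2/4$ to exclude anything farther out. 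Both routes are valid; the paper's triangle-local use of Lemma~\ref{lem:AlmostAcuteNeigh} is shorter and delivers \eqref{VhSubset} and the case $\mu(M)>\theta_h$ simultaneously, while your fan-global use of the same lemma gives \eqref{VhSupset} more cleanly than the paper's half-line argument but makes you pay for \eqref{VhSubset} with explicit casework.
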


\begin{proof}
We assume that $\<e,M f\> \leq 0$, up to replacing $f$ with $-f$. 
Let $T \in \cT_h$ be a triangle containing $p$, and let $h e_1, h e_2, h e_3$ be the edges of $T$,  oriented so that $e_1+e_2+e_3 = 0$.
Using point (iii) of Lemma \ref{lem:DescribeT}, and \eqref{KMK}, we obtain for all $1 \leq i \leq 3$, with the convention $e_4 := e_1$
\begin{equation}
\label{eq:eMett}
\<e_i, M e_{i+1} \> \leq \theta_h \leq \theta_0 < \frac 1 {2 \kappaD}\leq \frac 1 2 \min\{\|e_i\|_M^2, \|e_{i+1}\|_M^2\}.
\end{equation}
Denote $E := \{e_1,e_2,e_3\}$, and $-E := \{-e_1,-e_2,-e_3\}$.
Applying Lemma \ref{lem:AlmostAcuteNeigh} to $M$ and the points $(e_1,-e_3,e_2,\linebreak-e_1,e_3,-e_2)$, we obtain that $\{e,f\} \subset E \cup (-E)$. 
Up to exchanging $E$ with $-E$, we thus have $\{e,f\} \subset E$ or $\{e,-f\}\subset E$.
Since the elements of $E$ sum to zero, we conclude that 
\begin{equation}
\label{ECases}
E=\{e,f,-e-f\} \  \text{ or } \ E = \{e,-f,-e+f\},
\end{equation}
which implies \eqref{VhSubset}.

If $\mu(M)=|\<e,M f\>| > \theta_h$, then \eqref{eq:eMett} forbids the second case in \eqref{ECases}. Thus $E = \{e,f,-e-f\}$, and therefore $V_h(p) \subset V(p)$, using \eqref{VReducedBasis}. %Corollary \ref{corol:ReducedBasisInObtuseSuperBase}.

Let $T\in \cT_h$ be a triangle containing $p$ and intersecting the half line $L := \{p+r e;\, r>0\}$. We know \eqref{ECases} that $h e$ is a vector edge of $T$ (i.e.\ the difference between two vertices of $T$).  The corresponding edge segment must be $[p,p+he]$, since otherwise $T \cap L$ would be empty. Thus $e\in V_h(p)$. 
Applying the same argument to $-e,f,-f$, we obtain \eqref{VhSupset}.

If $\mu(M) > \theta_h$, then $h(e+f)$ is also a vector edge of any triangle $T \in \cT_h$ containing $p$, since we eliminated the second case in \eqref{ECases}. Reasoning as above we find that $\{e+f,-e-f\} \subset V_h(p)$, and therefore $V(p) \subset V_h(p)$. Thus $V(p) = V_h(p)$. This concludes the proof.
\QED
\end{proof}

We introduce new stencils $W(p),W'(p)$, for $p\in \R^2$, defined as follows. Let $M := \Met(p)$.
If $\mu(M) \leq \theta_0$, then denoting by $(e,f)$ an $M$-reduced basis,
\begin{align}
\label{def:W1}
W(p)  &:= \{e,f,-e,-f\}, \\ 
\label{def:W2}
W'(p) &:= \{e,f,e+f,e-f,\ -e,-f,-e-f,f-e\}.
\end{align}
On the other hand, if $\mu(M) > \theta_0$, then
\begin{equation}
\label{def:W3}
W(p) := V(p) =: W'(p).
\end{equation}
The previous lemma implies that $W(p) \subset V_h(p) \subset W'(p)$ for any $p \in h \Z^2$. %, since $\theta_h \leq \theta_0$ by assumption \eqref{assum:h}.

\begin{figure}
\centering
\includegraphics[width=8.5cm]{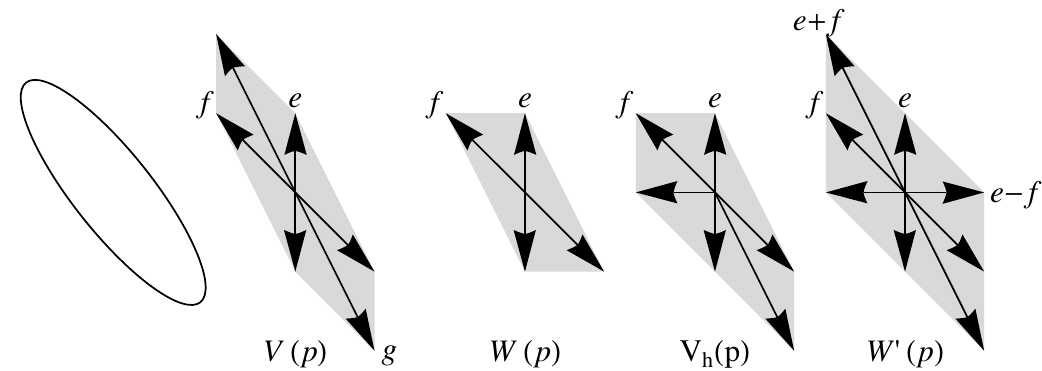} 
\caption{Consider a point $p\in h \Z^2$, and denote $M := \Met(p)$. From left to right: 
ellipse $\{\|z\|_M \leq 1\}$, AD-LBR stencil $V(p)$, stencils $W(p) \subset V_h(p) \subset W'(p)$. For $W(p)$ and $W'(p)$ we assumed that $\mu(M) < \theta_0$, otherwise they are equal to $V(p)$. 
Note that $V(p)$, $W(p)$, $W'(p)$ only depend on $M$, while $V_h(p)$ depends on the structure of the triangulation $\cT_h$.
}
\end{figure}

\begin{lemma}
The stencils $W(p)$, $W'(p)$, do not depend on the choice of $\Met(p)$-reduced basis. %, $M := \Met(p)$.
\end{lemma}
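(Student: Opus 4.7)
My plan is to split the argument according to whether $\mu(M) > \theta_0$ or $\mu(M) \leq \theta_0$, where $M := \Met(p)$ and $\theta_0 = 1/(4\kappaD)$. In the first case, definition \eqref{def:W3} gives $W(p) = V(p) = W'(p)$, so invariance reduces to that of the AD-LBR stencil $V(p)$. The weights $\gamma_z$ are invariant by Lemma \ref{lem:indepBasis}, and its proof moreover shows that when $\mu(M) > 0$ the $M$-obtuse superbase is unique up to sign, so $V(p)$ is itself well-defined as a set. Since $\mu(M) > \theta_0 > 0$ and since every $M$-reduced basis gives rise via Corollary \ref{corol:ObtuseSuperBaseFromReducedBasis2} to such a superbase, this case follows immediately.

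In the second case, I first derive $\lambda_1(M)^2 > 2\mu(M)$ from the normalization $\det\Met \equiv 1$: by \eqref{KMK}, every nonzero $e \in \Z^2$ satisfies $\|e\|_M \geq \kappaD^{-1/2}$, so $\lambda_1(M)^2 \geq \kappaD^{-1} = 4\theta_0 \geq 4\mu(M)$. This places us in the hypothesis of Lemma \ref{lem:uniqueness}(i) with $M' = M$ and $\tau = 1$, which yields $\{e',f'\} \subset \{e,f,-e,-f\}$ for any two $M$-reduced bases $(e,f)$ and $(e',f')$. By exchanging the roles of the two bases one obtains the reverse inclusion, hence $\{e,f,-e,-f\} = \{e',f',-e',-f'\}$, establishing the invariance of $W(p)$.

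To extend the argument to $W'(p)$, I only need to check that the four diagonal vectors $\pm(e+f), \pm(e-f)$ depend only on the unordered set $\{e,f,-e,-f\}$. Any two reduced bases must be related by a map of the form $(e,f) \mapsto (\varepsilon_1 e, \varepsilon_2 f)$ or $(e,f) \mapsto (\varepsilon_1 f, \varepsilon_2 e)$ with $\varepsilon_i \in \{-1,+1\}$, and one checks by direct inspection that any such map permutes the four diagonal vectors among themselves. This last combinatorial verification is the only point I expect to require real attention, though it is essentially immediate; everything else reduces to results already assembled in \S3.2.
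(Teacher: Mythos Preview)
Your proof is correct and follows essentially the same approach as the paper: the same case split on $\mu(M)$ versus $\theta_0$, and in the second case the same appeal to Lemma~\ref{lem:uniqueness}(i) via the bound $\lambda_1(M)^2 \geq \kappaD^{-1} = 4\theta_0 \geq 4\mu(M)$. Your treatment of the first case is more elaborate than necessary---since $V(p)$ in \eqref{def:W3} is not defined through a reduced basis, the paper simply notes there is nothing to prove---and your explicit combinatorial check for $W'(p)$ just spells out what the paper leaves to the reader.
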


\begin{proof} Let $M := \Met(p)$. If $\mu(M) > \theta_0$, then $W(p)$, $W'(p)$ are defined by \eqref{def:W3}, hence there is nothing to prove.
Otherwise we obtain $\mu(M) \leq \theta_0 \leq 1/(4 \kappaD) \leq \lambda_1(M)^2/4$. Hence, by Lemma \ref{lem:uniqueness}, any two $M$-reduced bases $(e,f)$, $(e',f')$, need to satisfy $\{e',f'\} \subset \{e,f, -e,-f\}$. In view of \eqref{def:W1} and \eqref{def:W2}, they thus yield the same stencils $W(p)$, $W'(p)$.
\QED
\end{proof}

Let $\cF_h, \cF'_h$ be the energies associated to the stencils $W,W'$: for $u\in L^2(\Omega_h)$, extended to $h\Z^2$ by periodicity,
\begin{align*}
\cF_h(u) &:= \sum_{z \in \Omega_h} \sum_{g \in W(z)} |u(z+h g)-u(z)|^2, \\
\cF'_h(u) &:= \sum_{z \in \Omega_h} \sum_{g \in W'(z)} |u(z+h g)-u(z)|^2.
\end{align*}

The outline of the proof of Theorem \ref{th:FE} is as follows. We prove in Lemmas \ref{lem:EEpFp}, \ref{lem:FpF} and \ref{lem:FE} respectively that for any $u \in L^2 (\Omega_h)$:
\begin{align}
\label{ineq1}
|\cE'_h(u) - \cE_h(u) | &\leq (\ve_h + C_0 \theta_h) \cF'_h(u)\\
%\label{ineq2}
\cF'_h(u) &\leq C_1 \cF_h(u)\\
\label{ineq3}
\cF_h(u) &\leq C_2 \cE_h(u),
\end{align}
where the constants $C_0,C_1,C_2$ only depend on the metric $\Met$. Combining these inequalities, and recalling that $\theta_h = \cO(h)$ and $\ve_h = \cO(h)$ for Lipschitz metrics ($\ve_h$ is defined in Lemma \ref{lem:GammaHApprox}), we obtain
\begin{equation*}
|\cE'_h(u) - \cE_h(u)| \leq c h \cE_h(u),
\end{equation*}
for some constant $c = c(\Met)$. This establishes \eqref{EnEpn}, and concludes the proof of Theorem \ref{th:FE}.\\

For each $p \in \R^2$, we denote by $\eta_p$, $\eta'_p : \Z^2 \to \{0,1\}$, the characteristic functions of $W(p)$ and $W'(p)$ respectively. The proofs of \eqref{ineq1} and \eqref{ineq3} immediately result from the comparison, in Lemmas \ref{lem:EEpFp} and \ref{lem:FE} respectively, of 
the coefficients $\gamma_p$, $\gamma^h_p$, $\eta_p$, $\eta'_p$ appearing in the expressions of $\cE_h, \cE'_h, \cF_h, \cF'_h$.

In the following, it will be convenient to express the AD-LBR weights, and others, in terms of the scalar product associated to the Riemannian metric. 
We thus recall \eqref{eq:DM}: for any $z \in \Omega$, and any $e,f \in \R^2$, 
\begin{equation*}
\<e^\perp, \Diff(z) f^\perp\> = \DDiff(z) \<e, \Met(z) f\>.
\end{equation*}
We also define the bounds ($0 < \DDMin \leq \DDMax < \infty$)
\begin{equation*}
\DDMin := \min_{z\in \Omega} \DDiff(z), \quad %\text{ and } \quad 
\DDMax := \max_{z\in\Omega} \DDiff(z).
\end{equation*}

%We thus recall that, for $M \in S_2^+$ and $D := M^{-1}$,
%\begin{equation}
%\< e^\perp, D f^\perp\> = \frac{\< e, M f\>}{\det M}.
%\end{equation}

\begin{lemma}
\label{lem:FE}
For any $p\in \R^2$, one has on $\Z^2$
\begin{equation*}
\eta_p \leq C_2 \gamma_p, \quad \text{ with } C_2 :=  2\DDMax/ \theta_0.
\end{equation*}
\end{lemma}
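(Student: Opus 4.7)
The inequality is vacuous whenever $g \notin W(p)$, since then $\eta_p(g)=0$ while $\gamma_p \geq 0$. Hence the task reduces to producing a uniform lower bound on $\gamma_p(g)$ for $g \in W(p)$. Using \eqref{eq:DM}, the plan begins by rewriting the AD-LBR weights in metric form: for any $\Met(p)$-obtuse superbase $(e_0,e_1,e_2)$ with $e_0+e_1+e_2=0$,
\begin{equation*}
\gamma_p(\pm e_i) \;=\; -\tfrac{1}{2}\,\DDiff(p)\,\langle e_{i+1},\,\Met(p)\,e_{i+2}\rangle.
\end{equation*}
Set $M := \Met(p)$. I split according to the dichotomy defining $W(p)$.

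First, when $\mu(M) > \theta_0$, one has $W(p)=V(p)$; after reordering via Corollary \ref{corol:ReducedBasisInObtuseSuperBase} so that $(e_0,e_1)$ is an $M$-reduced basis and $e_2 = -e_0-e_1$, a direct computation gives $\gamma_p(\pm e_2) = \tfrac{1}{2}\DDiff(p)\,\mu(M)$, while Corollary \ref{corol:ObtuseSuperBaseFromReducedBasis2} combined with Lemma \ref{lem:IneqScal} (which gives $\|e_j\|_M^2 \geq 2\mu(M)$) yields $\gamma_p(\pm e_i) \geq \tfrac{1}{2}\DDiff(p)\,\mu(M)$ for $i \in \{0,1\}$. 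All weights are thus bounded below by $\tfrac{1}{2}\DDiff(p)\,\theta_0$. Second, when $\mu(M) \leq \theta_0$, one has $W(p) = \{\pm e,\pm f\}$ with $(e,f)$ an $M$-reduced basis completed to the obtuse superbase $(e,f,-e-f)$; Corollary \ref{corol:ObtuseSuperBaseFromReducedBasis2} now directly gives $\gamma_p(\pm e), \gamma_p(\pm f) \geq \tfrac{1}{4}\DDiff(p)\,\lambda_1(M)^2$.

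The main obstacle is this second sub-case: since $\mu(M)$ is small it cannot by itself furnish the desired lower bound, and a lower bound on $\lambda_1(M)^2$ must be brought in. This is precisely where the normalization $\det M = 1$ pays off: it forces $\|M\|=\kappa(M)\leq \kappaD$, so Lemma \ref{lem:MinkowskiBounds} gives $\lambda_1(M)^2 \geq \|M\|^{-1} \geq \kappaD^{-1} = 4\theta_0$ by the definition $\theta_0 = 1/(4\kappaD)$. This upgrades the estimate to $\gamma_p(\pm e), \gamma_p(\pm f) \geq \DDiff(p)\,\theta_0$.

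Combining the two regimes yields the uniform estimate $\gamma_p(g) \geq \tfrac{1}{2}\DDiff(p)\,\theta_0$ for every $g \in W(p)$. Lower-bounding $\DDiff(p)$ by $\DDMin$ (or, in the upper-bound spirit of the target constant, rescaling against $\DDMax$) and rearranging gives the announced inequality $\eta_p(g) \leq C_2 \gamma_p(g)$, noting that off $W(p)$ the left-hand side vanishes.
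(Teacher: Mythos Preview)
Your argument is correct and follows the same route as the paper: reduce to a uniform lower bound on $\gamma_p$ over $W(p)$, split on $\mu(M) \lessgtr \theta_0$, invoke Corollary~\ref{corol:ObtuseSuperBaseFromReducedBasis2} for the reduced-basis directions $\pm e,\pm f$, use the identity $\gamma_p(\pm g)=\tfrac12\DDiff(p)\mu(M)$ for the remaining direction, and close with $\lambda_1(M)^2\geq \kappaD^{-1}=4\theta_0$. The only organizational difference is that the paper bounds $\gamma_p(\pm e),\gamma_p(\pm f)$ once via $\|e\|_M^2\geq 4\theta_0$ before splitting cases, whereas in your first case you route through $\|e_j\|_M^2\geq 2\mu(M)$; both land on the same estimate.

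Your hesitation in the last sentence is justified: the paper's displayed inequality has $\DDiff(p)$ on the wrong side (it writes $2\DDiff(p)\gamma_p(\pm f)\geq\tfrac12\|e\|_M^2$, whereas the identity $\gamma_p(\pm f)=-\tfrac12\DDiff(p)\langle e,Mg\rangle$ together with \eqref{NegativeScalProd} actually gives $2\gamma_p(\pm f)/\DDiff(p)\geq\tfrac12\|e\|_M^2$). Consequently the constant should read $C_2=2/(\DDMin\theta_0)$ rather than $2\DDMax/\theta_0$. Since Theorem~\ref{th:FE} only needs \emph{some} constant depending on $\Diff$ in \eqref{ineq3}, this is a harmless typo; your bound $\gamma_p\geq\tfrac12\DDMin\theta_0$ on $W(p)$ is exactly what the argument requires.
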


\begin{proof}
Let $M := \Met(p)$, and let $(e,f,g)$ be an $M$-obtuse superbase of $\Z^2$. We can assume, thanks to Corollary \ref{corol:ReducedBasisInObtuseSuperBase}, that $(e,f)$ is an $M$-reduced basis.
%The stencil $V(p) = \{e,f,g,-e,-f,-g\}$, where $(e,f)$ is an $M$-reduced basis such that $\<e,Mf\> \leq 0$, and $g:=-e-f$, see \eqref{VReducedBasis}. 
%It follows from \eqref{NegativeScalProd} that 
Then using \eqref{NegativeScalProd}
\begin{align*}
2 \DDiff(p) \gamma_p(\pm f) &\geq \frac 1 2 \|e\|_M^2 \geq \frac 1 {2 \kappaD} = 2\theta_0,
\end{align*}
hence $\gamma_p(\pm f) \geq \theta_0/\DDMax$, and likewise $\gamma_p(\pm e) \geq \theta_0/\DDMax$.
If $\mu(M) \leq \theta_0 $, then $W(p) = \{e,f,-e,-f\}$, and this concludes the proof.

Assume now that $\mu(M) > \theta_0$. %Assume also that $\<e,M f\> \leq 0$, and denote $g := -e-f$. 
%We have $W(p) = \{e,f,g,-e,-f,-g\}$, and we obtain as announced
Then
\begin{equation*}
2 \DDiff(p) \gamma_p(\pm g) = -\<e,M f\> = \mu(M) \geq \theta_0,
\end{equation*} 
hence $\gamma_p(\pm g) \geq \theta_0/(2 \DDMax)$. The result follows since $W(p) = \{e,f,g,-e,-f,-g\}$.
\QED
\end{proof}

Let $p\in h \Z^2$ and let $e_1,\cdots, e_k$ be the consecutive elements of $V_h(p)$, in trigonometric order. We define for all $1 \leq i \leq k$, denoting $M:=\Met(p)$,
\begin{equation*}
\tilde \gamma_p^h(e_i) := - \frac {\DDiff(p)} 4 (\<e_i-e_{i-1},\, M e_{i-1}\> + \<e_i-e_{i+1},\, M e_{i+1}\>),
\end{equation*}
with the periodic conventions $e_{k+1} := e_1$, $e_0 := e_k$. We also set $\tilde \gamma_p^h = 0$ on $\Z^2 \sm \{e_1, \cdots, e_k\}$.
%We introduce slightly modified weights d

\begin{lemma}
\label{lem:EEpFp}
For any $p \in h \Z^2$, one has on $\Z^2$
\begin{equation}
\label{GammaHGamma}
| \gamma^h_p - \tilde \gamma^h_p| \leq \ve_h \eta'_p, \quad \text{ and } \quad |\tilde \gamma^h_p - \gamma_p| \leq C_0 \theta_h \eta'_p,
%|\gamma^h_p - \gamma_p| \leq C_0 (\theta_h+\ve_h)\eta'_p,
\end{equation}
where $\ve_h$ is given in Lemma \ref{lem:GammaHApprox}, and $C_0 = 1/\DDMin$. %, and $C_0^{-1} := \linebreak\min \{\det \Met(q); \, q \in \Omega\}$.
\end{lemma}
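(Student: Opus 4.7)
The proof will split naturally into the two inequalities. Both hinge on the same preliminary identification: for $p \in h\Z^2$ and $e_i \in V_h(p)$, the two triangles of $\cT_h$ sharing the edge $[p,p+he_i]$ have as third vertex (other than $p$ and $p+he_i$) the neighbours $p+he_{i+1}$ and $p+he_{i-1}$, where $e_{i-1},e_{i+1}$ are the trigonometric neighbours of $e_i$ in $V_h(p)$. In the notation of Lemma \ref{lem:GammaHApprox}, this corresponds to the edge vectors $(e,f,g)=(e_i,\,e_{i+1}-e_i,\,-e_{i+1})$ for $T$ and $(e_i,\,e_{i-1}-e_i,\,-e_{i-1})$ for $T'$.

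For the first inequality, I would substitute these identifications into Lemma \ref{lem:GammaHApprox} and convert the $\Diff(p)$-perp products into $\Met(p)$-scalar products via \eqref{eq:DM}. A short algebraic expansion recognises the main term as exactly $\tilde\gamma^h_p(e_i)$, leaving the announced error $\ve_h$. Both $\gamma^h_p$ and $\tilde\gamma^h_p$ vanish outside $V_h(p) \subset W'(p)$, so the pointwise $\eta'_p$-dominated bound follows.

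For the second inequality, I would split on the dichotomy from Lemma \ref{lem:EqualUnlessAlmostOrthogonal}. In the \emph{generic} case $\mu(\Met(p))>\theta_h$, one has $V_h(p)=V(p)=\{\pm e,\pm f,\pm(e+f)\}$ with $(e,f)$ an $M$-reduced basis satisfying $\<e,Mf\>\leq 0$, and the trigonometric ordering is forced to be $e,e+f,f,-e,-e-f,-f$. A direct expansion of $\tilde\gamma^h_p$ on this ordering, compared with the closed-form $\gamma_p$ from Lemma \ref{lem:Sum}, shows that $\tilde\gamma^h_p=\gamma_p$ \emph{identically}; no $\theta_h$ term appears. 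In the \emph{borderline} case $\mu(\Met(p))\leq\theta_h$, the stencil $V_h(p)$ may be the four-element minimum $\{\pm e,\pm f\}$, one of the six-element supersets $\{\pm e,\pm f,\pm(e\pm f)\}$, or the full eight-element set prescribed by Lemma \ref{lem:EqualUnlessAlmostOrthogonal}. In each subcase I would compute $\tilde\gamma^h_p$ from the appropriate trigonometric ordering and compare with the unchanged $\gamma_p$ supported on $V(p)$, whose weight on $\pm(e+f)$ equals $\DDiff(p)\mu(M)/2$. In every subcase the pointwise discrepancy on $W'(p)$ takes the form $c\,\DDiff(p)\mu(M)$ with $|c|\leq 1$, and is therefore bounded by a constant multiple of $\theta_h$, yielding the inequality.

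The main obstacle is the combinatorial bookkeeping of the borderline case: one has to track, for each admissible shape of $V_h(p)$, both the trigonometric ordering of its elements and which elements of $W'(p)$ the weights $\tilde\gamma^h_p$ and $\gamma_p$ assign non-zero values to. The generic case is what makes the equality $\tilde\gamma^h_p=\gamma_p$ exact, reflecting the algebraic equivalence between a finite-element assembly on an acute Delaunay triangle and the AD-LBR lattice decomposition \eqref{DSum0}; the borderline case is essentially a perturbation of this identity, quantified precisely by the smallness of $\mu(\Met(p))\leq\theta_h$.
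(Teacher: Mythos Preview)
Your overall strategy matches the paper's proof: the first inequality is a direct application of Lemma~\ref{lem:GammaHApprox} once the triangle edges are identified with $(e_i,e_{i+1}-e_i,-e_{i+1})$, and the second inequality splits on $\mu(\Met(p))\gtrless\theta_h$ exactly as the paper does, with the generic case giving $\tilde\gamma^h_p=\gamma_p$ identically.

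There is, however, a gap in your borderline case. You list as the possible shapes of $V_h(p)$ only the \emph{centrally symmetric} subsets of $W'(p)$ (cardinality $4$, $6$, or $8$), but nothing in Lemma~\ref{lem:EqualUnlessAlmostOrthogonal} forces central symmetry. The triangulation $\cT_h$ is obtained by \emph{arbitrarily} triangulating the polygons of $\cQ_h$, and different quadrilaterals incident to $p$ may be cut along different diagonals; thus e.g.\ $V_h(p)=\{e,e+f,f,-e,-f\}$ (five elements, $e+f$ present but $-e-f$ absent) is admissible. Your case analysis would need to cover these asymmetric configurations as well.

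The paper sidesteps this enumeration entirely. Instead of cataloguing global shapes of $V_h(p)$, it argues element by element: for a given $v\in\{e,f,-e,-f\}$, the trigonometric neighbours of $v$ in $V_h(p)$ can each take only two values (e.g.\ the forward neighbour of $e$ is $f$ or $e+f$, the backward one is $-f$ or $e-f$), and in every one of the $2\times 2$ combinations the resulting $\tilde\gamma^h_p(v)$ differs from $\gamma_p(v)$ by at most a fixed multiple of $\DDiff(p)\,\mu(M)$. A similar two-option argument handles $v\in\{\pm(e+f),\pm(e-f)\}$ (these may or may not belong to $V_h(p)$). This local neighbour-by-neighbour computation covers all asymmetric configurations simultaneously and is both shorter and safer than an exhaustive shape enumeration.
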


\begin{proof}
The coefficients $\gamma_p$, $\gamma^h_p$, $\tilde\gamma^h_p$, are all equal to zero outside of $W'(p)$. This holds by construction of $\gamma_p$, and by Lemma \ref{lem:EqualUnlessAlmostOrthogonal} for $\gamma^h_p$, $\tilde\gamma^h_p$. We may therefore forget about the presence of $\eta'_p$ in \eqref{GammaHGamma}.

First inequality. 
Lemma \ref{lem:GammaHApprox} states that $| \gamma^h_p - \tilde \gamma^h_p| \leq \ve_h$ on $\Z^2$, 
which concludes the proof.

Second inequality.
If $\mu(M) > \theta_h$, then $V_h(p) = V(p)$. Comparing the definition of $\tilde \gamma^h_p$ with that of $\gamma_p$ \eqref{def:GammaZ} we observe that $\tilde\gamma^h_p = \gamma_p$ on $\Z^2$, which concludes the proof in this case. 

Assume now that $\mu(M) \leq \theta_h$. Let $(e,f,g)$ be an $M$-obtuse superbase of $\Z^2$. We can assume, thanks to Corollary \ref{corol:ReducedBasisInObtuseSuperBase} that $(e,f)$ is an $M$-reduced basis. Looking at \eqref{def:GammaZ} and denoting $\delta := 2 \DDiff(p)$,
we find that 
\begin{equation*}
|\delta \gamma_p(\pm e) - \|f\|_M^2| = | \<e,M f\>| = \mu(M) \leq \theta_h. 
\end{equation*}
Likewise  $|\delta \gamma_p(\pm f) - \|e\|_M^2| \leq \theta_h$.
%Assuming that $\<e,M f\> \leq 0$, we obtain in addition
In addition
\begin{equation*}
\delta \gamma_p(\pm (e+f)) = \mu(M) \leq \theta_h, \text{ and } \gamma_p(\pm (e-f)) = 0.
\end{equation*}
%We assume that $\<e, M f\> \leq 0$, and denote $g := -e-f$.  We obtain 
%$|\delta \gamma(\pm g)| = \mu(M) \leq \theta_h$.

Combining the definition of $\tilde \gamma^h_p$ with the description of the stencil $V_h(p)$ in Lemma \ref{lem:EqualUnlessAlmostOrthogonal},
we obtain that
\begin{equation*}
2 \delta \,\tilde \gamma^h_p(e) =
\left\{ 
\begin{array}{c}
\<f-e, M f\>\\
\text{or}\\
\<f+e, M f\>
\end{array}
\right\}
+
\left\{ 
\begin{array}{c}
\<f-e, M f\>\\
\text{or}\\
\<f+e, M f\>
\end{array}
\right\}.
\end{equation*}
In any case $|\delta \, \tilde \gamma^h_p(e) - \|f\|_M^2| \leq \theta_h$. 
The expressions and estimates of $\tilde \gamma^h_p$ at the points $-e,f,-f$ are obtained similarly.
%Likewise $|\delta \, \tilde \gamma^h_p(f) - \|e\|_M^2| \leq \theta_h$, $|\delta \, \tilde \gamma^h_p(e+f)| \leq \theta_h$ and $|\delta \, \tilde \gamma^h_p(e-f)| \leq \theta_h$
Likewise, using Lemma \ref{lem:EqualUnlessAlmostOrthogonal},
\begin{equation*}
2 \delta \,\tilde \gamma^h_p(e+f) =
\left\{
\begin{array}{ll}
\<e, M f\> + \<e, M f\> &\text{ if } e+f \in V_h(p),\\
0 & \text{ otherwise.}
\end{array}
\right.
\end{equation*}
In any case $| \delta \,\tilde \gamma^h_p(e+f)| \leq \theta_h$. The expressions and estimates of $\tilde \gamma_p^h$ at the points $-(e+f), e-f, -(e-f)$ are similar. 
Comparing the above estimates of $\gamma_p$, $\tilde \gamma^h_p$, 
we obtain that $\delta |\gamma_p - \tilde \gamma^h_p| \leq 2 \theta_h$
on $\{e,f,e+f,e-f, \linebreak -e,-f,-e-f,f-e\}=W'(p)$. 
%This concludes the proof.
Since $\delta = 2 \DDiff(p) \geq 2 \DDMin = 2/ C_0$, this concludes the proof.
%Observing that $\delta =  2 \det(M) \geq 2C_0^{-1}$, we conclude the proof.
%Comparing them with those of $\gamma_p$, we obtain that $\delta | |$, which concludes the pand observing that $\gamma_p$ and $\tilde \gamma^h_p$ vanish elsewhere on $\Z^2$, we conclude the proof.
\QED
\end{proof}

%We define for each $p\in h \Z^2$ the stencil 
%\begin{equation}
%\tilde W_h(p) := \{e + f \in \Z^2;\, e \in W(p),\, f \in W(p+h e)\},
%\end{equation}
%and we denote by $\eta^h_p$ its characteristic function.
%We also define for $q \in h \Z^2$
%\begin{equation}
%n_h(q) := 
%%\# \{(q,e);\, q\in h \Z^2, e \in W(q), p = q+e\}.
%\# \{p\in h \Z^2; \, \exists e \in W(p), \, q = p+e\}.
%\end{equation}

%
%\begin{lemma}
%\label{lem:FpF}
%For all $p \in \R^2$ one has 
%\begin{equation*}
%\eta'_p \leq \eta^2_p, %\text{ and } \eta^2_p \leq C_1 \eta_p,
%\end{equation*}
%with $C_1 := 17$.
%\end{lemma}
%
%\begin{proof}
%\QED
%\end{proof}

In the last lemma of this section, we control the contribution to the energy $\cF'_h$ of a stencil $W'(p)$, $p\in h \Z^2$, in terms of the contributions to $\cF_h$ of $W(p)$ and of the neighboring stencils $W(p+he)$, $e \in W(p)$. This leads to an estimate of $\cF'_h$ in terms of $\cF_h$, which concludes the proof of Theorem \ref{th:FE}.
\begin{lemma}
\label{lem:FpF}
One has $\cF'_h(u) \leq C_1 \cF_h(u)$, for any $u \in L^2(\Omega_h)$, with $C_1 := 17$.
\end{lemma}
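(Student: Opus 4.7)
The plan is to reduce each difference appearing in $\cF'_h$ but not in $\cF_h$ to a sum of two short differences that do belong to $\cF_h$, and then bound multiplicities.

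At any $p\in h\Z^2$ with $\mu(\Met(p))>\theta_0$ one has $W(p)=W'(p)$ by \eqref{def:W3}, so these points contribute identically to $\cF_h$ and $\cF'_h$. I therefore only need to handle points with $\mu(\Met(p))\leq\theta_0$, where $W(p)=\{\pm e,\pm f\}$ for an $\Met(p)$-reduced basis $(e,f)$ and $W'(p)\setminus W(p)=\{\pm(e+f),\pm(e-f)\}$. For each such $g$, I fix a splitting $g=e_1+e_2$ with $e_1,e_2\in W(p)$; the symmetric choice
\begin{align*}
e+f&=e+f, & -(e+f)&=(-e)+(-f),\\
e-f&=(-f)+e, & -(e-f)&=f+(-e),
\end{align*}
has the property that each element of $W(p)$ appears exactly once as a first summand. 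The elementary inequality $(a+b)^2\leq 2a^2+2b^2$ then yields
\[
|u(p+hg)-u(p)|^2\leq 2|u(p+he_1)-u(p)|^2+2|u(p+he_1+he_2)-u(p+he_1)|^2.
\]

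The first right-hand term is the $(p,e_1)$-contribution to $\cF_h(u)$, and the second is a $(p+he_1,e_2)$-contribution provided $e_2\in W(p+he_1)$. Lemma~\ref{lem:uniqueness} guarantees this: $\|e_1\|\leq\kappaD$ gives $\tau(p,p+he_1)\leq\tau_h$; the inequalities $\mu(\Met(p))\leq\theta_0=1/(4\kappaD)\leq\lambda_1(\Met(p))^2/4$ (using Lemma~\ref{lem:MinkowskiBounds} together with $\det\Met(p)=1$) and $\tau_h^4\leq 1+1/(3\kappaD^2)$ from \eqref{assum:h} allow applying conclusion (ii) of Lemma~\ref{lem:uniqueness}, yielding $\{\pm e,\pm f\}\subset W(p+he_1)$ whether the latter stencil has four or six elements.

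Summing the pointwise inequality over all $(p,g)$ with $g\in W'(p)\setminus W(p)$ and regrouping by the target pair $(q,g')\in h\Z^2\times W(q)$, each term of $\cF_h$ receives three sorts of contributions: the direct inclusion $W\subset W'$; an ``$A$-contribution'' $(p,e_1)=(q,g')$, which, thanks to the symmetric splitting, arises for at most one $g$; and a ``$B$-contribution'' $(p+he_1,e_2)=(q,g')$, which arises for a bounded number of neighbors $p=q-he_1$. Lemma~\ref{lem:uniqueness} controls how the reduced bases at $p$ and $q$ match; in particular, when $q$ carries the six-element stencil but $p$ the four-element one, the extra direction $g'=\pm(e(q)+f(q))\in W(q)$ simply lies outside $W(p)$ and receives no $B$-contribution from $p$. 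The principal obstacle lies precisely in this multiplicity bookkeeping across the 4/6-element transitions; once each multiplicity is shown bounded by a universal constant, the estimate $\cF'_h\leq 17\,\cF_h$ follows at once, with the constant $17$ comfortably accommodating the worst case.
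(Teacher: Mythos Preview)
Your approach is essentially the paper's: the same splitting $g=e_1+e_2$, the same elementary inequality, and the same appeal to Lemma~\ref{lem:uniqueness} (ii) to ensure that the second difference lands in $\cF_h$. The only substantive difference is in the bookkeeping, which you leave incomplete: you flag the ``multiplicity across 4/6-element transitions'' as the principal obstacle, assert that the multiplicities are universally bounded, and then state that $17$ ``comfortably'' suffices without computing it.

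The paper avoids your per-pair $(q,g')$ accounting by aggregating coarsely at the level of $\cF_h(u;q)$. It bounds
\[
\cF'_h(u;p)-\cF_h(u;p)\ \leq\ 4\,\cF_h(u;p)\ +\ 2\sum_{g\in W(p)}\cF_h(u;p+hg),
\]
and then uses the single observation (your \eqref{fWpp}) that $e_1=h^{-1}(p'-p)\in W(p')$: hence for fixed $p'$ there are at most $\#W(p')\leq 6$ neighbors $p$ whose $\cG_h(u;p)$ contains $\cF_h(u;p')$. Summing gives $\cF'_h-\cF_h\leq(4+2\cdot 6)\cF_h$, i.e.\ $C_1=17$. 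This aggregated count is cruder than what your symmetric splitting could in principle deliver, but it produces the constant in one line and bypasses entirely the case analysis of which $g'\in W(q)$ receives contributions from which neighbors --- the very obstacle you identify but do not resolve.
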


\begin{proof}
Consider a grid point $p\in h \Z^2$, and denote $M := \Met(p)$. Assume first that $\mu(M) \leq \theta_0$, so that $W(p) \subsetneq W'(p)$.
Consider also an arbitrary $g\in W'(p)\sm W(p)$, and observe that $g= e+f$ for some $M$-reduced basis $(e,f)$. 

We set $p' := p+e$ and $M' := \Met(p')$. Applying point (ii) of Lemma \ref{lem:uniqueness}, we find that $(e,f)$ is also an $M'$-reduced basis. Indeed we have as required
\begin{equation*}
4 \mu(M) \leq 4\theta_0 = \kappaD^{-1} \leq \lambda_1(M)^2,
\end{equation*}
using \eqref{KMK},
and the assumption on $\tau$ follows from \eqref{assum:h} and \eqref{def:tauM}.
Therefore
\begin{equation}
\label{fWpp}
f \in W(p'), \text{ and } h^{-1} (p'-p) =e \in W(p').
\end{equation}
 We obtain %, for any $u : h \Z^2 \to \R$
 \begin{align}
 \label{ineq:upg}
&|u(p+g)-u(p)|^2 \\
\nonumber
&= |u(p+e+f) -u(p)|^2\\
\nonumber
&\leq 2 ( |u(p+e+f)-u(p+e)|^2 +|u(p+e)-u(p)|^2)\\
\nonumber
&= 2 (|u(p'+f)-u(p')|^2 + |u(p+e)-u(p)|^2).
 \end{align}
 
Denote, for all $q \in h\Z^2$,
\begin{align*}
\cF_h(u; q) &:=\sum_{g \in W(q)} |u(q+h g)-u(q)|^2, \\ 
\cF'_h(u; q) &:= \sum_{g \in W'(q)} |u(q+h g)-u(q)|^2.
\end{align*}
Using \eqref{ineq:upg}, we obtain
\begin{equation}
\label{FFpG}
\cF'_h(u; p) - \cF_h(u;  p) 
\leq \cG_h(u;  p)
\end{equation}
where $\cG_h(u;  p)$ is given by 
\begin{equation*}
\left\{
\begin{array}{ll}
\displaystyle
4 \cF_h(u;  p)+ 2 \sum_{g\in W(p)} \cF_h(u;  p+g), & \text{ if } \mu(\Met(p)) \leq \theta_0\\%W(z) \neq W'(z)\\
0,  & \text{ if }  \mu(\Met(p)) > \theta_0%\text{ otherwise}.
\end{array}
\right.
\end{equation*}
When $\cF_h(u;p')$ appears in $\cG_h(u; p)$, with $p,p' \in h \Z^2$, $p\neq p'$, we have $h^{-1}(p'-p) \in W(p')$, see \eqref{fWpp}. For each $p'\in h \Z^2$, there are thus at most $\#(W(p')) \leq 6$ points $p \in h\Z^2\sm \{p'\}$ such that $\cF_h(u;p')$ appears in $\cG_h(u; p)$. 
Summing \iref{FFpG} over $p\in \Omega_h$, we thus obtain $\cF'_h(u) -\cF_h(u) \leq (4+2\times 6) \cF_h(u)$ (the constant could easily be improved), which concludes the proof. 
\QED\end{proof}

\section{Numerical experiments}
\label{sec:num}

%%%%
%\section{Accuracy test}

We compare our scheme AD-LBR with a family of other schemes: finite difference, finite elements, and two sche\-mes from the image processing literature. We begin with a quantitative comparison for the discretization of the restoration equation, in a synthetic case where the exact solution is analytically available for reference. The second test case is a qualitative comparison of Coherence-Enhancing Diffusion (CED) \cite{W98}, on a real image and the quality assessment is by visual inspection. Finally we present a 3D implementation of AD-LBR for proof of feasibility, featuring a synthetic CED experiment, and a application of Edge Enhancing Diffusion to MRI data.
%We first describe the different schemes used, and then present the results of the comparisons.

%%%%%%%%%%%%%%%%%
\subsection{The different schemes}
\label{sec:Schemes}
%Our two dimensional qualitative and quantitative 
%The following six numerical schemes for anisotropic diffusion were implemented.
Our two dimensional numerical experiments feature the following six numerical schemes for anisotropic diffusion.

\begin{enumerate}
\item[\ding{227}] AD-LBR: the scheme presented in this work.

\item[\ding{227}] 
Finite Differences (FD). The gradient and the divergence are discretized using standard centered finite differences \cite{Mitchell80}, see Remark \ref{rem:FD} for details. This approach, arguably the most straightforward, leads to a 9 point stencil.

\item[\ding{227}] Bilinear Finite Elements (Q1). Bilinear finite elements, also referred to as Q1 finite elements, are linear with respect to each space direction. This amounts to use a 9 points stencil, where the coefficients are different from the previous scheme.

\item[\ding{227}] Weickert-Scharr scheme (WS). 
This scheme, introduced in \cite{WS02}, is based on a second order approximation of the gradient using a $3 \times 3$ centered stencil. As a result, it offers good accuracy and rotation invariance when applied to sufficiently smooth functions, but lacks robustness guarantees such as the discrete maximum principle and spectral correctness (see \S \ref{sec:intro}), even for $\Diff = \Id$. The stencil for this scheme has size $5\times5$.

\item[\ding{227}] Weickert's Non-Negative scheme (W-NN). The coefficients of this scheme, detailed in \cite{W98} page 95, are non-negative as long as the anisotropy ratio \eqref{def:Kappa} satisfies %$\kappa^2\le 3+2\sqrt{2}$, i.e. 
$\kappa\le1+\sqrt{2}\sim 2.41$.
% justifier ?

\item[\ding{227}] Axes-directed Non-Negative scheme (\WG). This six point non-negative scheme is implicitly defined in the proof Theorem 6 in \cite{W98}, and can be regarded as a generalisation of W-NN. See Remark \ref{rem:WG} below for details. Among the 6 points of the stencil, 4 points are along the axes of coordinates. 
\end{enumerate}
%(*renumeroter les schemas*)

Note that other schemes exist, see for instance \cite{MN01,WS08}. While an exhaustive comparison is in principle desirable, it could not be done here due to time and space constraints.

To fix the ideas and illustrate the difference between the schemes, we propose to compute the stencil and the coefficients for different \emph{constant} diffusion tensors $\Diff$, in isotropic and anisotropic cases. 
Denoting by $R$ the matrix of rotation by the angle $\theta=\pi/6$, and by $\kappa\geq 1$ the chosen anisotropy ratio, we set, identically on $\R^2$:
\begin{equation}
\label{def:DTest}
%\displaystyle
\Diff := R 
\left(
\begin{array}{cc}
1 & 0\\
0 & \kappa^{-2} 
\end{array}
\right)
R^\trans.
%,
%\quad
%\text{where }
%R := 
%\left(
%\begin{array}{cc}
%\cos \theta & -\sin \theta\\
% \sin \theta & \cos \theta
%\end{array}
%\right).
\end{equation}
The results are presented in Tables \ref{tab:coeff1} and \ref{tab:coeff2}.
Note that for the two last cases (anisotropy $\kappa=\sqrt{10}$ and $\kappa=\sqrt{50}$) the AD-LBR stencil contains points that are outside the $3\times 3$ neighborhood of the pixel. However the stencil contains 6 points, as expected. This contrasts with the schemes FD, Q1, W-NN where only the $3\times 3$  neighborhood is involved.   Another observation is that the off-center stencil coefficients of the AD-LBR are non-positive (this gives non-negative off-diagonal coefficients for $\diver(\DD\nabla)$), in contrast with schemes FD, Q1, WS, and with scheme W-NN for anisotropy $\kappa > 1+\sqrt 2$. This is an essential property of AD-LBR (and \WG), and as a consequence our scheme satisfies, unconditionally, the discrete maximum  principle \cite{Al93,DA07}. 
% which is a desirable property of scale spaces.
%The operator norm of the discrete operator $-\diver(\DD \nabla)$, which is also its largest eigenvalue, is given in tables \ref{tab:ev1} and \ref{tab:ev2} for the different schemes. 
%This eigenvalue dictates the largest time step that can be used in an explicit time discretization of Equation \eqref{eq:parabolique}, see Remark \ref{rem:splitting}. T show that this eigenvalue is in many cases smaller for the AD-LBR than for the other schemes, which provides the opportunity to use larger time steps than for the other schemes, and therefore reduce CPU usage.
%It turns out that, in most cases

The largest eigenvalue of the discrete operator\linebreak $-\diver(\DD \nabla)$ is given in Table \ref{tab:ev1}, for the different schemes. It turns out that AD-LBR has in most cases the smallest eigenvalues among all schemes, except for scheme WS and occasionally \WG. This property allows (although this was not done in our numerical experiments) to use larger time steps for AD-LBR than for the other schemes, when solving parabolic equations \iref{pde} or \eqref{eq:parabolique} with an explicit time discretization.

Operator splitting is a classical approach to further increase the timestep in (potentially anisotropic and non-linear) diffusion PDEs \cite{W98,WRV98,BSIK03}. The AD-LBR is compatible with Additive Operator Splitting, by applying Remark (e) page 111 in \cite{W98}, although the efficiency of this technique is here compromised by the potentially large number of directions in our adaptive stencils. Let us also mention Multiplicative Operator Splittings, and Additive-Multiplicative Operator Splittings, which allow to combine different time-steps \cite{BSIK03,SJA10}. None of these methods was used in our experiments.

\begin{remark}[Axes-directed non negative six point scheme]
\label{rem:WG}
The following six point scheme \WG~is, in our belief, the best possible implementation of the constructive proof in \cite{W98} of the existence of non-negative schemes for two dimensional anisotropic diffusion. Like AD-LBR, this scheme is defined by the data at each point $z \in \Omega$ of a stencil $V(z)$, and of non-negative weights $\gamma_z$.

Let 
$\Diff(z) = \left(
\begin{array}{cc}
a & b \\ 
b & c
\end{array}
\right).$ In the diagonal case $b=0$, the scheme \WG~relies on the classical four points stencil. Otherwise note that 
\begin{equation*}
\frac a {|b|} - \frac {|b|} c = \frac {ac-b^2} {|b| c}> 0.
\end{equation*}
Let $p,q \in \Z\sm \{0\}$ be such that $b p q\geq 0$,
\begin{equation}
\label{ineqPQ}
\frac {|b|} c \leq \left | \frac p q \right| \leq \frac a {|b|},
\end{equation} 
and $\max(|p|,|q|)$ is minimal.  The scheme \WG~is defined by the six point stencil
\begin{equation*}
V(z) := \{(\pm 1,0), (0,\pm 1), \pm (p,q)\}
\end{equation*}
and the non-negative weights
\begin{align*}
2 \gamma(\pm 1,0) &= a-\frac p q b,&
2 \gamma(0,\pm 1) &= c-\frac q p b, \\
2 \gamma(\pm (p,q)) &= \frac b {p q}. 
\end{align*}
These coefficients are non-negative by construction, and consistency \eqref{DSum} is easily checked. 
Contrary to AD-LBR, the coordinate axes play a privileged role in A-NN. This introduces axis aligned artifacts which are visible in Figure \ref{fig:fingerprintzoom} (g).
%$2 \gamma(\pm 1,0) = a-\frac p q b$, \ $2 \gamma(0,\pm 1) = c-\frac q p b$, \ $2 \gamma(\pm (p,q)) = \frac b {p q}$. 
%If $b=0$, then $V(z)$  is the standard four point stencil%
%, with weights 
%%: $V(z) := \{(\pm 1, 0), (0,\pm 1)\}$, 
%$2\gamma(\pm 1, 0) := a$, \ $2\gamma(0,\pm 1) := c$. Otherwise let $p,q \in \Z\sm \{0\}$ be such that
%\begin{equation*}
%\frac {|b|} c \leq \left | \frac p q \right| \leq \frac a {|b|},
%\end{equation*} 
%and $\max(|p|,|q|)$ is minimal. 
%, we believe, the best possible implementation of the proof of existence \cite{W98}.
\end{remark}

\begin{remark}[Stencil radius]
%The two dimensional stencils o
The two dimensional stencils of AD-LBR coincide with those of FM-LBR, a numerical scheme for anisotropic static Hamilton-Jacobi PDEs introduced in \cite{M12} the second author. As shown in Proposition 1.6 of \cite{M12}, the euclidean radius 
\begin{equation*}
r=\max\{\|v\|; v \in V(z)\}
\end{equation*}
 of this stencil is bounded by $\kappa(\Diff(z))$. %, $D := \Diff(z)$.
%Note on average ?

In contrast, consider for $0 < \ve < 1/4$ the matrix %diffusion tensor %it is not hard to check with the example
\begin{equation*}
D := \left(
\begin{array}{cc}
1 & 1-2 \ve\\
1-2 \ve & 1-3 \ve
\end{array}
\right).
\end{equation*}
If follows from \eqref{ineqPQ} that $1+\ve \leq p/q + \cO(\ve^2) \leq 1+2 \ve$.
From this point, one easily obtains that $q \gtrsim \ve^{-1} \approx \kappa(D)^2$. The radius of the A-NN stencil, at a point $z\in \Omega$, may thus be of the order of $\kappa(\Diff(z))^2$.
The radii of the AD-LBR and \WG~stencils, computed for diffusion tensors of anisotropy $\kappa=10$ and of various orientations, are illustrated on Figure \ref{fig:Radius}.
%$0 < \ve < 1/4$, it is not hard to check that the euclidean radius of the stencil of \WG grows proportionally to $\kappa(D)^2$. See also Figure \ref{fig:Radius}.
\end{remark}

\begin{figure}
\centering
\includegraphics[width=6cm]{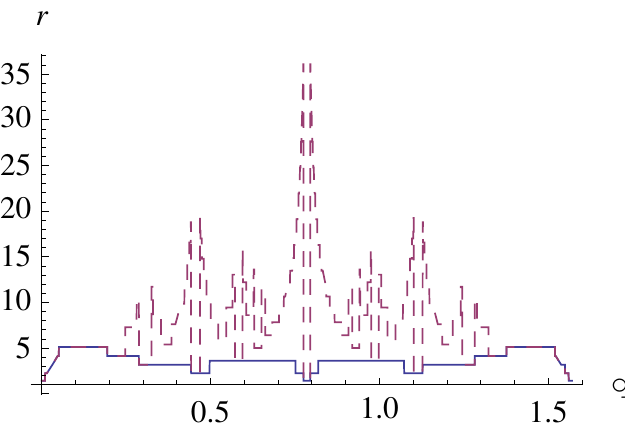} 
\caption{
Radius of the AD-LBR stencil (plain), and of the \WG~stencil (dashed), for a matrix $D_\theta$ of anisotropy ratio $\kappa = 10$ and eigenvector $(\cos\theta,\sin \theta)$. The AD-LBR stencil is here always the smallest, and its radius does not exceed $5.1$, versus $36.1$ for \WG.
}
\label{fig:Radius}
\end{figure}

\begin{figure}
\centering
\begin{tabular}{cc}
\includegraphics[width=6cm]{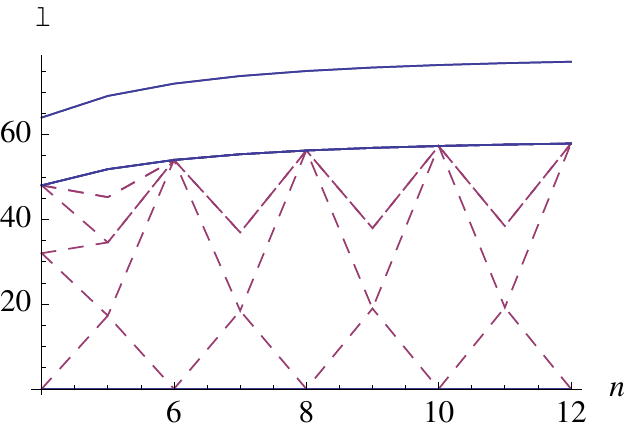} &
\hspace{-0.5cm}{\raise 1.5cm \hbox{
$
D = 
\left(
\begin{array}{cc}
19/2 & 6\\
6 & 4
\end{array}
\right)
$
}}
\end{tabular}
\caption{
The seven smallest eigenvalues of the operator $-\diver(D \nabla )$, on $[0,1]^2$ with periodic boundary conditions, discretized on a $n\times n$ grid with $4 \leq n \leq 12$. Plain: AD-LBR discretization; Dashed: \WG~discretization.
Some eigenvalues have multiplicities, hence less than 14 graphs are visible. 
Eigenvalues of the \WG~discretization here ``oscillate'' with the dimension.
In contrast, thanks to its asymptotic equivalence with a finite element scheme, the smallest eigenvalues of the AD-LBR discretization converge towards those of the continuous partial differential operator. % namely $(0, 59. \cdots, 79. \cdots)$ with multiplicities $(1,4,2)$.
}
\label{fig:Eigen}
\end{figure}

\begin{remark}[Scheme FD]
\label{rem:FD}
The operator $\diver(\Diff \nabla \cdot)$ is discretized using centered finite differences \cite{Mitchell80}. This involves quantities defined at half integer indices, and in particular the diffusion tensor is here given on the offsetted grid $(i+1/2,\, j+1/2)$, $(i,j)\in \Z^2$. For the sake of readability, we thus define $i^+ := i+1/2$ and $i^- := i-1/2$.
%We define the gradient by:
The gradient operator is discretized by:
\begin{equation*}
(\partial_x u)_{i^+,j}=u_{i+1,j}-u_{i,j}, \quad 
(\partial_y u)_{i,j^+}=u_{i,j+1}-u_{i,j}.
\end{equation*}
The divergence is defined as follows:
\begin{align*}
\diver(\DD\nabla u)_{i,j} &= \partial_x(\DD^{11}\partial_x u+\DD^{12}\partial_y u)_{i,j}\\
&+\partial_y(\DD^{21}\partial_x u+\DD^{22}\partial_y u)_{i,j},
\end{align*}
with
%\begin{equation*}
%&(\DD^{11}\partial_xu)_{i+1/2,j}\\
%&=\dfrac{1}{2}\left(\DD^{11}_{i+1/2,j+1/2}+\DD^{11}_{i+1/2,j-1/2}\right)(\partial_x u)_{i+1/2,j},
%\end{equation*}
\begin{align*}
(\DD^{11}\partial_xu)_{i^+,j} &= \frac{1}{2}\left(\DD^{11}_{i^+,j^+}+\DD^{11}_{i^+,j^-}\right)(\partial_x u)_{i^+,j},\\
\partial_x(\DD^{11}\partial_xu)_{i,j} &= (\DD^{11}\partial_xu)_{i^+,j}-(\DD^{11}\partial_xu)_{i^-,j}\\
(\DD^{21}\partial_xu)_{i^+,j^+} &= \dfrac{1}{2}\DD^{21}_{i^+,j^+}\left((\partial_x u)_{i^+,j}+(\partial_x u)_{i^+,j+1}\right),\\
\partial_y(\DD^{21}\partial_xu)_{i,j} &= \dfrac{1}{2}\left((\DD^{21}\partial_xu)_{i^+,j^+}-(\DD^{21}\partial_xu)_{i^+,j^-}\right.\\
&\phantom{=} \left.+(\DD^{21}\partial_xu)_{i^-,j^+}-(\DD^{21}\partial_xu)_{i^-,j^-}\right),
\end{align*}
and similar terms involving $\partial_yu$.
%This amounts to discretize the term $\diver(\DD \nabla u)$ using a stencil of 9 points, where the coefficients depend on the matrix $\DD$.
\end{remark}

\begin{table}[h]
\centering
\small
%\tiny
\caption{
The stencil coefficients for different \emph{constant} diffusion tensors, and the different schemes presented. The value of the anisotropy ratio $\kappa$ is given in the second row, and the orientation of the principal axis is $\theta = \pi/6$, see \eqref{def:DTest}. The bold coefficient indicates the center node. In some examples we present for clarity reasons only half of the stencil (the other half can be deduced by symmetry).
Stencil entries are highlighted when they are positive and off-center - an undesirable property which gives rise to stability issues. For small anisotropies, $\kappa \leq 1+\sqrt 2$, one has AD-LBR = W-NN = \WG.
}
\label{tab:coeff1}
\vspace{0.3cm}
\hspace{-1cm}
\begin{tabular}{|c|c|c|c|}
\hline
$\kappa$ & 

$\kappa=1$ ($\Diff = \Id$)& 

$\kappa=\sqrt{2}$\\

%\\ 
\hline%%%%%%%%%%%%%%%%%%%%%%%%%%%%%%%%%%%%%%%
\begin{minipage}{1.4cm}stencil for\\ AD-LBR\end{minipage} & 
$
%\left|
\begin{array}{ccc}
0 & -1 & 0\\
-1 & {\bf 4} & -1\\
0 & -1 & 0
\end{array}
%\right|
$ 

& 

$%\left|
\begin{array}{ccc}
0 & -0.41 & -0.22\\
-0.66 & {\bf 2.57} & -0.66\\
-0.22 & -0.41 & 0
\end{array}
%\right|
$\\

 \hline%%%%%%%%%%%%%%%%%%%%%%%%%%%%%%%%%%%%%%%%%%%
\begin{minipage}{1.4cm}stencil for\\  FD\end{minipage}  & 
$%\left|
\begin{array}{ccc}
0 & -1 & 0\\
-1 & {\bf 4} & -1\\
0 & -1 & 0
\end{array}
%\right|
$

&

$
%\left|
\begin{array}{ccc}
\alert{0.11} & -0.63 & -0.11\\
-0.88 & {\bf 3} & -0.88\\
-0.11 & -0.63 & \alert{0.11}
\end{array}
%\right|
$\\

 \hline%%%%%%%%%%%%%%%%%%%%%%%%%%%%%%%%%%%%%%%%%%%
 \begin{minipage}{1.4cm}stencil for\\ Q1\end{minipage} & 
 $
 \dfrac{1}{3}
 \left(
 \begin{array}{ccc}
-1 & -1 & -1\\
-1 & {\bf 8} & -1\\
-1 & -1 & -1
\end{array}
\right)
$ 

& 

$%\left|
\begin{array}{ccc}
-0.14 & -0.13 & -0.36\\
-0.38 & {\bf 2} & -0.38\\
-0.36 & -0.13 & -0.14
\end{array}
%\right|
$\\

\hline%%%%%%%%%%%%%%%%%%%%%%%%%%%%%%%%%%%%%%%%%%%
 \begin{minipage}{1.4cm}stencil for\\ WS\end{minipage} & %\tiny
 $
 %\left.
 \begin{array}{ccc}
 -0.1 & -0.06 & -0.02\\
 \alert{0.12} & 0 & -0.06\\
{\bf 0.46} & \alert{0.12} & -0.1\\
 \alert{0.12} & 0 & -0.06\\
 -0.1 & -0.06 & -0.02
\end{array}
%\right|
$ 

& 

$
%\left.
\begin{array}{ccc}
 -0.06 & -0.05 & -0.02\\
 \alert{0.01} & -0.04 & -0.06\\
 {\bf 0.35} & \alert{0.07} & -0.09\\
 \alert{0.01} & \alert{0.04} & -0.04\\
 -0.06 & -0.02 & -0.01
\end{array}
%\right|
$\\

 \hline%%%%%%%%%%%%%%%%%%%%%%%%%%%%%%%%%%%%%
 \begin{minipage}{1.4cm}stencil for\\ W-NN\end{minipage} & 
 $
 %\left|
 \begin{array}{ccc}
0 & -1 & 0\\
-1 & {\bf 4} & -1\\
0 & -1 & 0
\end{array}
%\right|
$ 

& 

$%\left|
\begin{array}{ccc}
0 & -0.41 & -0.22\\
-0.66 & {\bf 2.57} & -0.66\\
-0.22 & -0.41 & 0
\end{array}
%\right|
$\\

\hline%%%%%%%%%%%%%%%%%%%%%%%%%%%%%%%%%%%%%
 \begin{minipage}{1.4cm}stencil for\\ \WG \end{minipage} & 
 $
 %\left|
 \begin{array}{ccc}
0 & -1 & 0\\
-1 & {\bf 4} & -1\\
0 & -1 & 0
\end{array}
%\right|
$ 

& 

$%\left|
\begin{array}{ccc}
0 & -0.41 & -0.22\\
-0.66 & {\bf 2.57} & -0.66\\
-0.22 & -0.41 & 0
\end{array}
%\right|
$\\

\hline
\end{tabular}
\end{table}

\begin{table}[htb]
\centering
\small
%\tiny
\caption{The stencil coefficients for different metrics and the different schemes presented, similarly to Table \ref{tab:coeff1} but with more pronounced anisotropies. For the scheme \WG~some points of the stencil are too far from the center node to be represented here, so we indicate the coordinates of these points and the associated coefficient.}\label{tab:coeff2}
\vspace{0.3cm}
\begin{tabular}{|c|c|c|}
%\\%%%%%%%%%%%%%%%%%%%%%%%%%%%%%%%%%%%%%%%%%%%%%%%%%
\hline
$\kappa$ & 

$\kappa=\sqrt{10}$ %, $\theta= \pi/6$ 

& 

 $\kappa=\sqrt{50}$ %, $\theta= \pi/6$
\\ %%%%%%%%%%%%%%%%%%%%%%%%%%%%%%%%%%%%%%%%%%%%%%%%%
\hline
\begin{minipage}{1.4cm}stencil for\\ AD-LBR\end{minipage} &  

$
\begin{array}{ccc}
 0 & -0.26 & -0.06\\
 {\bf 1.16}  & -0.26 & 0%\\
% 0 & 0 & 0
\end{array}
%\right|
$

&

$
\begin{array}{ccc}
 0 & -0.11 & -0.16\\
 {\bf 0.55} & -0.01 & 0%\\
 %0 & 0 & 0
\end{array}
%\right|
$

\\ \hline%%%%%%%%%%%%%%%%%%%%%%%%%%%%%%%%%%%%%%%%%%%
\begin{minipage}{1.4cm}stencil for\\  FD\end{minipage}  &

$
\begin{array}{ccc}
\alert{0.19} & -0.32 & -0.19\\
-0.77 & {\bf 2.2} & -0.77\\
-0.19 & -0.32 & \alert{0.19}
\end{array}$

&

$\begin{array}{ccc}
\alert{0.21} & -0.27 & -0.21\\
-0.76 & {\bf 2.04} & -0.76\\
-0.21 & -0.27 & \alert{0.21}
\end{array}$

 \\ \hline%%%%%%%%%%%%%%%%%%%%%%%%%%%%%%%%%%%%%%%%%%%
 \begin{minipage}{1.4cm}stencil for\\ Q1\end{minipage} &

$\begin{array}{ccc}
\alert{0.01} & \alert{0.04} & -0.38\\
-0.41 & {\bf 1.47} & -0.41\\
-0.38 & \alert{0.04} & \alert{0.01}
\end{array}$

&

$
%\left|
\begin{array}{ccc}
\alert{0.04} & \alert{0.08} & -0.38\\
-0.42 & {\bf 1.36} & -0.42\\
-0.38 & \alert{0.08} & \alert{0.04}
\end{array}
%\right|
$

 \\ \hline%%%%%%%%%%%%%%%%%%%%%%%%%%%%%%%%%%%%%%%%%%%
 \begin{minipage}{1.4cm}stencil for\\ WS\end{minipage} &

$
%\left.
\begin{array}{ccc}
 -0.02 & -0.04 & -0.02\\
 \alert{0.09} & -0.08 & -0.07\\
 {\bf 0.25} & \alert{0.04} & -0.08\\
\alert{0.09} & \alert{0.08} & -0.02\\
 -0.02 & \alert{0.004} & -0.003 
\end{array}
%\right|
$

&

$
%\left.
\begin{array}{ccc}
 -0.02 & -0.04 & -0.02\\
 \alert{0.09} & -0.08 & -0.07\\
 {\bf 0.24} & \alert{0.03} & -0.08\\
\alert{0.09} & \alert{0.08} & -0.02\\
 -0.02 & \alert{0.01} & -0.002
\end{array}
%\right|
$

 \\ \hline%%%%%%%%%%%%%%%%%%%%%%%%%%%%%%%%%%%%%
 \begin{minipage}{1.4cm}stencil for\\ W-NN\end{minipage} &

$
%\left|
\begin{array}{ccc}
0 & \alert{0.06} & -0.39\\
-0.39 & {\bf 1.42} & -0.39\\
-0.39 & \alert{0.06} & 0
\end{array}
%\right|
$

&

$
%\left|
\begin{array}{ccc}
0 & \alert{0.16} & -0.42\\
-0.33 & {\bf 1.19} & -0.33\\
-0.42 & \alert{0.16} & 0
\end{array}
%\right|
$
\\
 \hline%%%%%%%%%%%%%%%%%%%%%%%%%%%%%%%%%%%%%
 \begin{minipage}{1.4cm}stencil for\\ \WG \end{minipage} &

$
\begin{array}{ccc}
 -0.07 & 0\\
{\bf 0.64} & -0.19
\end{array}%{\rm and} \gamma=-0.06{\rm for}(p,q)=(3,2)
$

&

$
\begin{array}{ccc}
 -0.01 & 0\\
{\bf 0.17} & -0.05
\end{array}%{\rm and} \gamma=-0.06{\rm for}(p,q)=(3,2)
$

\\
&$\gamma(3,2)=-0.06$ & $\gamma(5,3)=-0.03$  

\\
\hline
\end{tabular}
\end{table}

\begin{table}[h]
\centering
\small
%\tiny
\caption{
Largest eigenvalue of the discretized operator $-\diver(\DD\nabla)$, for the constant metric $\Diff=D$, where the matrix $D$ is given on Tables \ref{tab:coeff1} and \ref{tab:coeff2}.
The time step, in the explicit discretization of \eqref{eq:parabolique}, should not exceed the inverse of this value. %, see Remark \ref{rem:splitting}.
}\label{tab:ev1}
\vspace{0.3cm}
\begin{tabular}{|c|c|c|c|c|}
\hline

$\kappa$ & $\kappa=1$ & $\kappa=\sqrt{2}$ & $\kappa = \sqrt {10}$ & $\kappa = \sqrt{50}$ \\%& $\kappa^2=2, \theta=\dfrac{\pi}{4}$  
%\\ 
\hline%%%%%%%%%%%%%%%%%%%%%%%%%%%%%%%%%%%%%%%
\begin{minipage}{1.4cm}eigenvalue\\ AD-LBR\end{minipage} & 8 & 4.27 &2.06 & 1.06\\%& 16
%\\ 
\hline%%%%%%%%%%%%%%%%%%%%%%%%%%%%%%%%%%%%%%%%%%%
\begin{minipage}{1.4cm}eigenvalue\\  FD\end{minipage} & 8 &6.22 & 5.06 & 4.85\\%& 24
% \\ 
 \hline%%%%%%%%%%%%%%%%%%%%%%%%%%%%%%%%%%%%%%%%%%%
\begin{minipage}{1.4cm}eigenvalue\\ Q1\end{minipage} & 5.7 &  4.94 & 4.32 & 4.20\\%& 17.1
%\\ 
\hline%%%%%%%%%%%%%%%%%%%%%%%%%%%%%%%%%%%%%%%%%%%
\begin{minipage}{1.4cm}eigenvalue\\ WS\end{minipage} & 1 &  1 & 1 & 1%& 4.03
 \\ \hline%%%%%%%%%%%%%%%%%%%%%%%%%%%%%%%%%%%%%
\begin{minipage}{1.4cm}eigenvalue\\ W-NN\end{minipage} & 8 &  4.27 & 3.1 & 3.02\\ \hline%& 16
%\\
\begin{minipage}{1.4cm}eigenvalue\\ \WG\end{minipage} & 8 &  4.27 & 1.04 & 0.3\\
\hline
\end{tabular}
\end{table}

%
%
%\begin{table}[htb]
%\centering
%\small
%%\tiny
%\caption{Largest eigenvalue of the discretized operator $-\diver(\DD\nabla)$ for different metrics and the different schemes.}\label{tab:ev2}
%\begin{tabular}{|c|c|c|}
%\hline
%matrix $\DD$ 
%&
%$\dfrac{1}{40}\left(
%\begin{array}{cc}
%13 & -9\sqrt{3} \\
%-9\sqrt{3} & 31
%\end{array}
%\right)$& 
%$\left(
%\begin{array}{cc}
%0.755 & 0.424 \\
%0.424 & 0.265
%\end{array}
%\right)$
%\\%%%%%%%%%%%%%%%%%%%%%%%%%%%%%%%%%%%%%%%%%%%%%%%%%
%\hline
%$\kappa$ and $\theta$ & $\kappa=\sqrt{10}$, $\theta=\pi/6$ & $\kappa=\sqrt{50}$, $\theta=\pi/6$ %\dfrac{\pi}{6}
%\\ \hline%%%%%%%%%%%%%%%%%%%%%%%%%%%%%%%%%%%%%%%%%%%
%\begin{minipage}{1.6cm}eigenvalue\\ AD-LBR\end{minipage}  &2.06 & 1.06
%\\ \hline%%%%%%%%%%%%%%%%%%%%%%%%%%%%%%%%%%%%%%%%%%%
%\begin{minipage}{1.6cm}eigenvalue\\ scheme 1\end{minipage}   & 5.06 & 4.85
% \\ \hline%%%%%%%%%%%%%%%%%%%%%%%%%%%%%%%%%%%%%%%%%%%
%\begin{minipage}{1.6cm}eigenvalue\\ scheme 4\end{minipage}  & 4.32 & 4.20
% \\ \hline%%%%%%%%%%%%%%%%%%%%%%%%%%%%%%%%%%%%%%%%%%%
%\begin{minipage}{1.6cm}eigenvalue\\ scheme 5\end{minipage}  & 1 & 1
% \\ \hline%%%%%%%%%%%%%%%%%%%%%%%%%%%%%%%%%%%%%
%\begin{minipage}{1.6cm}eigenvalue\\ scheme 6\end{minipage}  & 3.1 & 3.02
%\\
%\hline
%\end{tabular}
%\end{table}
%

%%%%%%%%%%%%%%%%%%%%%%%%%%%%%%%%%%%%%%%%%%%%%%%%%%%%%%%%%%%%%%%%%%%
\subsection{A test case with an explicit solution} %Analytical configuration
\label{ssec:explicit}

Consider an image $v\in L^2(\Omega)$, defined on a domain $\Omega$, and a diffusion tensor field $\DD : \Omega \to S_2^+$. 
A classical approach to restore the image $v$, if it has been corrupted by additive noise, is to find $u \in H^1(\Omega)$ which minimizes:
%The (anisotropic) restoration equation applied to the image $v$ is the problem of finding $u\in  H^1(\Omega)$ that minimizes:
\begin{equation}
\label{eq:restoration}
j(u)=\int_\Omega |u-v|^2 + \lambda \int_\Omega ||\nabla u||^2_{\DD}.
\end{equation}
In other words, $u$ is a penalized least squares approximation of $v$. 
%The penalization functional is a variant of the $H^1$ semi-norm,
%involving a potentially anisotropic diffusion tensor $\DD$. 
The parameter $\lambda>0$ should be adjusted so as to avoid excessive smoothing (for large $\lambda$), or insufficient denoising (for small $\lambda$).
%The parameter $\lambda>0$ is used to find the best compromise between excessive smoothing une this penalization, hence the resulting smoothing.
%which should be care
The solution $u$ can be characterized as the solution to the static elliptic PDE:
%This amounts to find $u\in H^1(\Omega)$ that solves:
\begin{equation}
\label{elliptic}
\left\{
\begin{array}{ll}
-\lambda\diver(\DD\nabla u)+u = v, & \text{ on } \Omega.\\
\<\nabla u, n\>=0, & \text{ on } \partial\Omega.
\end{array}
\right.
\end{equation}
In applications \cite{PM90,W96} the diffusion tensor $\Diff$ is usually adapted to the local image structure, in order to avoid smoothing the edges of $v$.
We construct below a test case (image $v$ and tensor field $\DD$), for which the solution $u$ is known analytically. %where the analytical solution is known. 

In order to obtain an analytic solution, we first consider a separable problem where the image is invariant by translation along the horizontal axis, and the metric is constant with axes parallel to the coordinate axes. This first problem is invariant under translations along the $x$-axis, and therefore boils down to a 1-dimensional problem. This separable problem is then transported by a diffeomorphism in order to obtain a new problem where the axes of the metric are no more parallel to the coordinate axes. 

The analytical image is composed of a black and a white stripe: $v_0(x,y)={\bf 1}_{y<0.5}$, see Figure \ref{fig:stripe}. Given $\kappa\geq 1$, we consider the constant diffusion tensor 
\begin{equation*}
%\DD(x)=
\DD_0=\left(
\begin{array}{cc}
1 & 0\\
0 & \kappa^{-2} 
\end{array}
\right).
\end{equation*}
%for a given $\kappa\ge1$.%, typiquement $\kappa=10$ sur les contours. Le coefficient $c$ mesure la pondération entre les 2 termes dans la fonction $j$, et le coefficient $\kappa$ mesure l'anisotropie du tenseur de conductivité (ou de la métrique $\cal M$). On diffuse parallèlement au contour avec un coefficient $c$ et perpendiculairement au contour avec un coefficient $c/\kappa^2$.

The analytical solution $u_0$ of \eqref{elliptic}, applied to $\Diff_0$ and $v_0$, is known in the case of the infinite domain $\Omega = \R^2$.
% since the Neumann boundary conditions lead to an equivalent problem in $\R^2$ using reflections with respect to the boundaries, and periodization. 
In Fourier domain all the coefficients are real and:
\begin{equation*}
\widehat{u_0}(\xi)=\widehat{v_0}(\xi)/(1+\<\xi,\DD_0 \xi\>).
\end{equation*}
This separable problem is transformed using the following diffeomorphism: for $(x,y) \in \Omega$
\begin{equation*}
f(x,y)=(x,y+\alpha\cos(2\pi x)).
\end{equation*}
The Jacobian of $f$ is
\begin{equation*}
J(x,y)=\left(
\begin{array}{cc}
1 & 0\\
-2\pi \alpha\sin(2\pi x) & 1
\end{array}
\right)
\end{equation*}
We apply the different restoration schemes to the image  $v=v_0\circ f$, and the following diffusion tensor: %, or conductivity matrix field:
\begin{align*}
\DD(z)&=|\det J(z)| \ J(z)^{-1}\,\DD_0\, (J(z)^{-1})^\trans\\
&=J(z)^{-1}\, \DD_0 \, (J(z)^{-1})^\trans=\left(
\begin{array}{cc}
1 & s\\
s & s^2+\kappa^{-2}
\end{array}
\right),
\end{align*}
where we denoted $z=(x,y) \in \Omega$ and $s=2\pi \alpha \sin(2\pi x)$. The numerical solution is compared to the analytical function $u = u_0 \circ f$, which is the exact solution in the case of the infinite domain $\Omega = \R^2$.
This numerical solution was obtained on the bounded domain $\Omega = [0,1[^2$, equipped with reflecting boundary conditions. 
Numerical evidence suggests that this change of domain and of boundary conditions has only an anecdotic impact on the solution of \eqref{elliptic}, with the parameters chosen in this test case.

%, with the chosen parameters, the difference between the numerical and analytical solutions mainly dependent on the quality of the PDE discretization close to the discontinuity of $v$, and that the differenis far more dependent on 
%For the implementation, discretization we used the bounded domain 
We used $\alpha := 1/3$ in the numerical experiments. The maximum value of $\kappa(\Diff(x))$, among all $x \in \Omega$, is equivalent to $\kappa_{\max} := \kappa \sqrt{1+(2 \pi \alpha)^2}  \simeq 2.3 \kappa$.
%In our numerical simulations we use $\alpha=1/3$. 

\begin{figure}[htb]
\begin{centering}
\includegraphics[width=41mm]{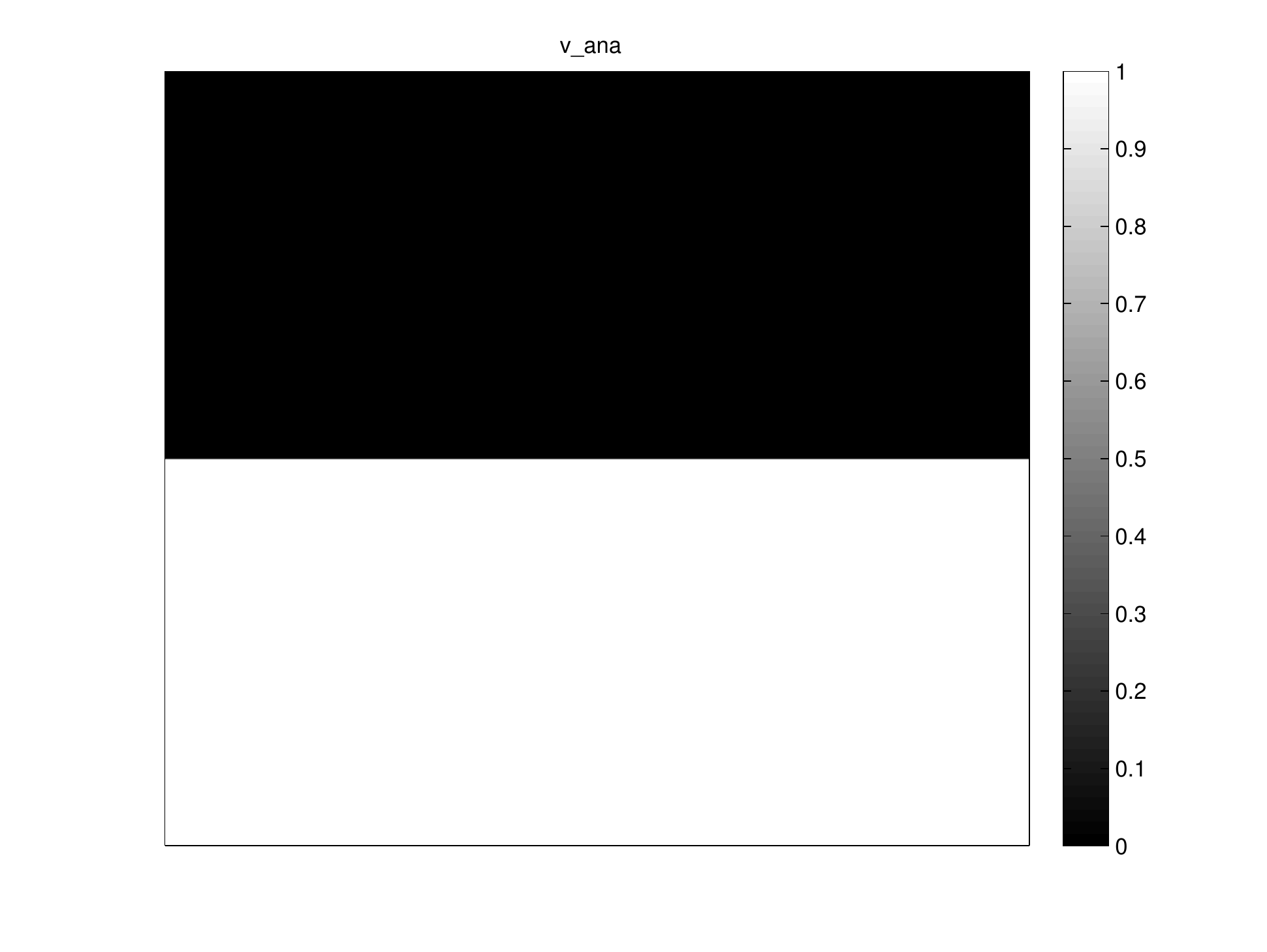}
\includegraphics[width=41mm]{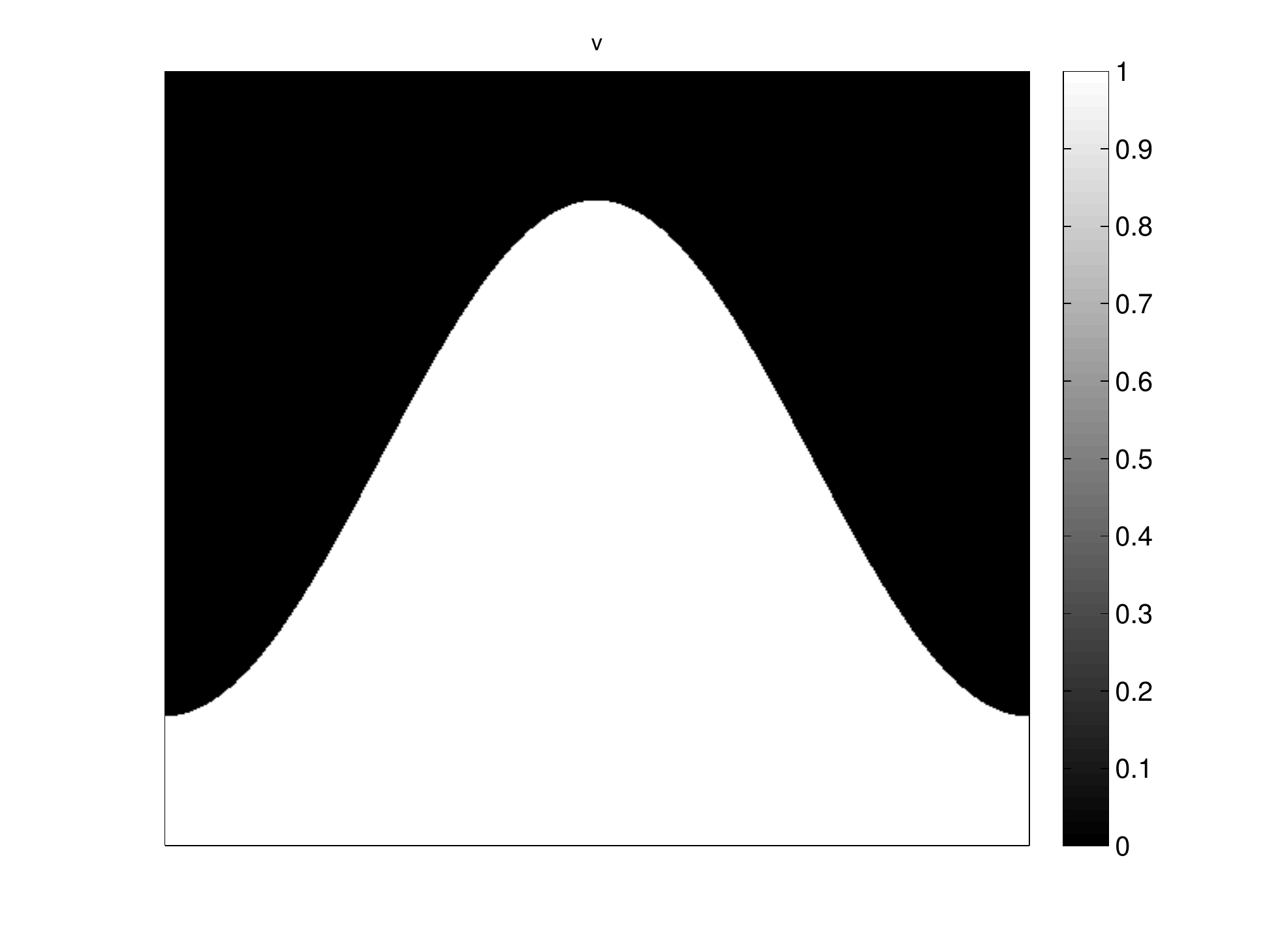}
\caption{Left: image $v_{\rm ana}$. Right: image $v=v_{\rm ana}\circ f$ transformed by the diffeomorphism $f$.}\label{fig:stripe}
\end{centering}
\end{figure}

%%%%%%%%%%%%%%%%%%%%
\subsection{Results for the synthetic test case}

We present in Figure \ref{fig:comparaisonk1} the performance results of the different schemes, for different values of the anisotropy $\kappa$, obtained on a series of grids of size ranging from $100\times 100$ to $1200\times 1200$. The anisotropy varies from $\kappa=2$ to $\kappa=10$, which are relevant values for imaging applications, see the numerical experiments in \S \ref{sec:co-en-dif}. The quality of a scheme is measured by the $L^2$ difference and the $H^1$ semi-norm difference between the numerical solution and the analytical solution. Note that the error is concentrated close to the discontinuity, since the solution tends rapidly to a constant (0 or 1) far from the discontinuity. 
We chose the smoothing parameter $\lambda=10^{-3}$ in \eqref{eq:restoration}.
%We use the value $\lambda=10^{-3}$.
The linear equation obtained by the discretization of \eqref{elliptic} is solved using Conjugate Gradient. 
%The computation times are presented in Table \ref{tab:temps}.

We also tested extreme anisotropies, $\kappa \geq 100$ (thus $\kappa_{\max} \geq 230$), which can be relevant in physics related applications. None of the tested schemes showed convincing results: methods based on fixed stencils fail because the discrete operator looses positivity, while the AD-LBR (and \WG~even more) suffers from under-sampling due to the large radius of its stencils.
We thus refer to \cite{degond2012} for a radically different approach tailored for this setting. 
%It requires to integrate the anisotropy direction field in a preprocessing step, and to 
This method introduces an auxiliary one-dimensional unknown, which is constant on the field lines (obtained in a preprocessing step) of the anisotropy direction field, and varies orthogonally to them.

%The reader might be puzzled by the performance of scheme WS on Figure \ref{fig:comparaisonk1}, which seems to improve as the anisotropy $\kappa$ increases, at least when the error is measured in the $L^2$ norm; visual inspection of this scheme for large $\kappa$ shows strong oscillations in a very thin band close to the discontinuity of $v$.

The performance advantage of the AD-LBR is particularly clear when the error is measured in the $H^1$ semi-norm: for the anisotropy $\kappa = 10$ and the resolution $500\times 500$, which are relevant values in image processing, AD-LBR outperforms its alternatives by a factor ranging from 3 to 5.
%The error compared with the analytical solution is computed. 
%We can observe that the scheme proposed here is more efficient, particularly for large anisotropy ($\kappa\ge 5$). %However for very large anisotropy ($\kappa=100$) and coarse image definition, the proposed schm

\begin{figure}[htb]
\begin{centering}
\includegraphics[width=4.1cm]{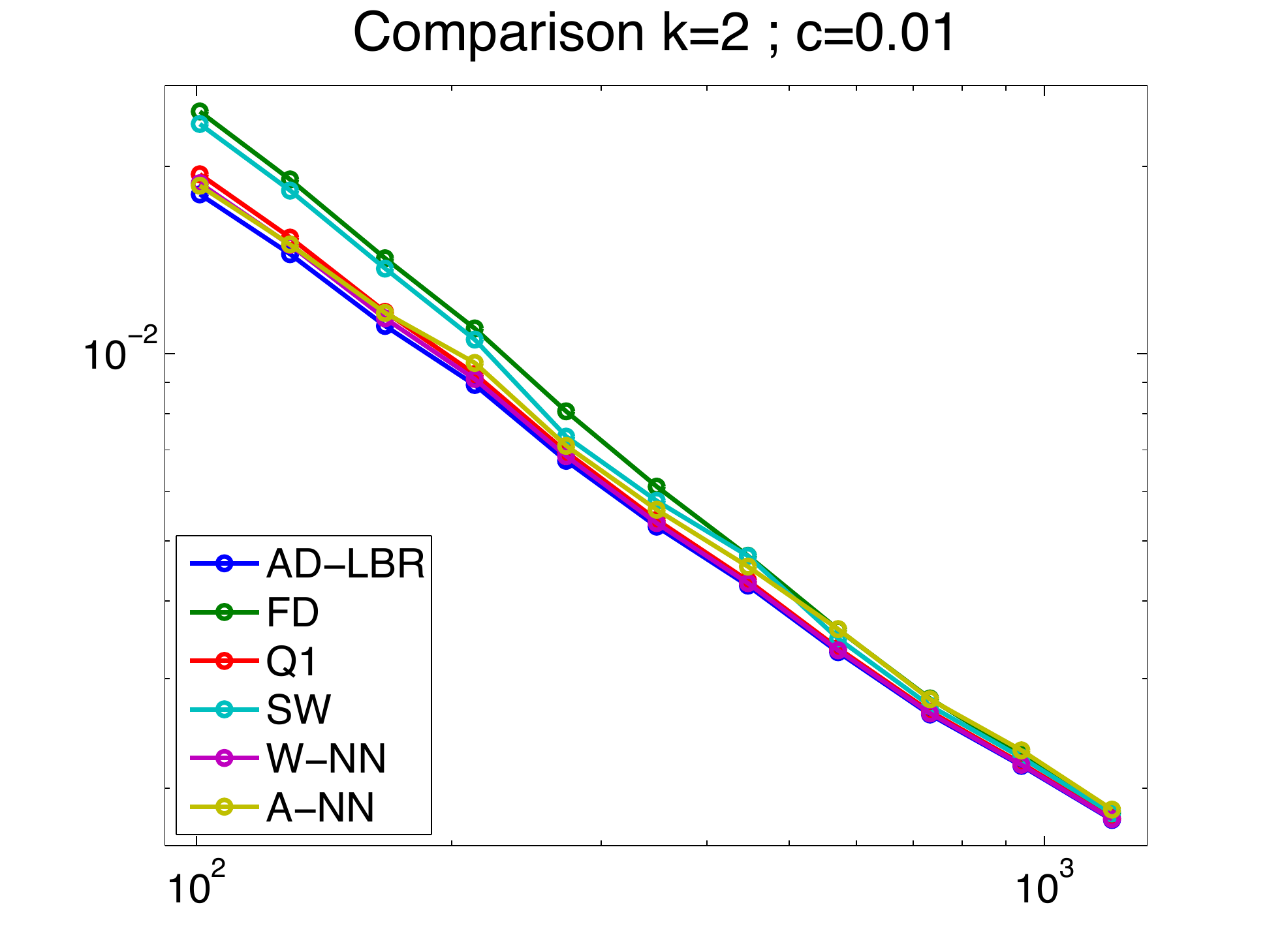}
\includegraphics[width=4.1cm]{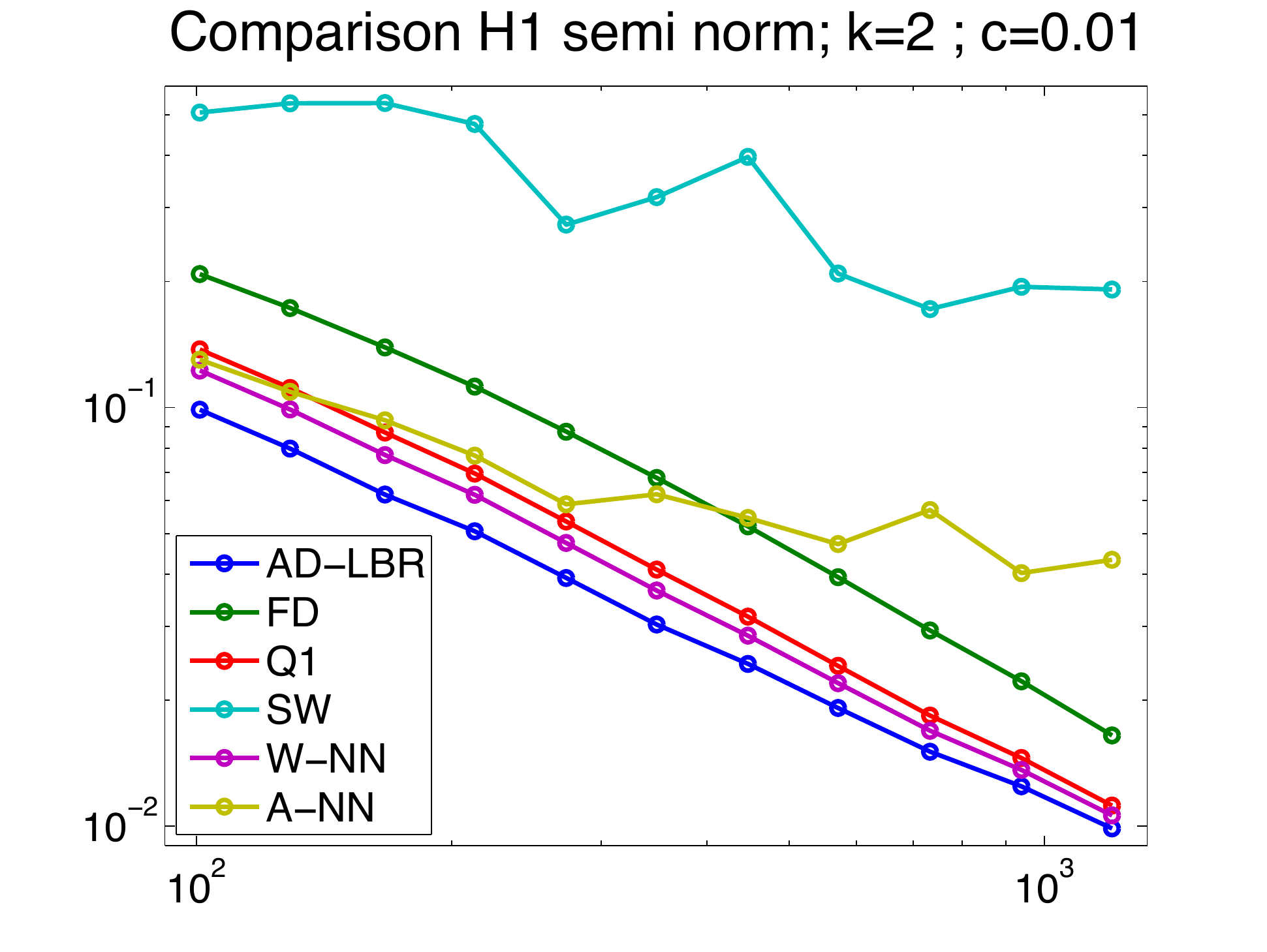}
\includegraphics[width=4.1cm]{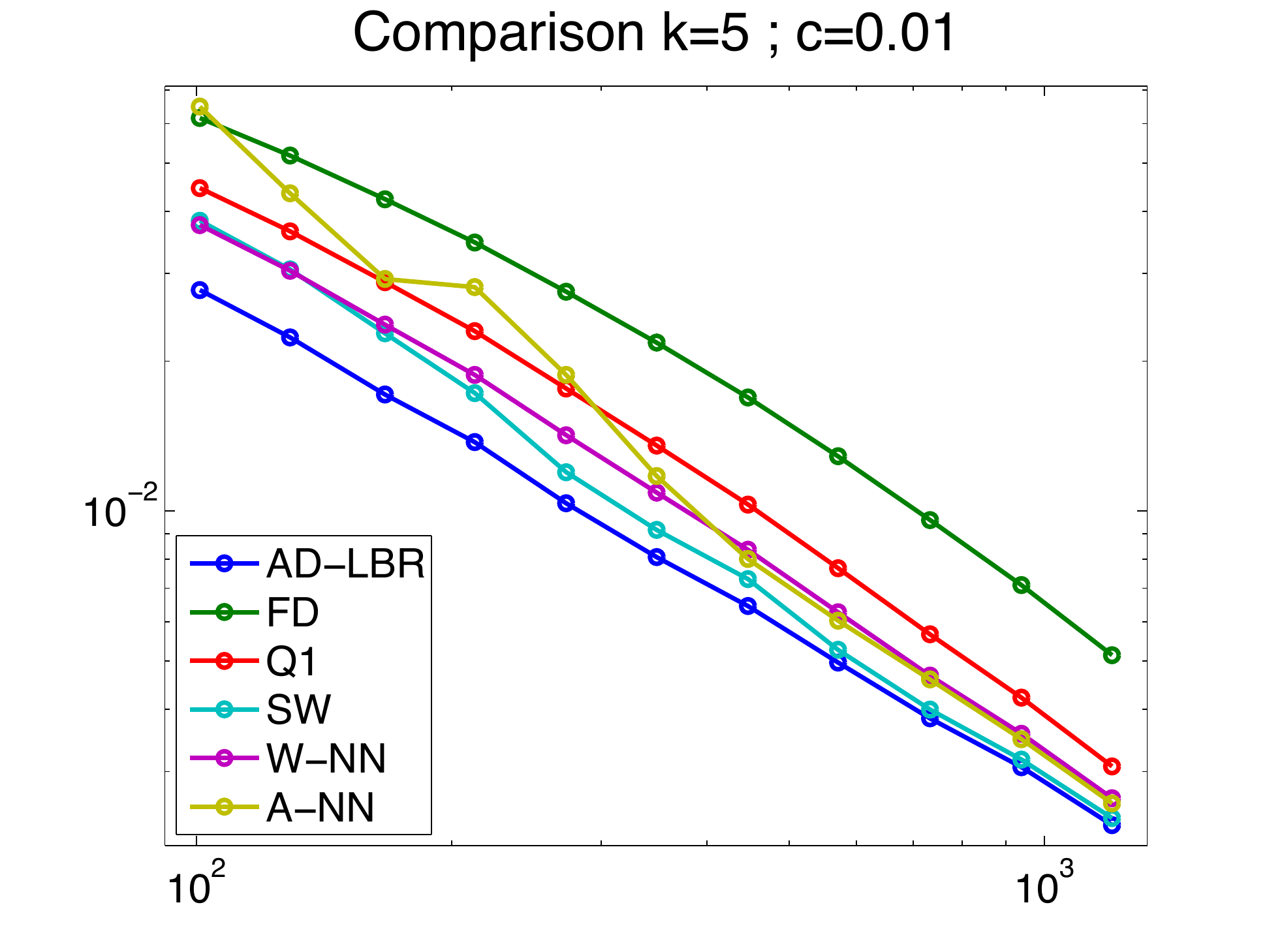}
\includegraphics[width=4.1cm]{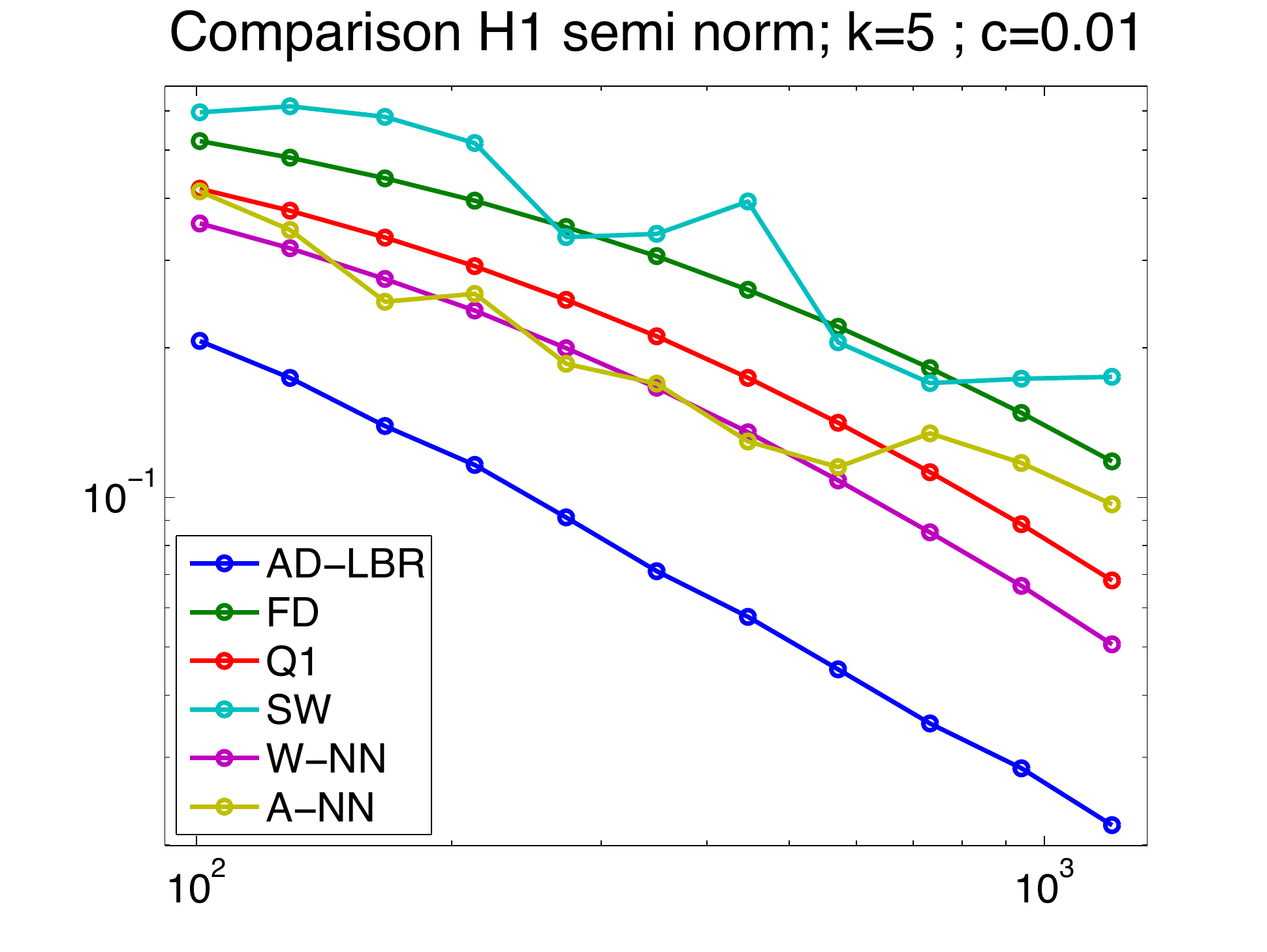}
\includegraphics[width=4.1cm]{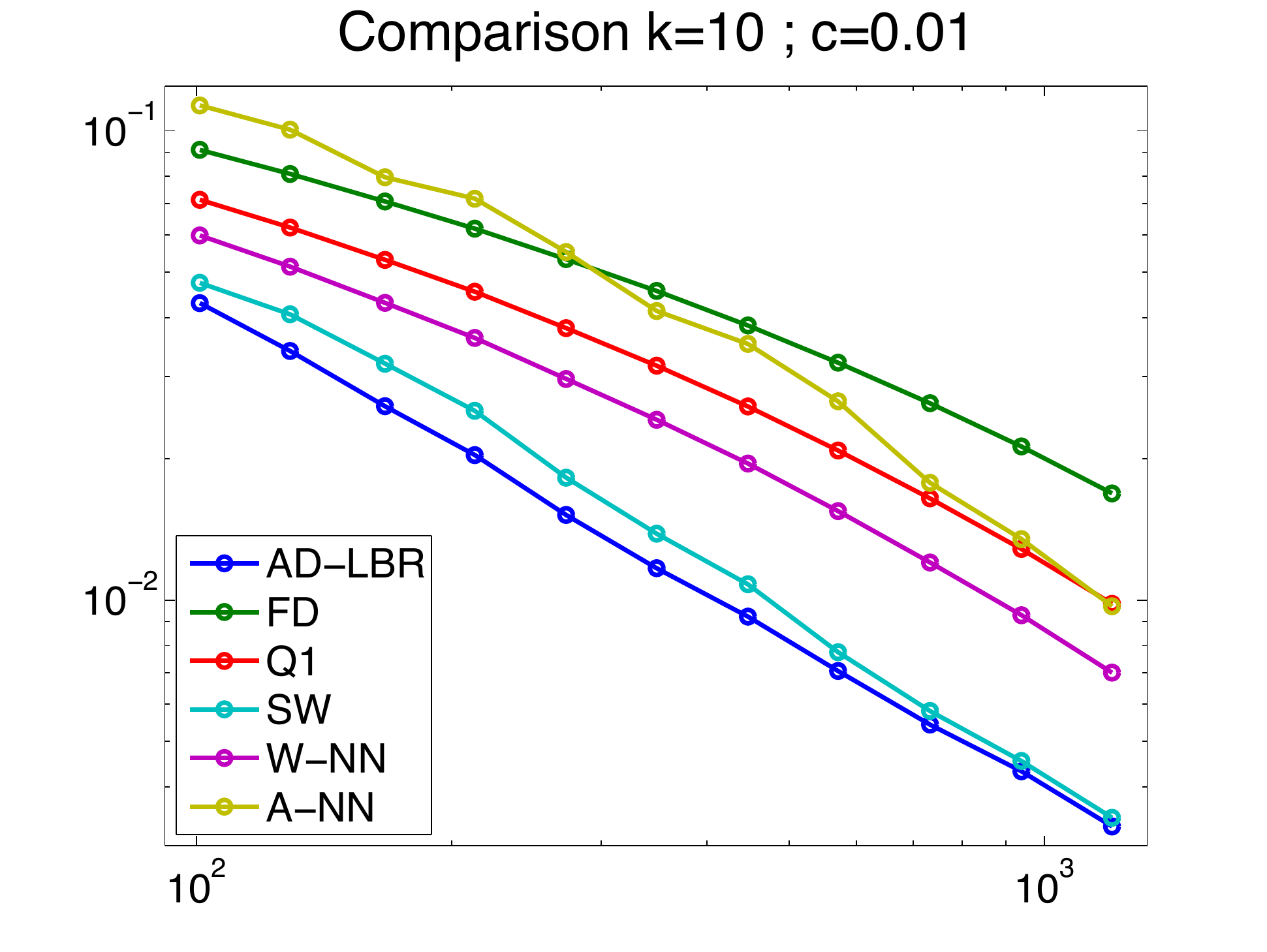}
\includegraphics[width=4.1cm]{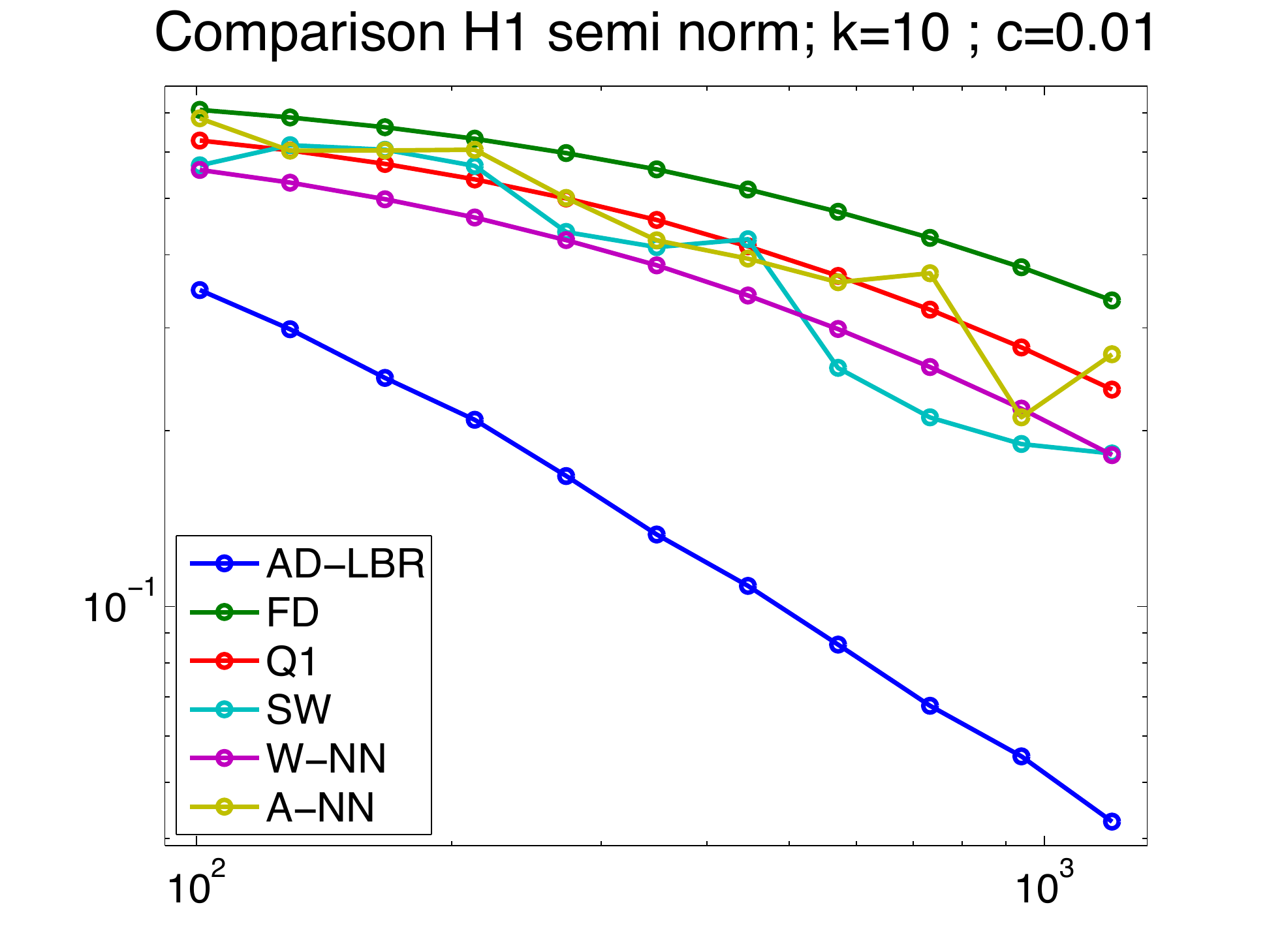}
\caption{
Numerical results for the synthetic test case, with different values of the anisotropy factor: $\kappa=2$ (top row), $\kappa=5$ (middle row), $\kappa = 10$ (bottom row).
Vertical axis: relative error in $L^2$ norm (left column), or $H^1$ semi-norm (right column), for the six schemes tested. Horizontal axis: integer $N$, where the image resolution is $N\times N$. Since the tested schemes are first order, numerical error is expected to be proportional to $N^{-1}$. Log-log scale. 
%Left column: relative error in $L^2$ norm, as a function of image size (log-log scale) for different values of the anisotropy factor: $\kappa=2,5,10$; right column: relative error in $H^1$ norm (log-log scale) for the same test cases. 
%(*on peut enlever le cas 20 par exemple pour gagner de la place*).
}
\label{fig:comparaisonk1}
\end{centering}
\end{figure}

%\begin{table}[htb]
%\centering
%\caption{Computation times for the different schemes for the restoration of a 500$\times$500 image and $\kappa=5$. The implementation is in Matlab using Mexfiles, except for the assembly of AD-LBR (which consists in computing the Reduced Basis and the weights) and for W-NN where a Matlab code is used. The iteration count depends on the discretization, we indicate the total time and the time per iteration.}\label{tab:temps}
%\begin{tabular}{|c|c|c|c|c|c|}
%\hline
%Scheme & CPU time &\begin{minipage}{1.6cm}\center number of\\ iterations\end{minipage} &\begin{minipage}{1.6cm}\center time per\\ iteration\end{minipage}\\
%\hline
%AD-LBR & \begin{minipage}{2.2cm}assembly (not\\optimized): 59s\\iterations: 3.8s\end{minipage}&723 &  5.3ms \\
%\hline
% FD &7.9s & 1735 & 4.6ms\\
%\hline
%Q1 & 37.5s &1562 &24ms\\
%\hline
% WS &  17.6s &847 &20.8 ms\\
% \hline
% W-NN&  \begin{minipage}{2.2cm}\center239.8s\\ (not optimized)\end{minipage} & 1562&154ms \\
%\hline
%\end{tabular}
%\end{table}

\subsection{Coherence-enhancing diffusion}
\label{sec:co-en-dif}

%(*ajouter \WG~*)
In order to document the interest of our discretization, we implement Coherence-Enhancing Diffusion \cite{W98} using the different numerical schemes at our disposal. %we also propose different implementations of a coherence-enhancing diffusion process \cite{W98}. 
The following parabolic equation is considered:
\begin{equation}\label{eq:parabolique}
\partial_t u=\diver(\DD(J_\rho(\nabla u_\sigma))\nabla u).
\end{equation}
This equation is non-linear since the diffusion tensor depends on the solution $u$. This tensor also depends on four user defined parameters $\sigma, \rho, C \in \R_+$, $\alpha \in ]0,1[$. %, in the following manner \cite{W98}. 
Let $K_\sigma$ (resp. $K_\rho$), be the Gaussian kernel of variance $\sigma$ (resp. $\rho$). Define the convolution $u_\sigma := K_\sigma \star u$, and the structure tensor $J_\rho := K_\rho \star(\nabla u_\sigma \nabla u_\sigma^T)$.
The diffusion tensor $\DD(J_\rho)$ possesses the same eigenvectors $(v_1,v_2)$ as $J_\rho$, and if the eigenvalues of $J_\rho$ are $\mu_1\ge \mu_2$ then the eigenvalues of $\DD(J_\rho)$ are 
\begin{align*}
\lambda_1 & :=\alpha\\
\lambda_2 & :=\alpha+(1-\alpha)\exp\left(\dfrac{-C}{(\mu_1-\mu_2)^2}\right).
\end{align*}
%where $\alpha\in(0,1)$ is a small parameter.
This ensures that one smoothes preferably along the coherence direction $v_2$, with a diffusivity that increases with respect to the coherence $(\mu_1-\mu_2)^2$. When the time parameter $t$ becomes large, the image tends to a constant image, therefore it is necessary to stop the process at some finite time $T$. The ratio of the eigenvalues is bounded by $\lambda_2/\lambda_1\le 1/\alpha$, hence $\kappa\le 1/\sqrt{\alpha}$.

We used an explicit time discretization for \eqref{eq:parabolique}, with time step $\Delta t$. The image $u^{n+1}$ at time  $(n+1)\Delta t$ is defined by the explicit equation:
\begin{equation*}
\dfrac{u^{n+1}-u^n}{\Delta t}=\diver(\DD(J_\rho(\nabla u_\sigma^n))\nabla u^n).
\end{equation*}

The parameters  used in our simulation were: $\sigma=0.5$, $\rho=4$, $C=10^{-5}$, $\alpha=10^{-2}$ and $\Delta t=0.02$. This gives a maximum anisotropy of $\kappa=10$. The algorithm was applied to a fingerprint image. The results obtained for $T=10$ are shown in Figures \ref{fig:fingerprint} and \ref{fig:fingerprintzoom}, and they document the ability of our scheme to close interrupted lines more efficiently than the other schemes. 
The largest eigenvalue of the discrete operator $-\diver(\DD \nabla)$ at $t=0$ is given in Table \ref{tab:evced} for the different schemes. As was already noticed in the constant metric case, it turns out that AD-LBR has the smallest eigenvalues among all schemes, except for scheme WS. This property allows (although this was not done in our numerical experiments) to use larger time steps for AD-LBR than for the other schemes.

Note also that ridges are clearer, and valleys are darker, using AD-LBR than with the other schemes.
(Gray-scale range is the same for all images, see also Figure \ref{fig:coupes}).
This reflects the fact that AD-LBR avoids, better than the other schemes, smoothing transversally to the orientation encoded in the continuous anisotropic PDE \eqref{eq:parabolique}.
%The range of the gray-scale is the same for all images, and one can also observe that our scheme preserves the value of the lines more efficiently. 
%This is due to the fact that the stencil that is used in our scheme is  adapted at every pixel to the orientation and anisotropy of the diffusion tensor.

\begin{remark}[Computation time]
Numerical solvers of the parabolic PDE \eqref{eq:parabolique} combine three main components: (i) Constructing the diffusion tensor. (ii) Assembling the discretization stencils and the operator sparse matrix. (iii) Performing an explicit time step. Components (i) and (ii) are executed exactly the same number of times, while step (iii) is generally more frequent: in order to save CPU time, one typically does not update the diffusion operator at each time step.
We produced a C++ implementation of AD-LBR, within the Insight Toolkit open source library. %\footnote{Numerical results for this section were obtained using another implementation, in Matlab}. 
Although our code is neither parallel nor aggressively optimized, we believe that comparing the CPU times for steps (i), (ii) and (iii) is informative, and allows to estimate the additional cost of AD-LBR which is essentially contained in step (ii).

For our 2D Coherence-Enhancing Diffusion (CED) experiment, on the $512\times 512$ fingerprint image, (i) takes 0.21s, (ii) 0.027s, (iii) 0.005s.  For our 3D CED Experiment, on $100\times 100\times 100$ synthetic data, (i) takes 1.35s, (ii) 0.51s, (iii) 0.035s. In both cases, the AD-LBR specific step (ii) is dominated by the construction of the diffusion tensor (i). Step (ii) may also be dominated by the mere cost (iii) of iterations, provided the operator is updated less than once every 6 explicit steps in 2D (14 in 3D). To our eyes, the limited additional cost (ii) of AD-LBR is acceptable in view of the strong theoretical guarantees, and qualitative improvements, brought by this scheme.

%Solving the parabolic PDE \eqref{eq:parabolique}, is done by iterating the three following steps. (i) Constructing the diffusion tensor. (ii) Assembling the stencils. (iii) Performing some user defined number $k\geq 1$ of explicit time steps (in practice one often chooses $k>1$ to alleviate the cost of step (i)). 
%The main difference between AD-LBR and the other numerical schemes lies in step (ii), and it is thus 

%For the two dimensional 
\end{remark}

\begin{table}
\caption{Largest eigenvalue of the discretized operator $-\diver(\DD\nabla)$, where $\DD=\DD(J_\rho(\nabla u_\sigma))$ at $t=0$.
}\label{tab:evced}
\hspace{-1cm}
\begin{tabular}{|c|c|c|c|c|c|c|}
\hline
scheme & AD-LBR & FD & Q1 & WS & W-NN & \WG\\
\hline
eigenvalue & 3.75 & 5.67 & 5.09 & 0.96 & 3.83 & 6.23\\
\hline
\end{tabular}
\end{table}

\begin{figure}[htb]
\begin{centering}
\subfigure[][Original image]{\includegraphics[width=4.1cm]{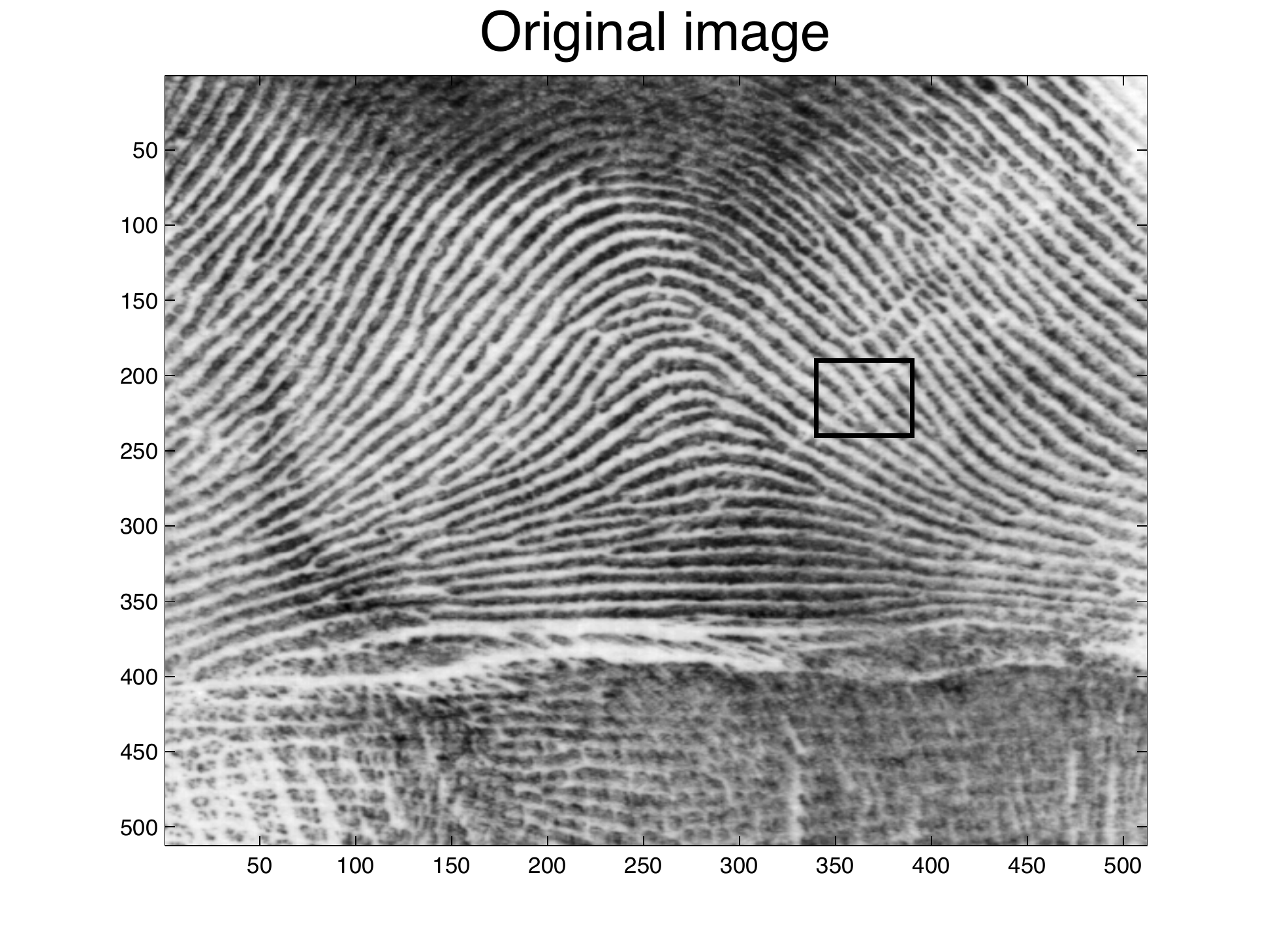}}
\subfigure[][AD-LBR]{\includegraphics[width=4.1cm]{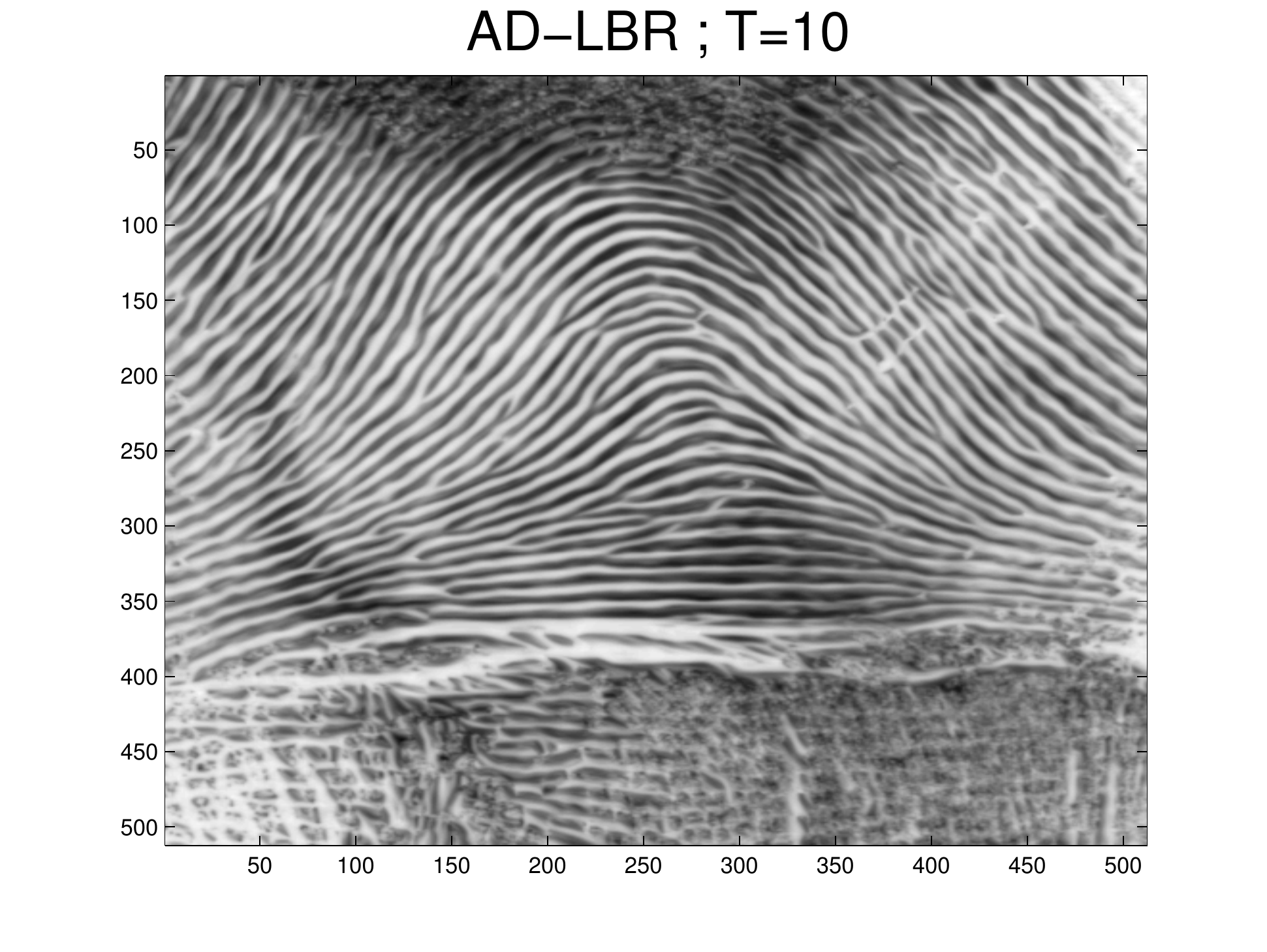}}
\subfigure[][FD]{\includegraphics[width=4.1cm]{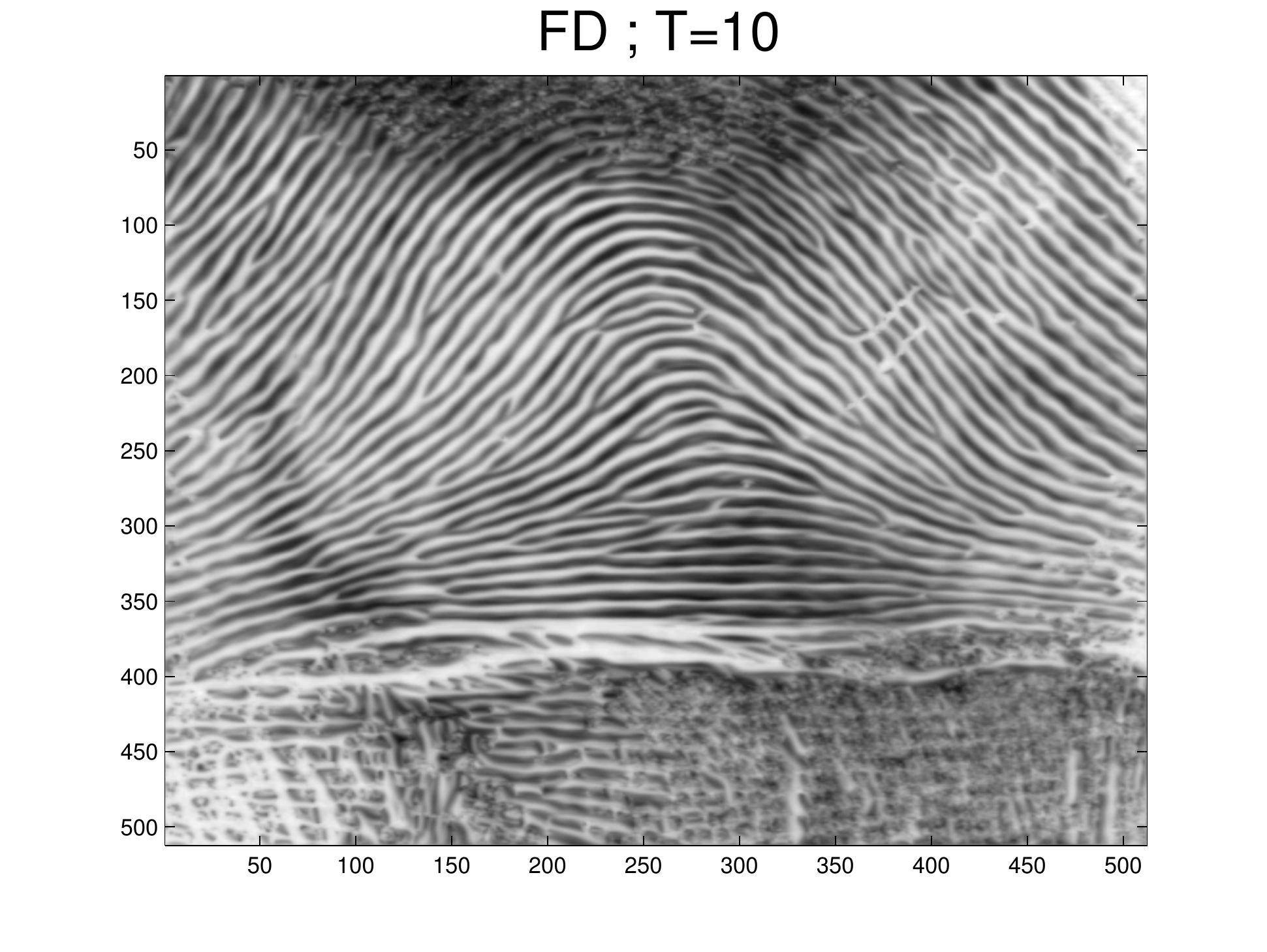}}
\subfigure[][Q1]{\includegraphics[width=4.1cm]{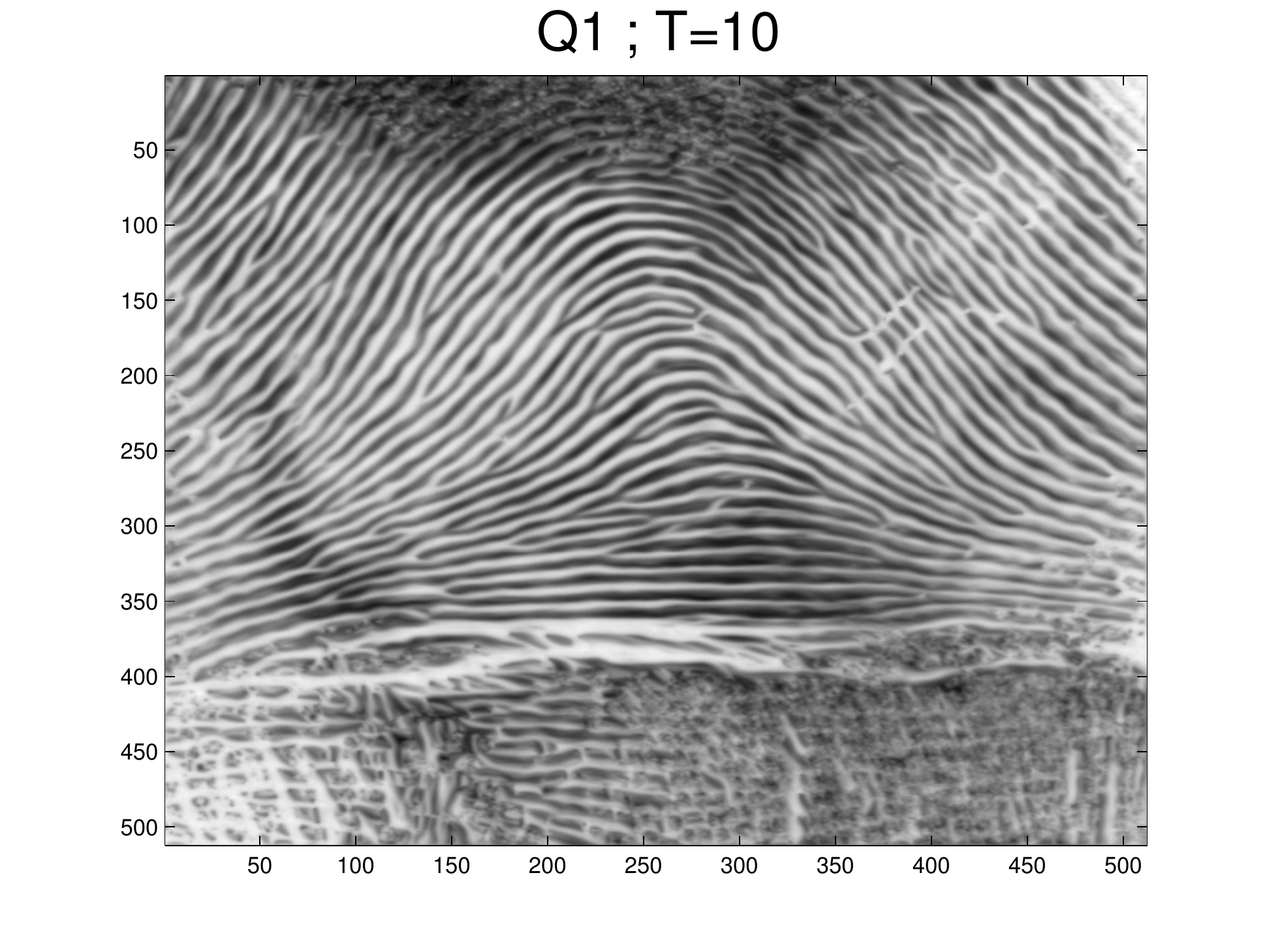}}
\subfigure[][WS]{\includegraphics[width=4.1cm]{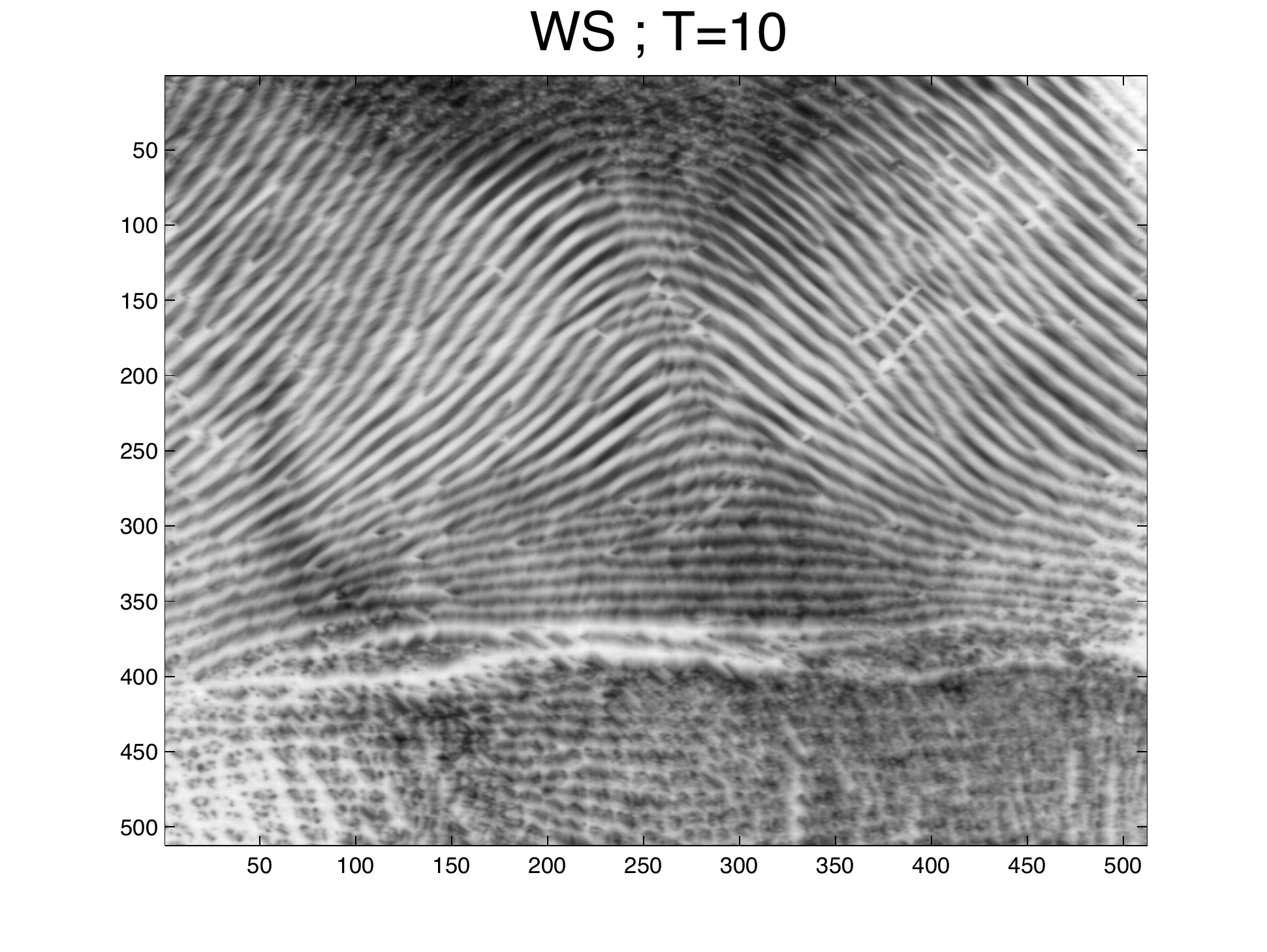}}
\subfigure[][W-NN]{\includegraphics[width=4.1cm]{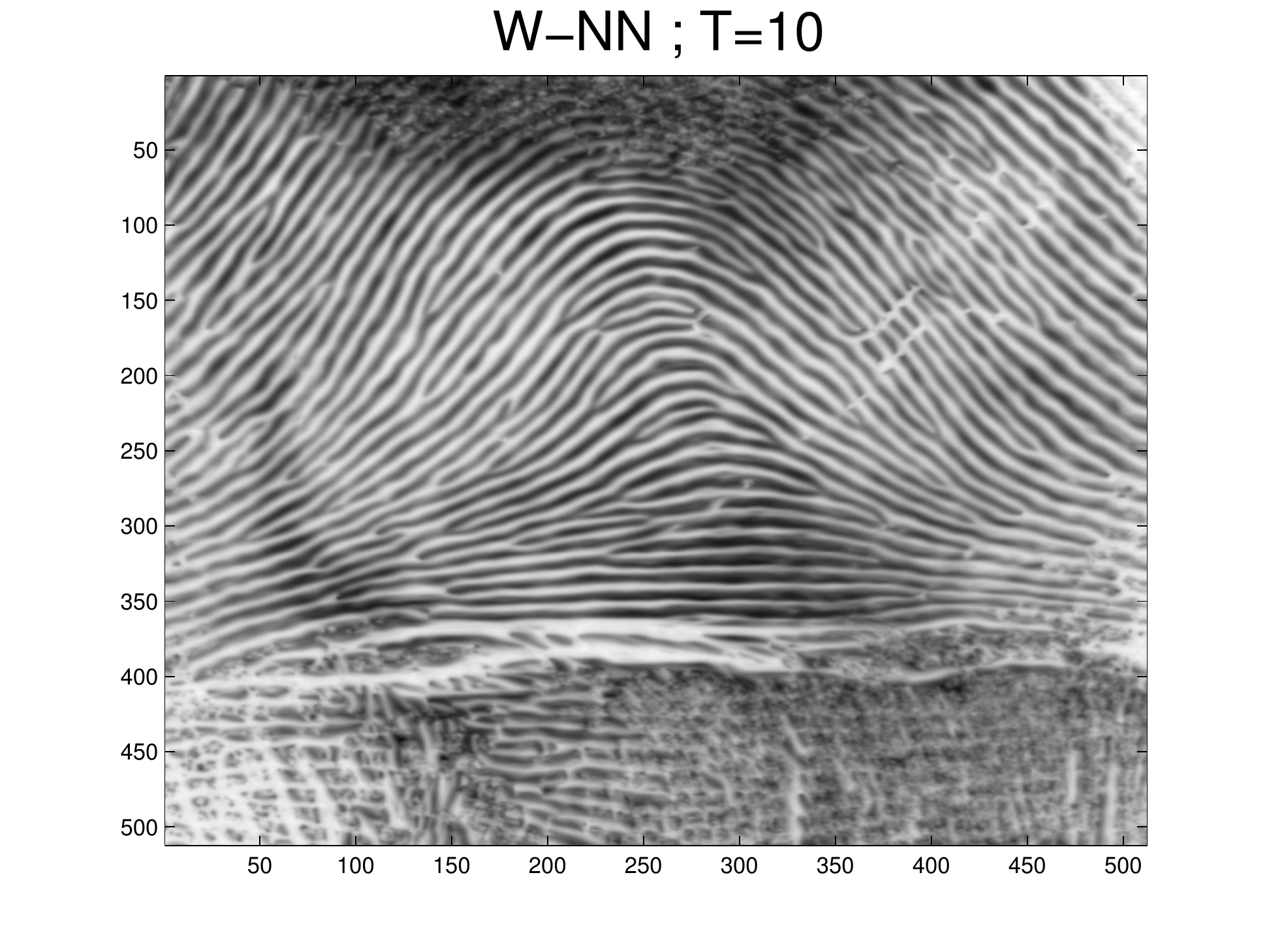}}
\subfigure[][\WG]{\includegraphics[width=4.1cm]{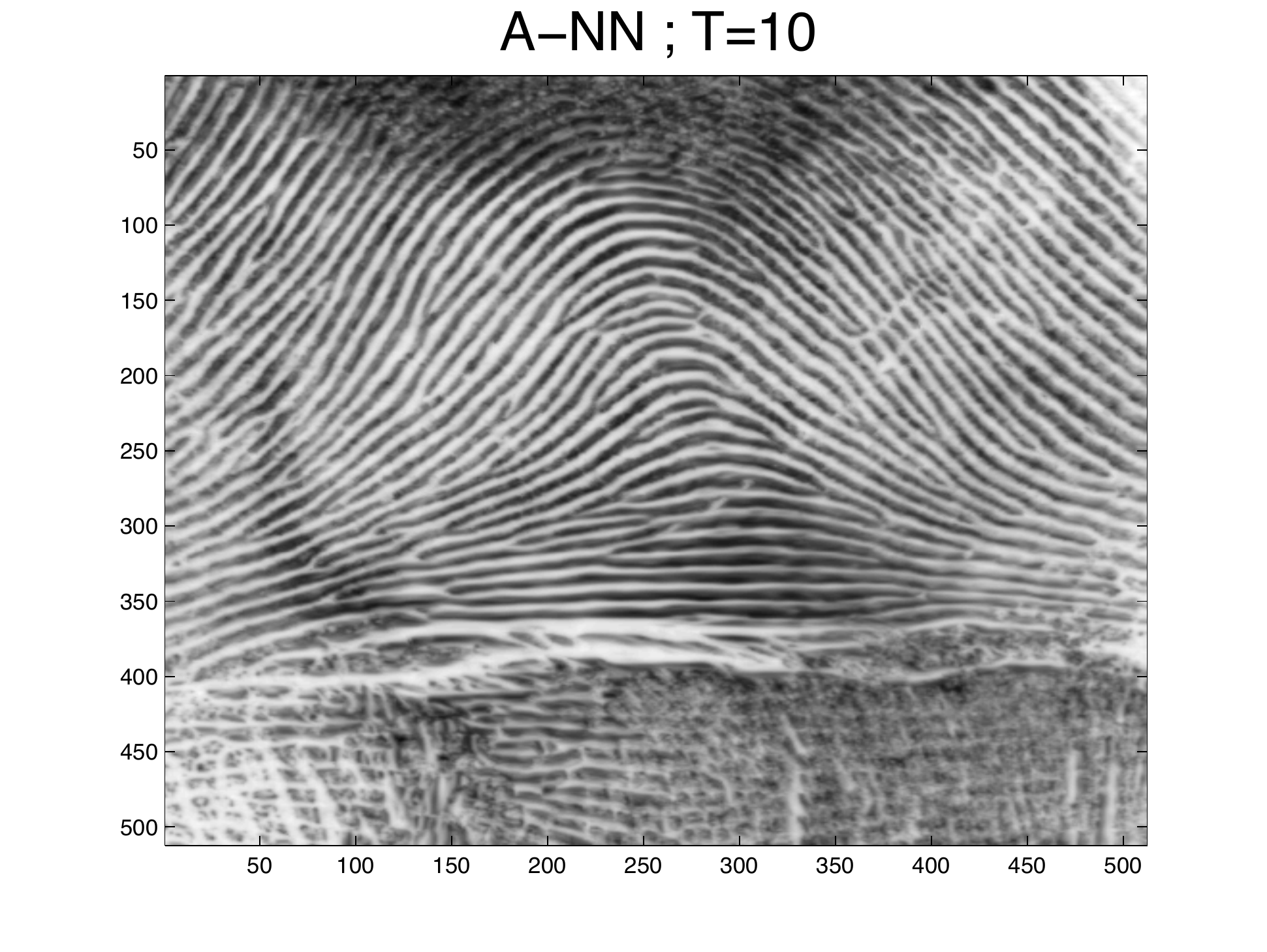}}
\caption{From top to bottom and from left to right: Original image (with two regions highlighted); diffused image using AD-LBR;  FD; Q1; WS; W-NN; \WG. Here $T=10$.}
\label{fig:fingerprint}
\end{centering}
\end{figure}

\begin{figure}[htb]
\begin{centering}
\subfigure[][Original image]{\includegraphics[width=4.1cm]{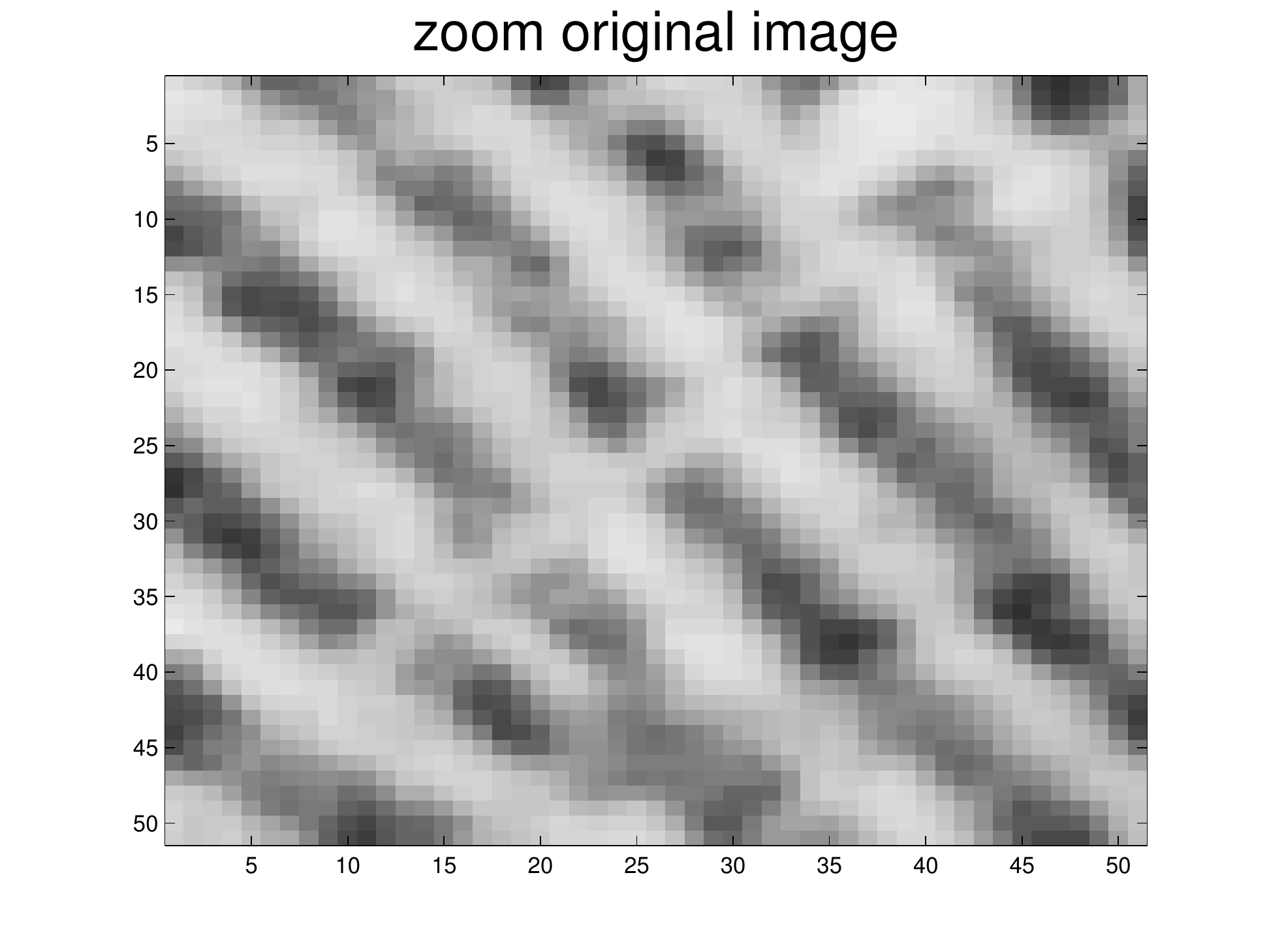}}
\subfigure[][AD-LBR]{\includegraphics[width=4.1cm]{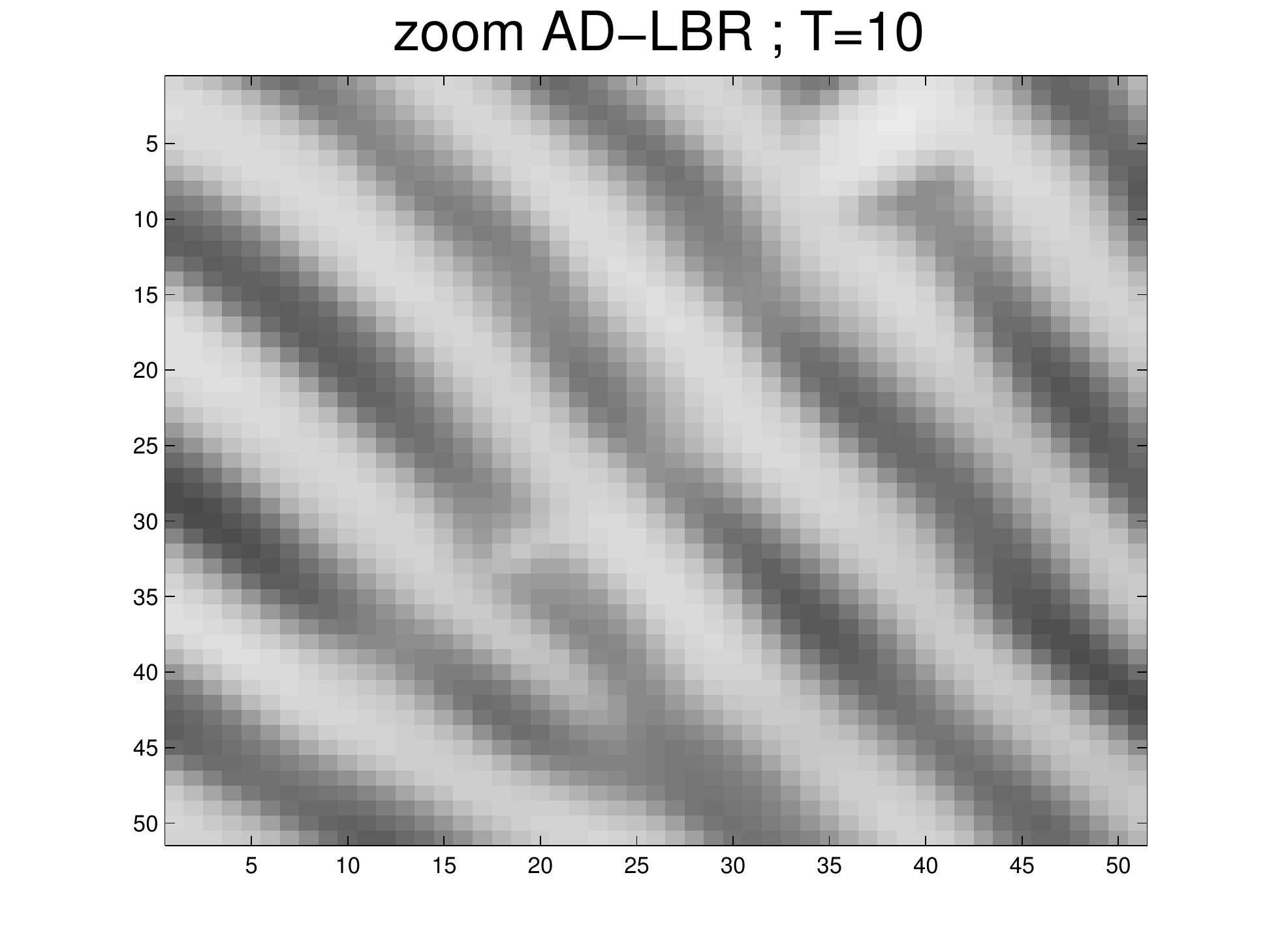}}
\subfigure[][FD]{\includegraphics[width=4.1cm]{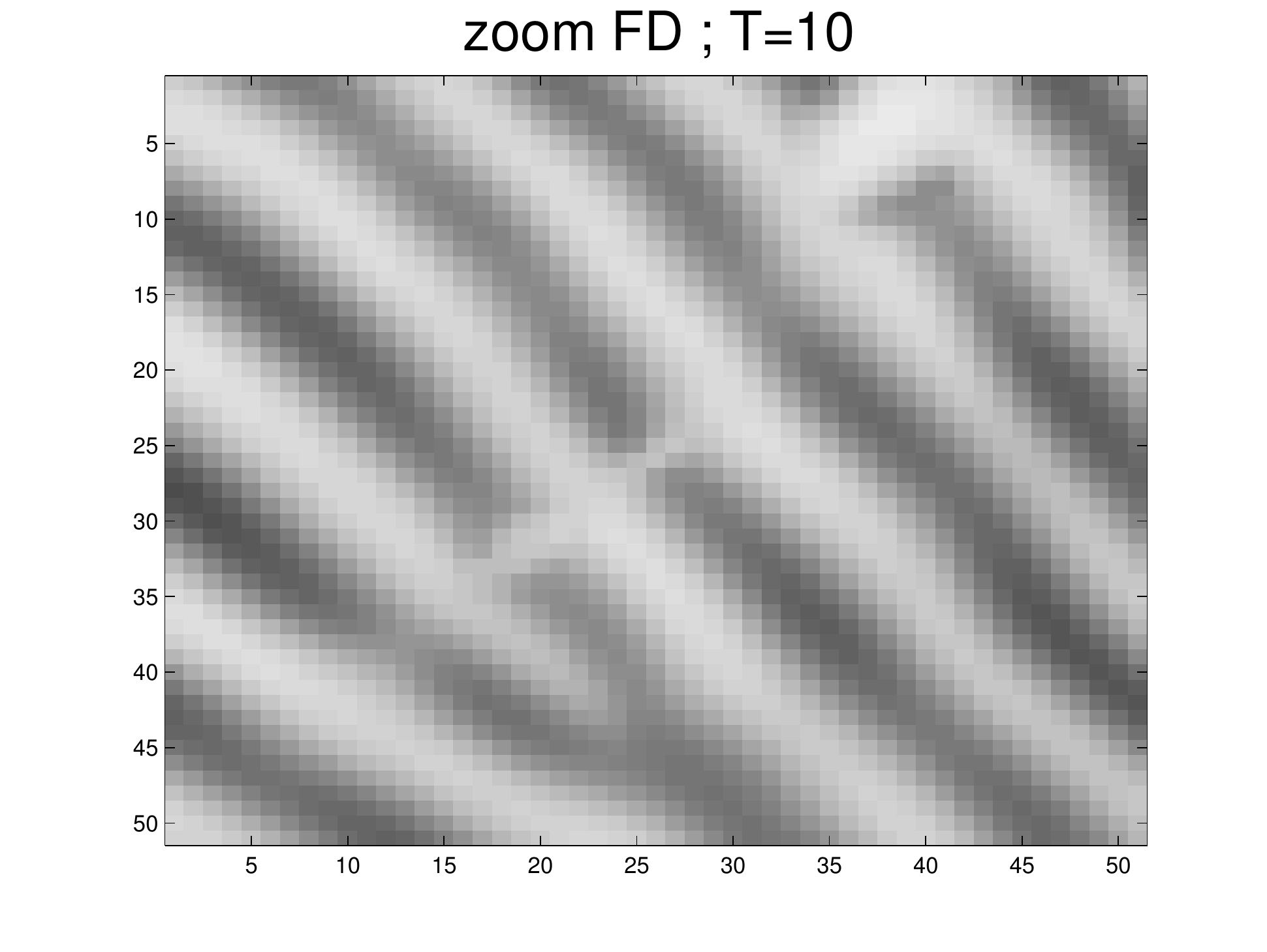}}
\subfigure[][Q1]{\includegraphics[width=4.1cm]{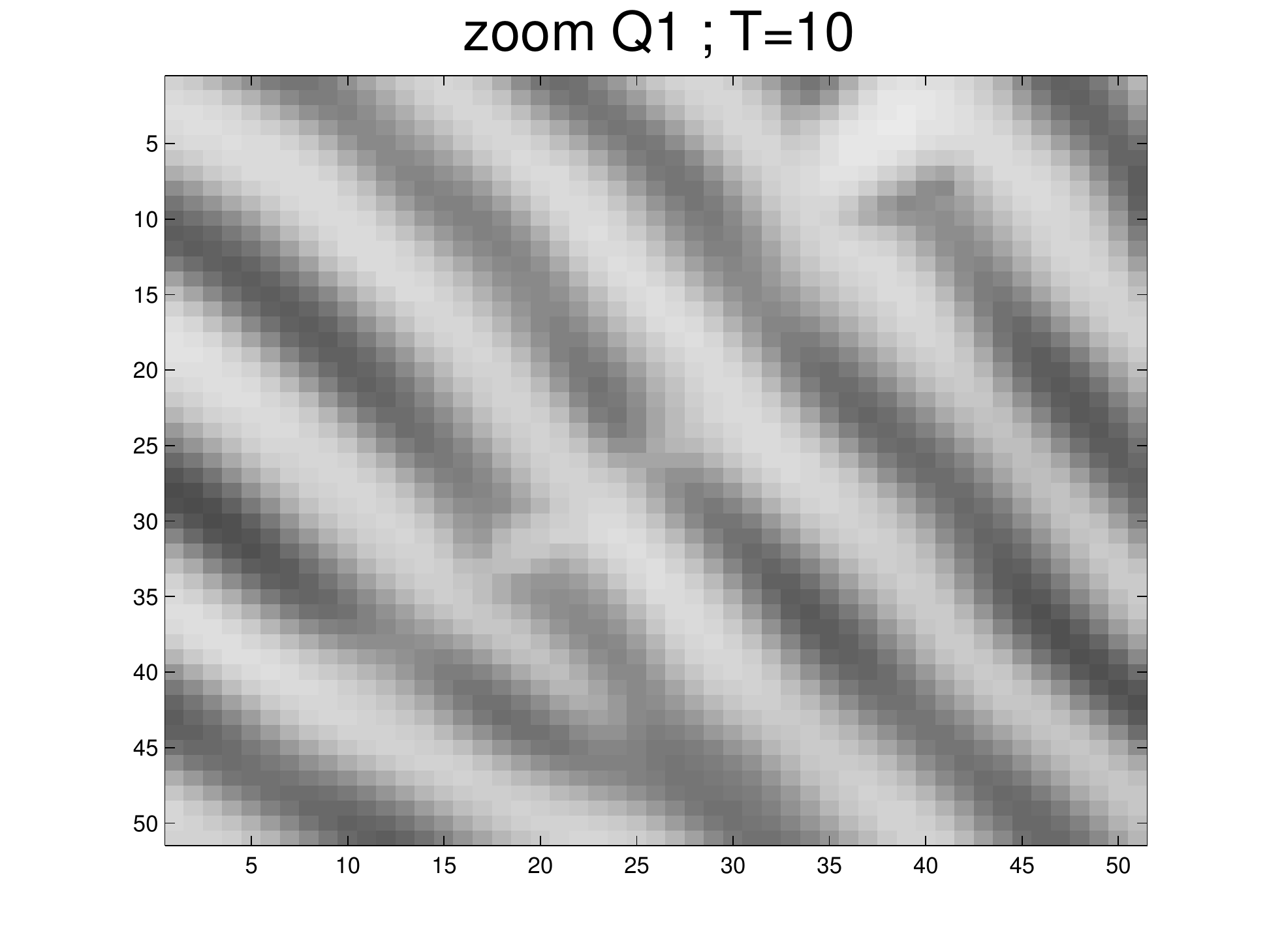}}
\subfigure[][WS]{\includegraphics[width=4.1cm]{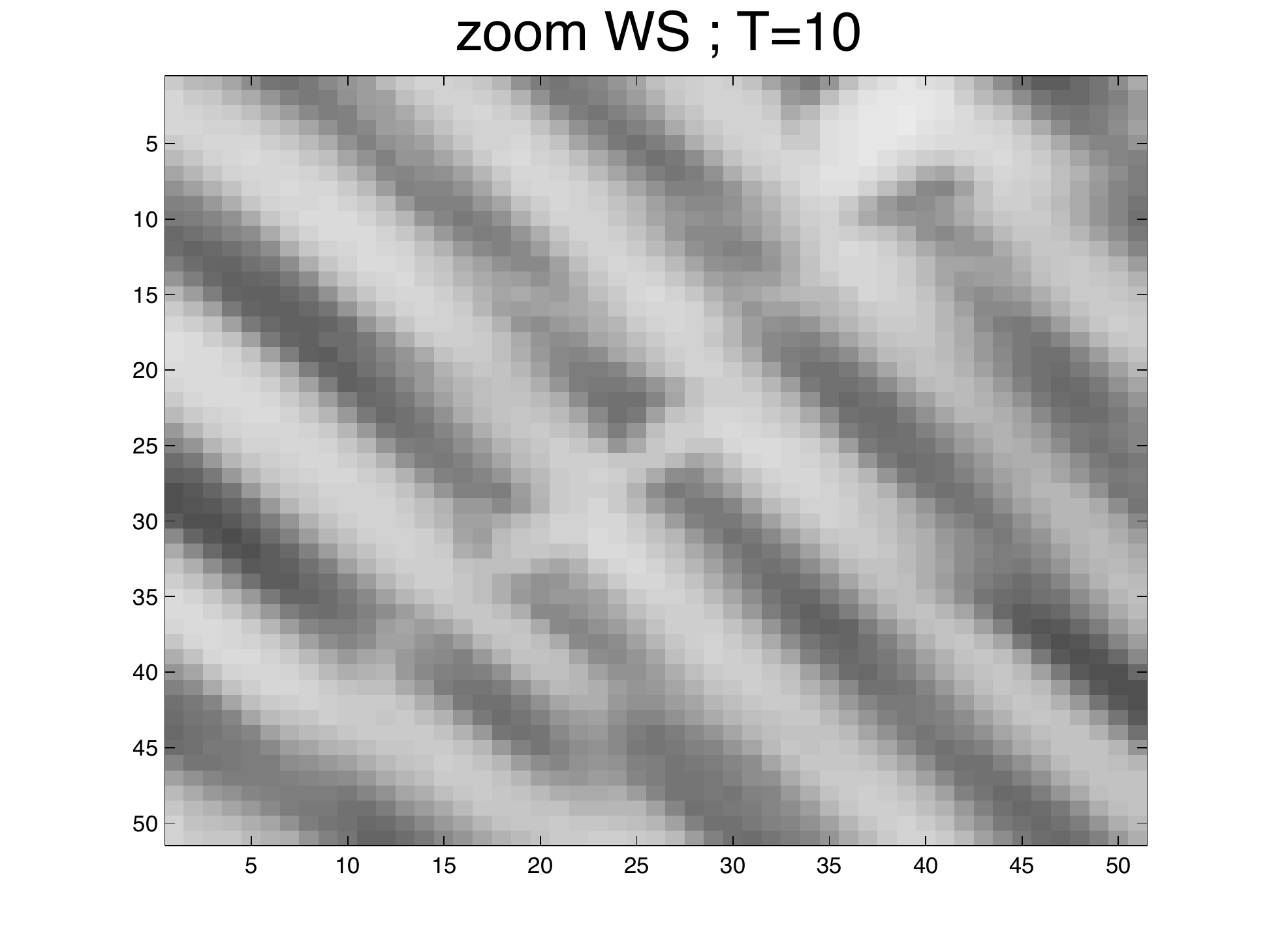}}
\subfigure[][W-NN]{\includegraphics[width=4.1cm]{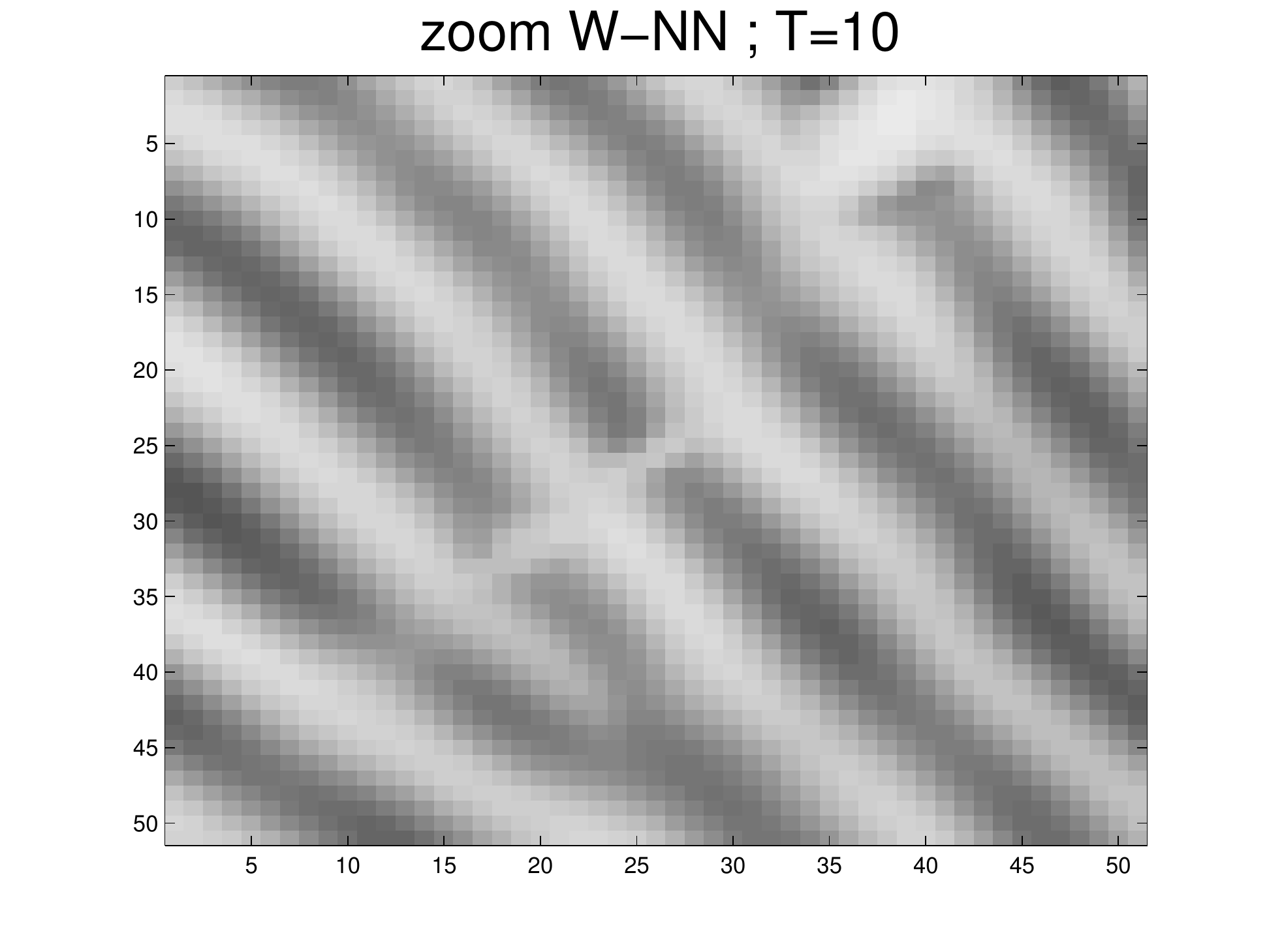}}
\subfigure[][\WG]{\includegraphics[width=4.1cm]{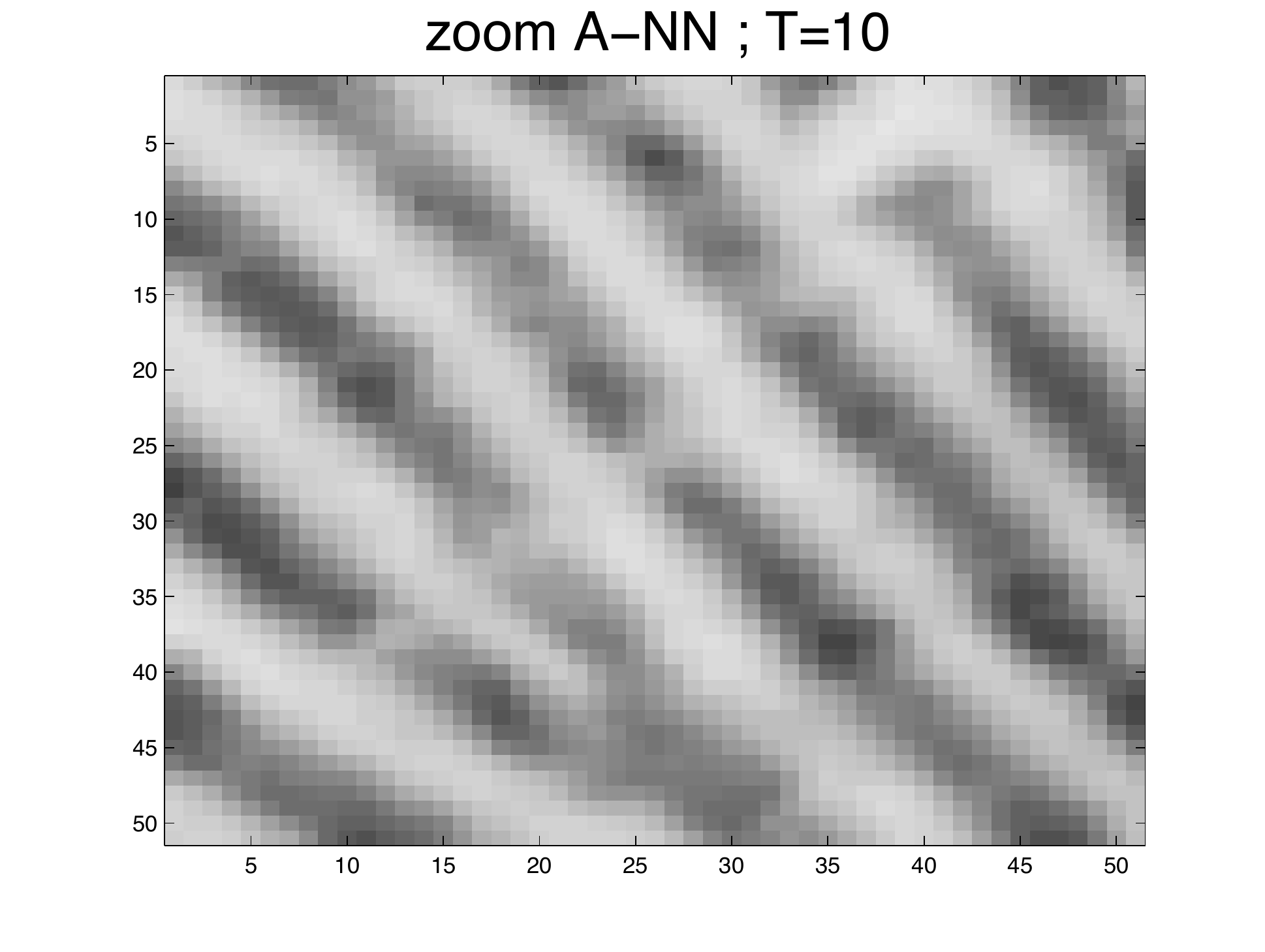}}
\caption{Detail of the region on the right. From top to bottom and from left to right: original image; diffused image using AD-LBR;  FD; Q1; WS; W-NN; \WG.}
\label{fig:fingerprintzoom}
\end{centering}
\end{figure}

\begin{figure}[htb]
\begin{centering}
\includegraphics[width=5.1cm]{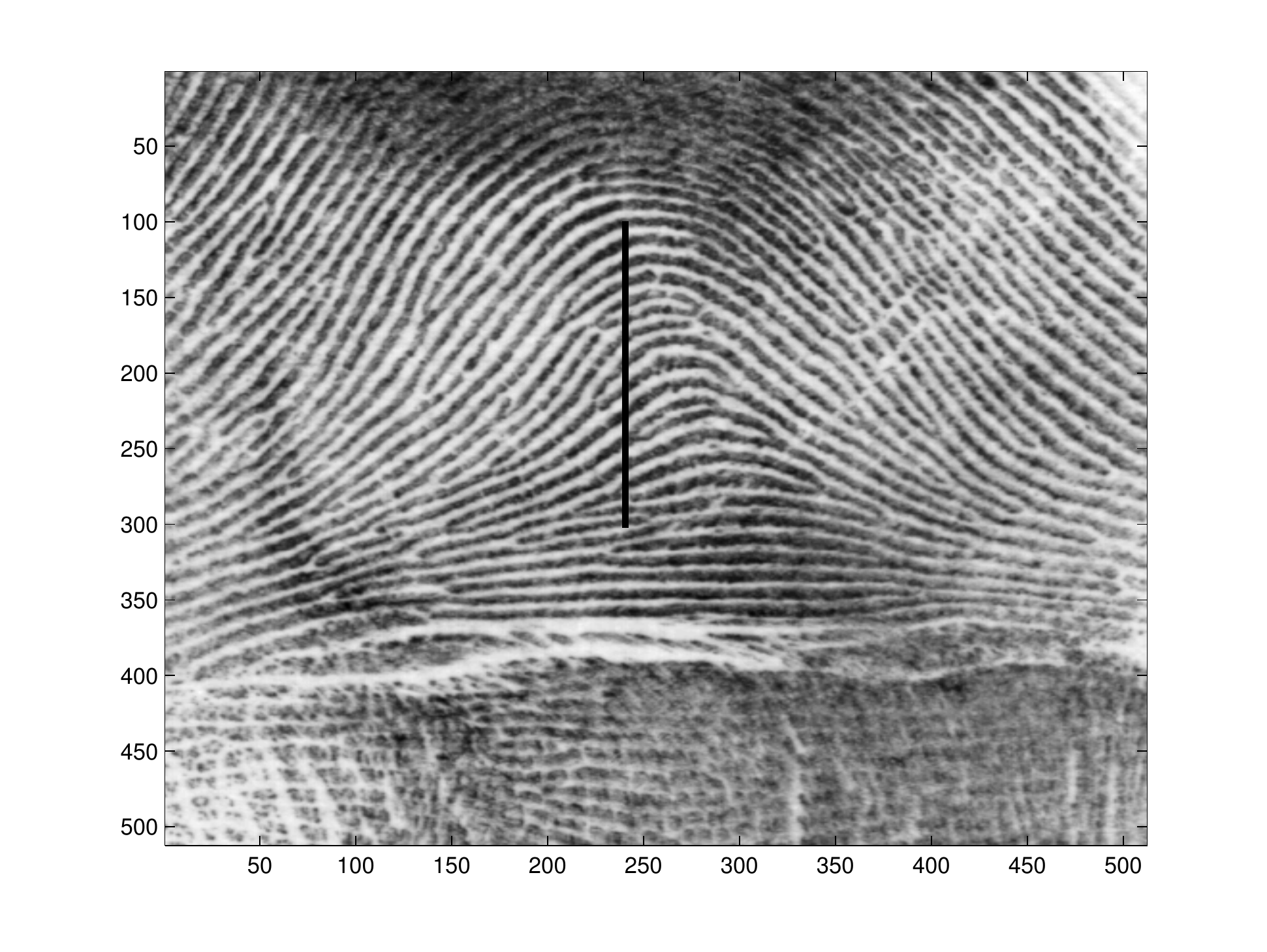}\\
\hbox{ \hspace{-0.8cm} {
\includegraphics[width=9.2cm]{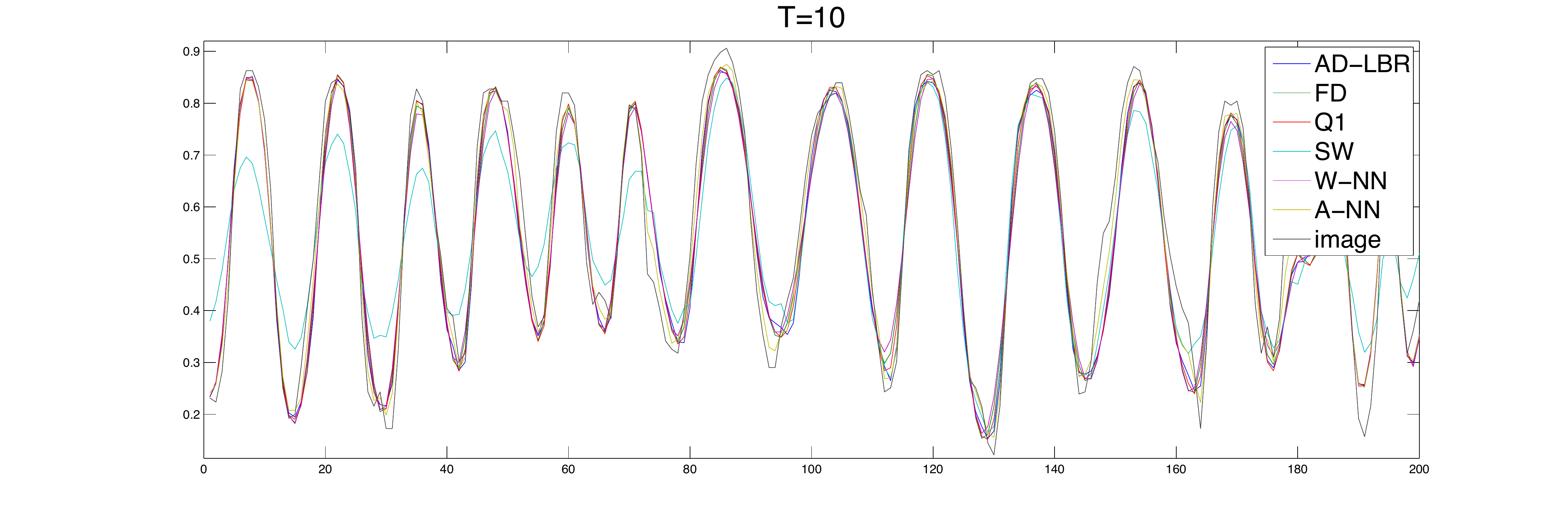}
}}
\hbox{ \hspace{-0.8cm} {
\includegraphics[width=9.2cm]{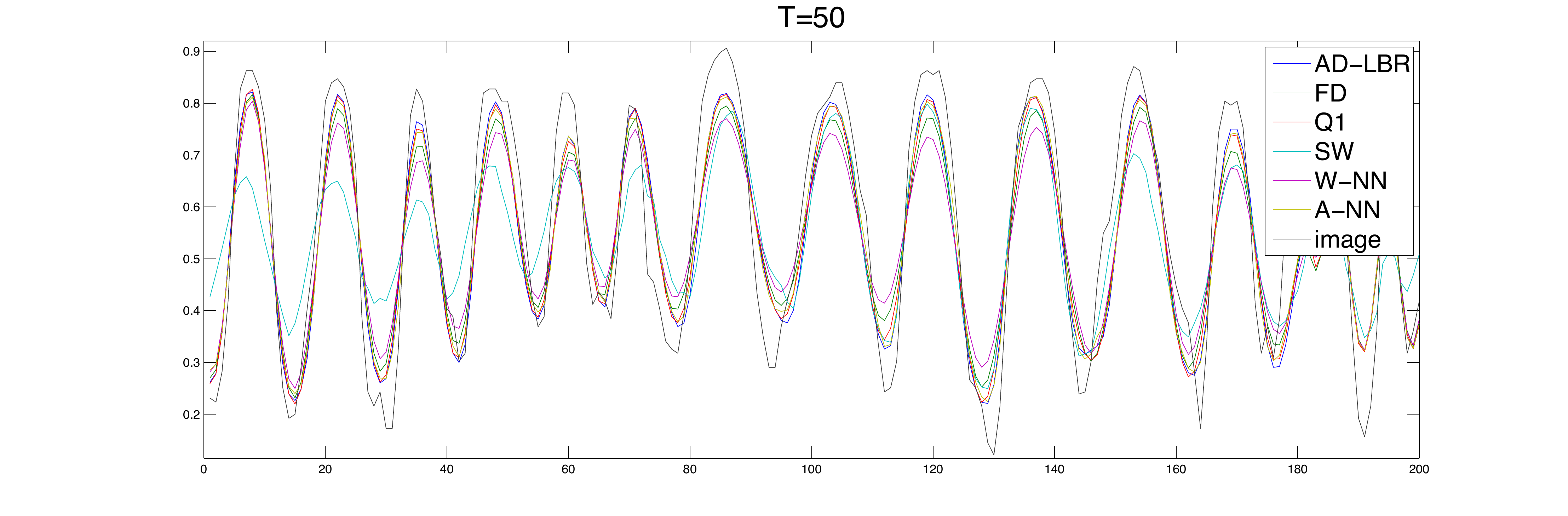}
}}
\hbox{ \hspace{-0.8cm} {
\includegraphics[width=9.2cm]{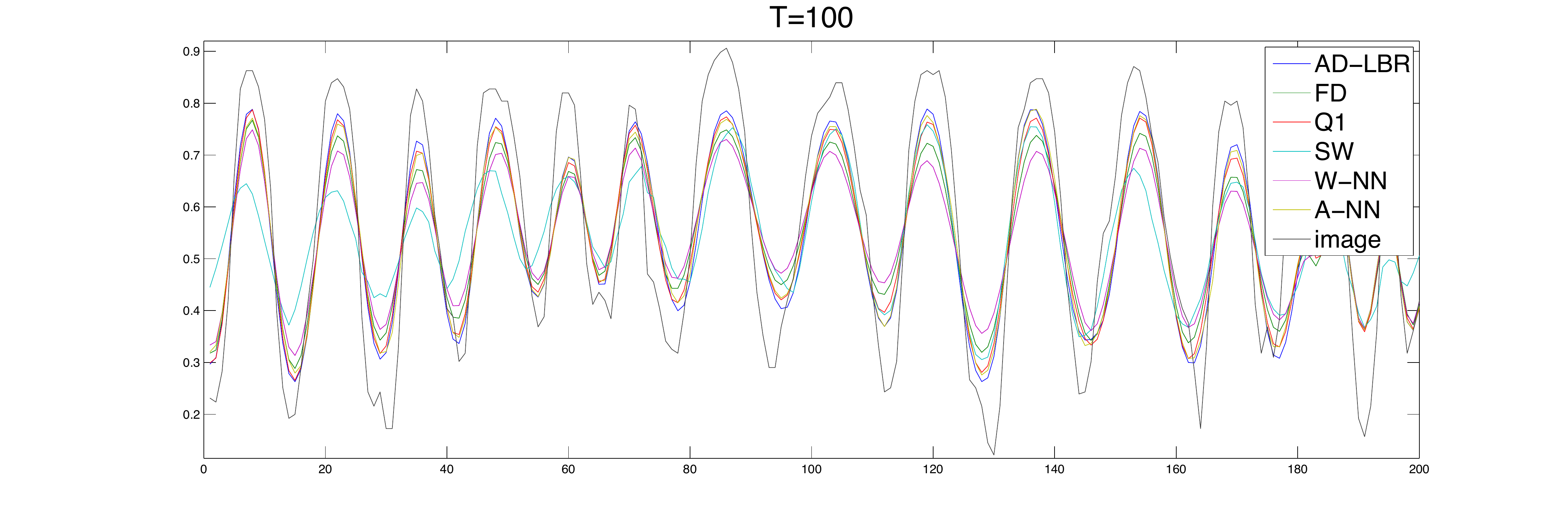}
}}
\caption{
Evolution  under CED of a section of the fingerprint image.
The ridges in the evolved image are higher, and the valleys are deeper, with AD-LBR than with the other schemes. 
This illustrates the fact that AD-LBR, respecting the continuous PDE, diffuses more along the structure and less in the orthogonal direction.
From top to bottom: location of the section of the image; section at $T=10$; section at $T=50$; section at $T=100$.}\label{fig:coupes}
\end{centering}
\end{figure}

\subsection{3-dimensional experiments}
\label{sec:co-en-dif3D}

In order to illustrate the feasibility of our scheme in 3D space, we present the action of anisotropic diffusion PDEs on two examples. The first example is a 3D analog of the synthetic test case presented in \cite{WS02}, featuring Coherence-Enhancing Diffusion. The second one is the application of Edge-Enhancing Diffusion to a MRI scan.

%In order to illustrate the feasibility of our scheme in 3D space, we present a synthetic example of plane-enhancing diffusion, and an example of edge-enhancing diffusion of a MRI scan. 

\paragraph{Synthetic example\\}

%The original image is a radially varying image in the cube $[0,1]^3$. 
The original, radially varying image is defined on the cube $[0,1]^3$. 
The gray-level at a point $x$ is defined by  
\begin{equation*}
u^{0}(x)=\cos\left(2(r/R)^3\right),
\end{equation*}
where $r:=|x|$ and $R:=1/2$. % is a characteristic size of the oscillations. 
This image presents a series of concentric level-sets. We present in Figure \ref{fig:3Doriginal} the level sets $\{u^0 = 0\}$, and a slice through the plane $z=0.7$.

The image $u^0$ is perturbed by
\begin{equation*}
u:=u^{0}+n,
\end{equation*}
where $n$ is an additive Gaussian noise of variance $\sigma=0.5$. 
The reconstructed image is obtained using a 3D Coherence-Enhancing Diffusion PDE \cite{W98}, similar to the 2D one in section \ref{sec:co-en-dif}:
\begin{equation*} %\label{eq:parabolique}
\partial_t u=\diver(\DD(J_\rho(\nabla u_\sigma))\nabla u),
\end{equation*}
where $J_\rho$ is the structure tensor defined by $J_\rho :=K_\rho\star(\nabla u_\sigma \nabla u_\sigma^T)$, $u_\sigma :=K_\sigma\star u$. The tensor $\DD(J_\rho)$ possesses the same eigenvectors $(v_1,v_2,v_3)$ as $J_\rho$, and if the eigenvalues of $J_\rho$ are $\mu_1\ge \mu_2\ge \mu_3$ then the eigenvalues of $\DD(J_\rho)$ are %\left\{
%\begin{align*}
%%\left\{
%\lambda_1 &:=\alpha\\
%\lambda_2 &:=\alpha+(1-\alpha)\exp\left(\dfrac{-C}{(\mu_1-\mu_2)^2}\right),\\
%\lambda_3 &:=\alpha+(1-\alpha)\exp\left(\dfrac{-C}{(\mu_1-\mu_3)^2}\right),
%%\right.
%\end{align*}
\begin{equation*}
\lambda_1 :=\alpha
\end{equation*}
\begin{equation*}
\lambda_2 :=\alpha+(1-\alpha)\exp\left(\dfrac{-C}{(\mu_1-\mu_2)^2}\right),
\end{equation*}
\begin{equation*}
\lambda_3 :=\alpha+(1-\alpha)\exp\left(\dfrac{-C}{(\mu_1-\mu_3)^2}\right),
\end{equation*}

where $\alpha=10^{-2}$. The anisotropy ratio is bounded by $\kappa = 1/\sqrt{\alpha} = 10$.
%This ensures that one smoothes preferably along the level-lines $Vect(v_2,v_3)$. 
We used the values $\sigma=0.5$, $\rho=4$. 
The problem is discretized using $100^3$ voxels.
We present in Figure \ref{fig:3Doriginal} the noisy image $u$ (levelset 0 and planar slice) and  the result after 20 time-steps of $\Delta t=10^{-3}$.

\begin{figure*}
\begin{centering}
\includegraphics[width=6.3cm]{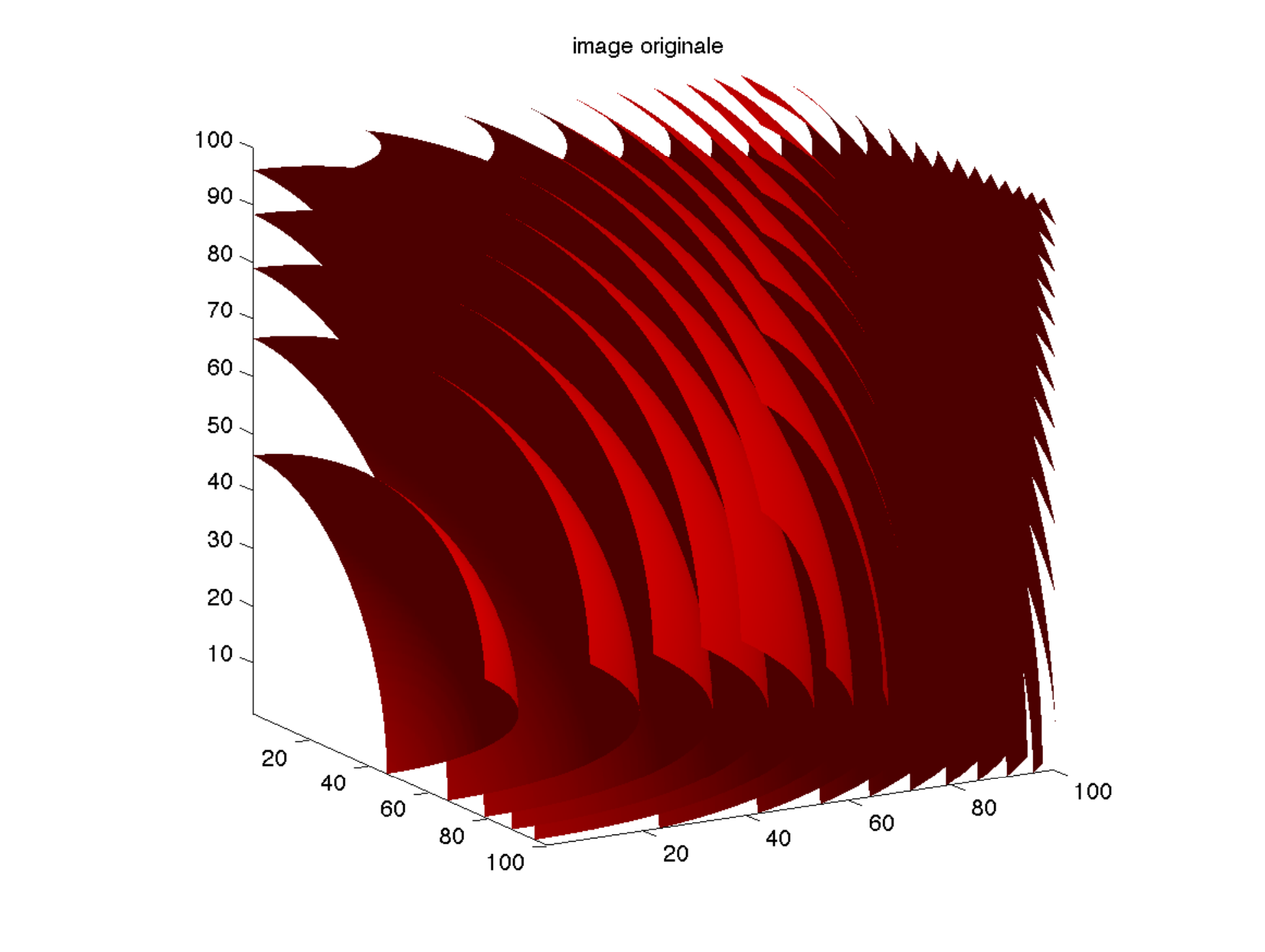}
\hspace{-1.cm}
\includegraphics[width=6.3cm]{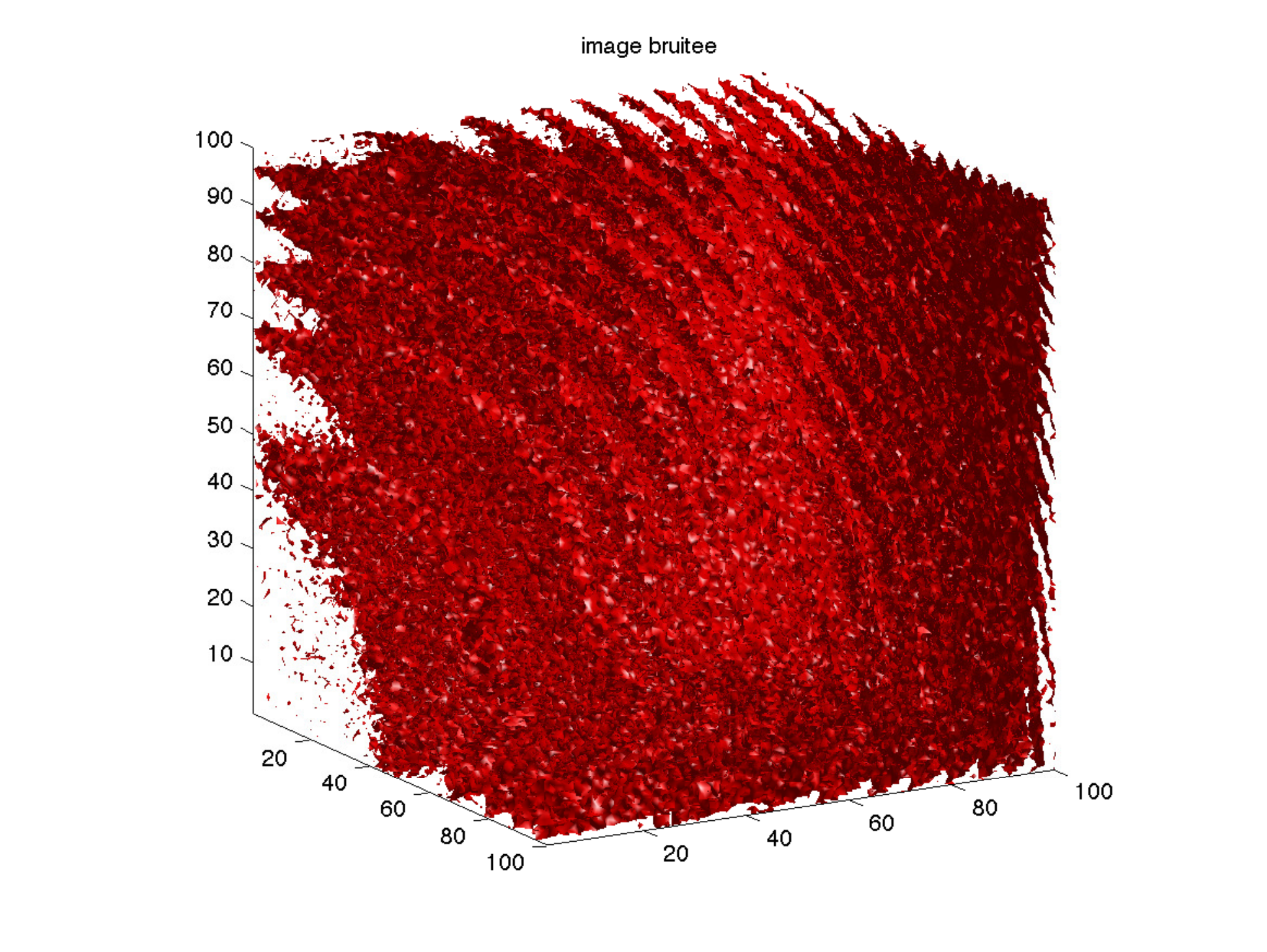}
\hspace{-1.cm}
\includegraphics[width=6.3cm]{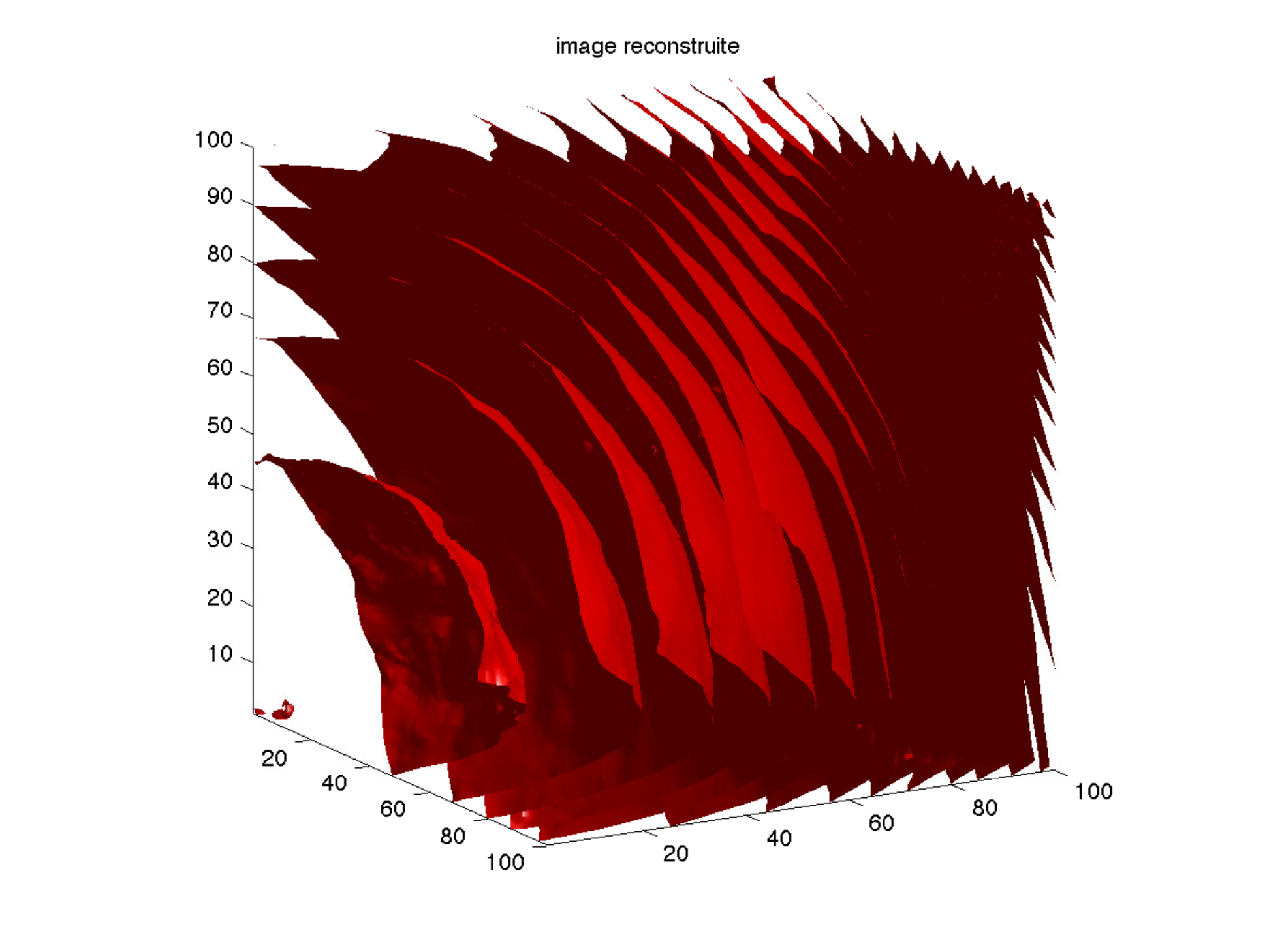}
\\
\includegraphics[width=5.1cm]{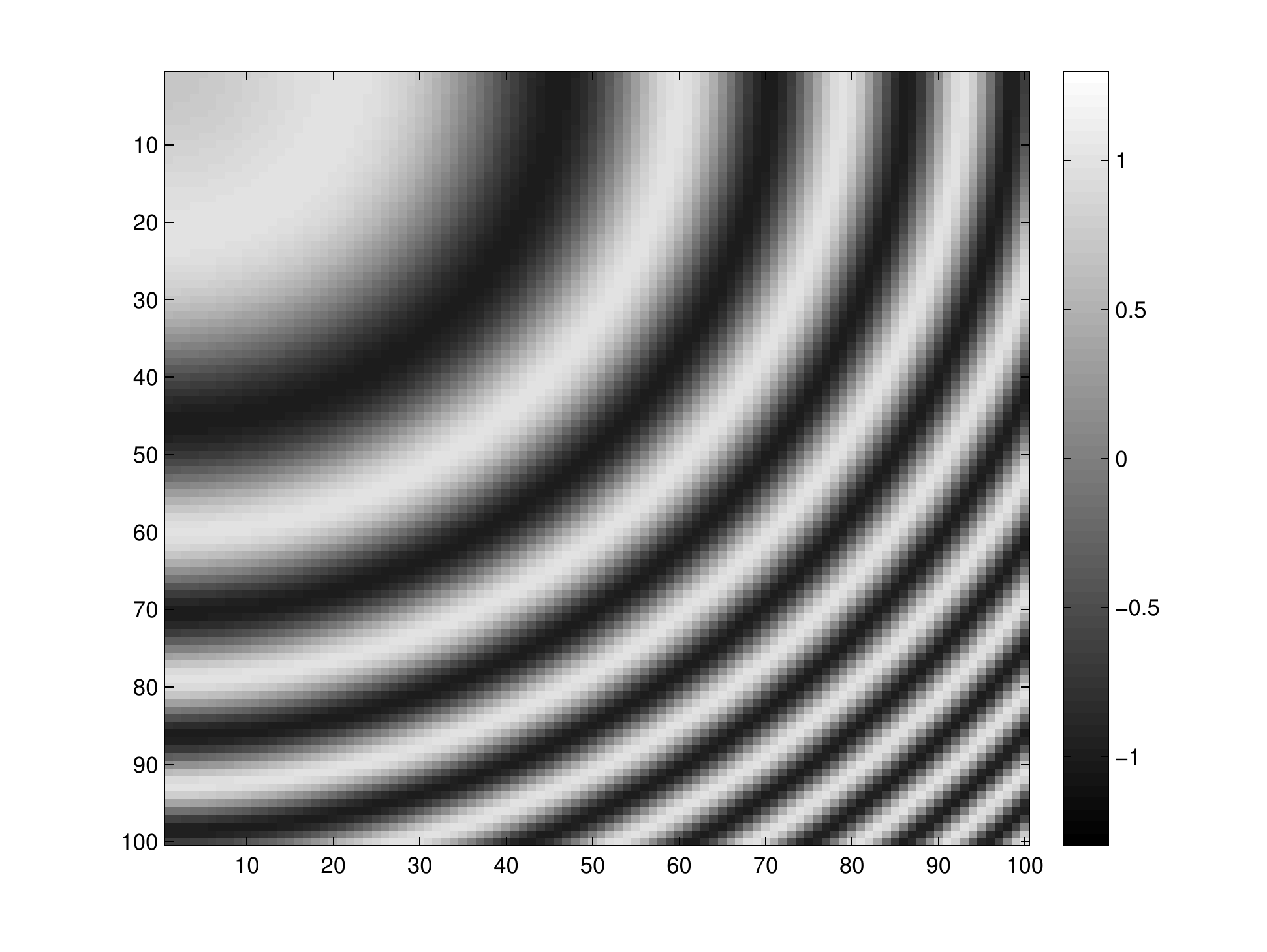}
\includegraphics[width=5.1cm]{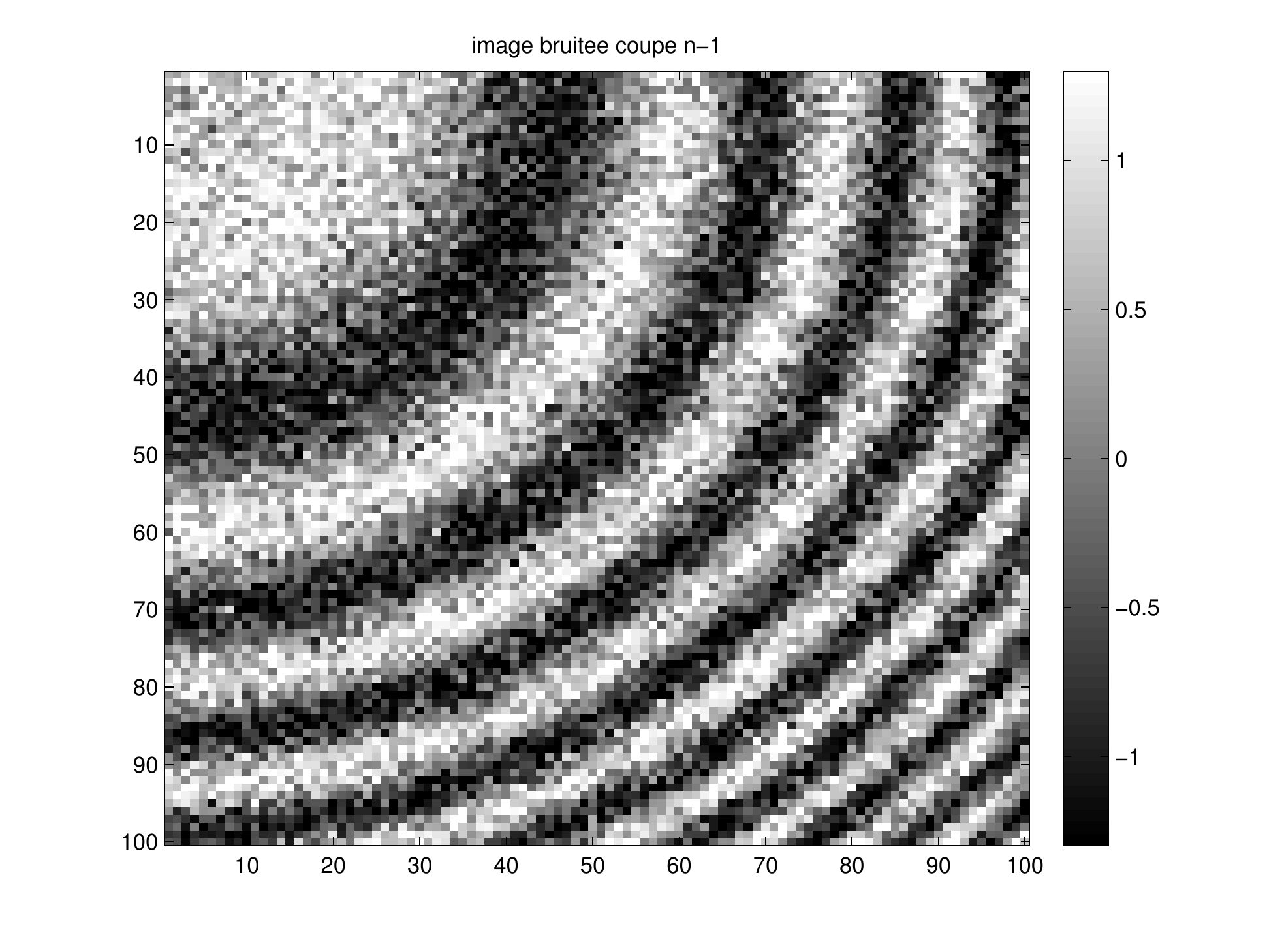}
\includegraphics[width=5.1cm]{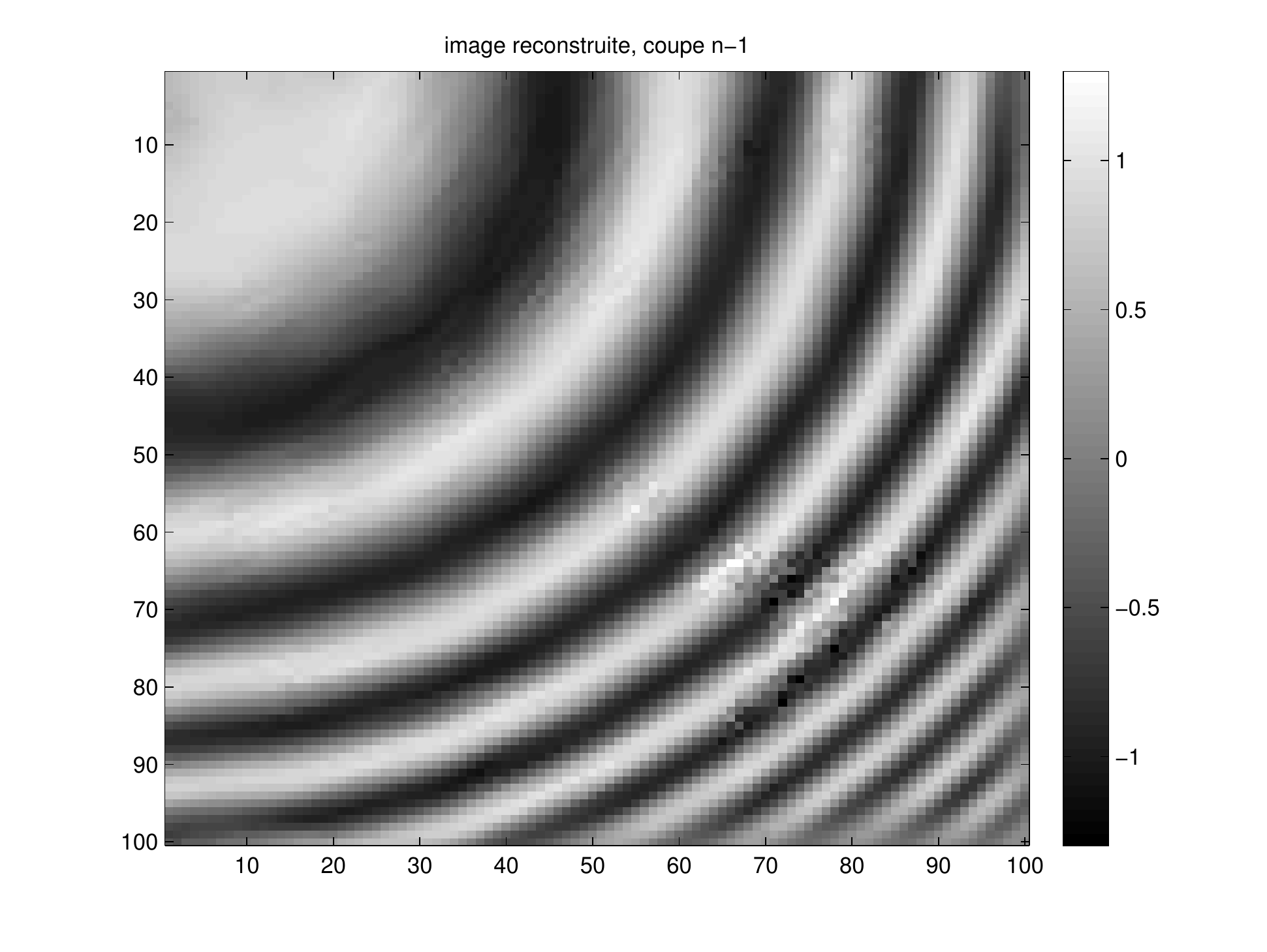}
\caption{
Levelset 0 (top) and slice (bottom) of a 3D image. Original (left), noisy (center), and reconstructed (right) images.
Slice in the plane $z=0.7$, with values clipped to the range $[-1.3,1.3]$.
}
\label{fig:3Doriginal}
\end{centering}
\end{figure*}

\paragraph{3D MRI data\\}

The data is a $256\times256\times100$ Magnetic Resonance Imaging scan of a skull, and was obtained from the "University of North Carolina Volume Rendering Test Data Set" archive.

The reconstructed image is obtained using a 3D Edge-Enhancing Diffusion PDE \cite{W98}, which differs from the above Coherence-Enhancing Diffusion one by the choice of the diffusion tensor eigenvalues. The optimal choice of these eigenvalues indeed depends on the application, and is still an active subject of research \cite{MVRV09}.
%similar to the 2D one described in section \ref{sec:co-en-dif}, with a difference in the definition of the eigenvalues of the diffusion tensor:
%\begin{equation*} %\label{eq:parabolique}
%\partial_t u=\diver(\DD(J_\rho(\nabla u_\sigma))\nabla u),
%\end{equation*}
%where $J_\rho$ is the structure tensor defined by $J_\rho=K_\rho\star(\nabla u_\sigma \nabla u_\sigma^T)$, $u_\sigma=K_\sigma\star u$. The di
With the above notations, the eigenvalues of $\DD(J_\rho)$ are %\left\{
\begin{align*}
%\left\{
\lambda_1 & :=1-\exp\left(\dfrac{-C}{\mu_1^2}\right)\\
\lambda_2 & :=1-\exp\left(\dfrac{-C}{\mu_2^2}\right),\\
\lambda_3 & :=1.
%\right.
\end{align*}
We used the values $\sigma=0.5$, $\rho=4$. In our experiment, the maximum anisotropy ratio was $\kappa = 11.2$.
We present in Figure \ref{fig:3Dmri} the original image and two slices of the result after 10 time-steps of $\Delta t=10^{-4}$.

\begin{figure*}
\begin{center}
%{\raise 0.3cm \hbox{
\includegraphics[width=4.2cm]{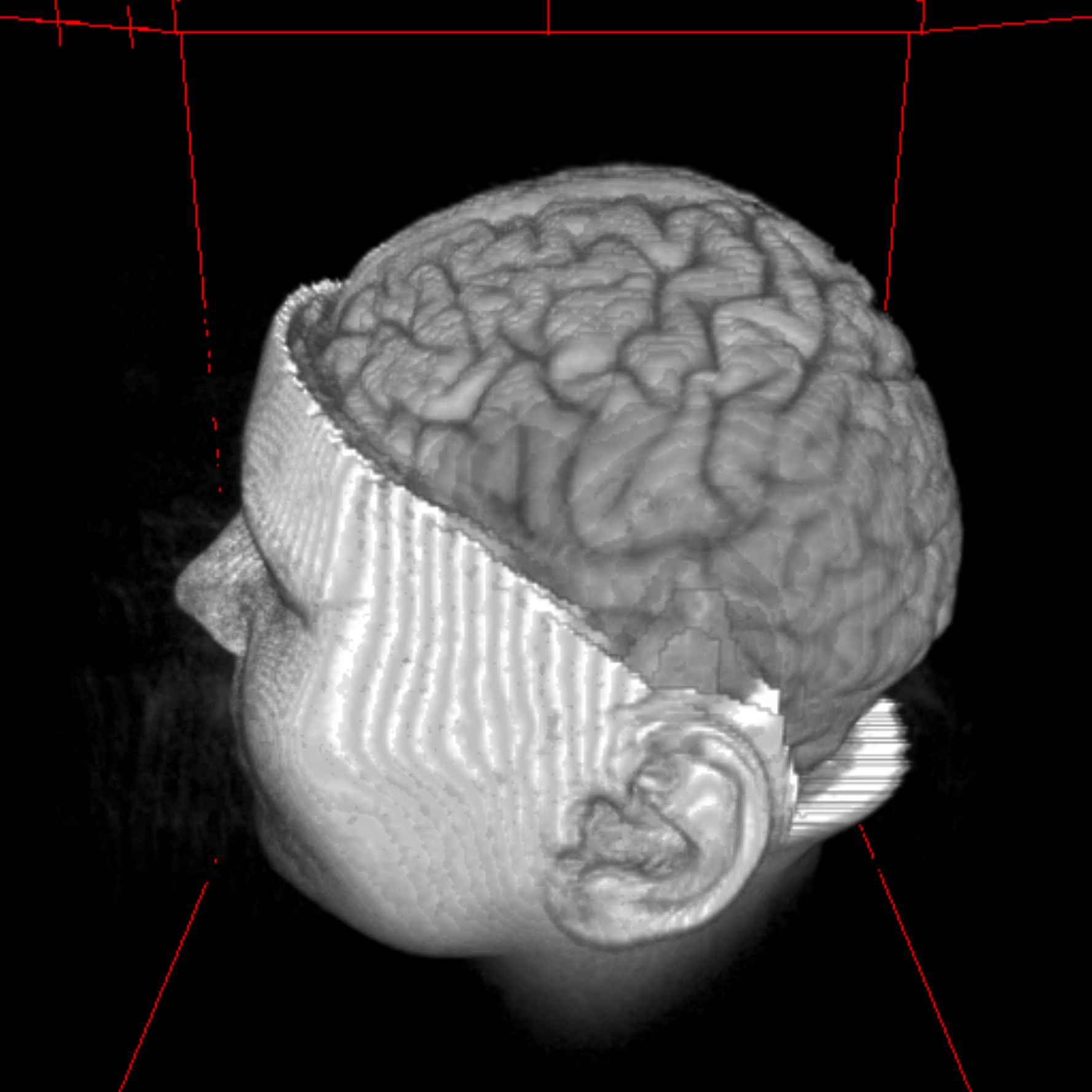}
%}}
\hspace{-0.cm}
\includegraphics[width=6.5cm]{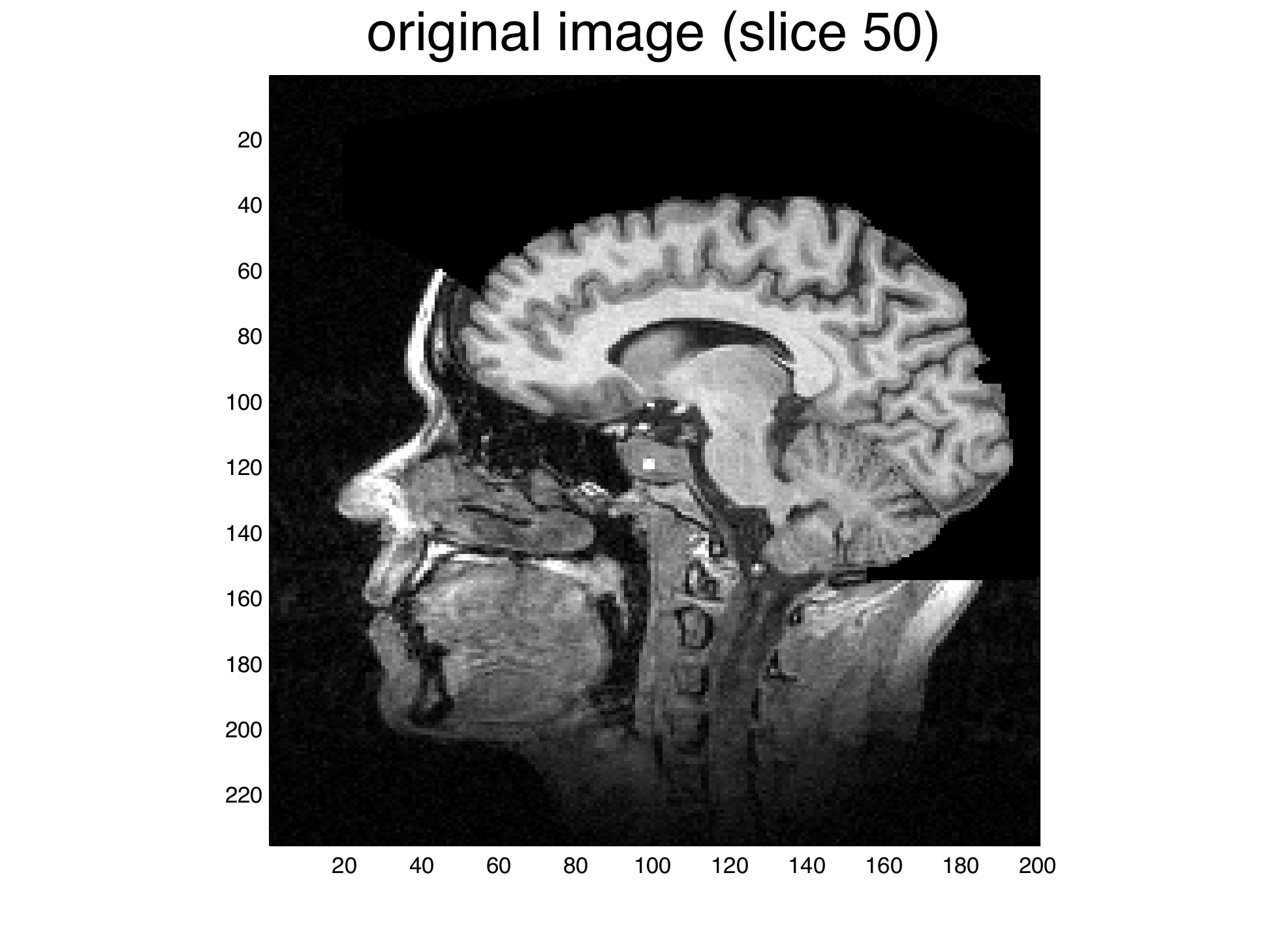}
\hspace{-1.3cm}
\includegraphics[width=6.5cm]{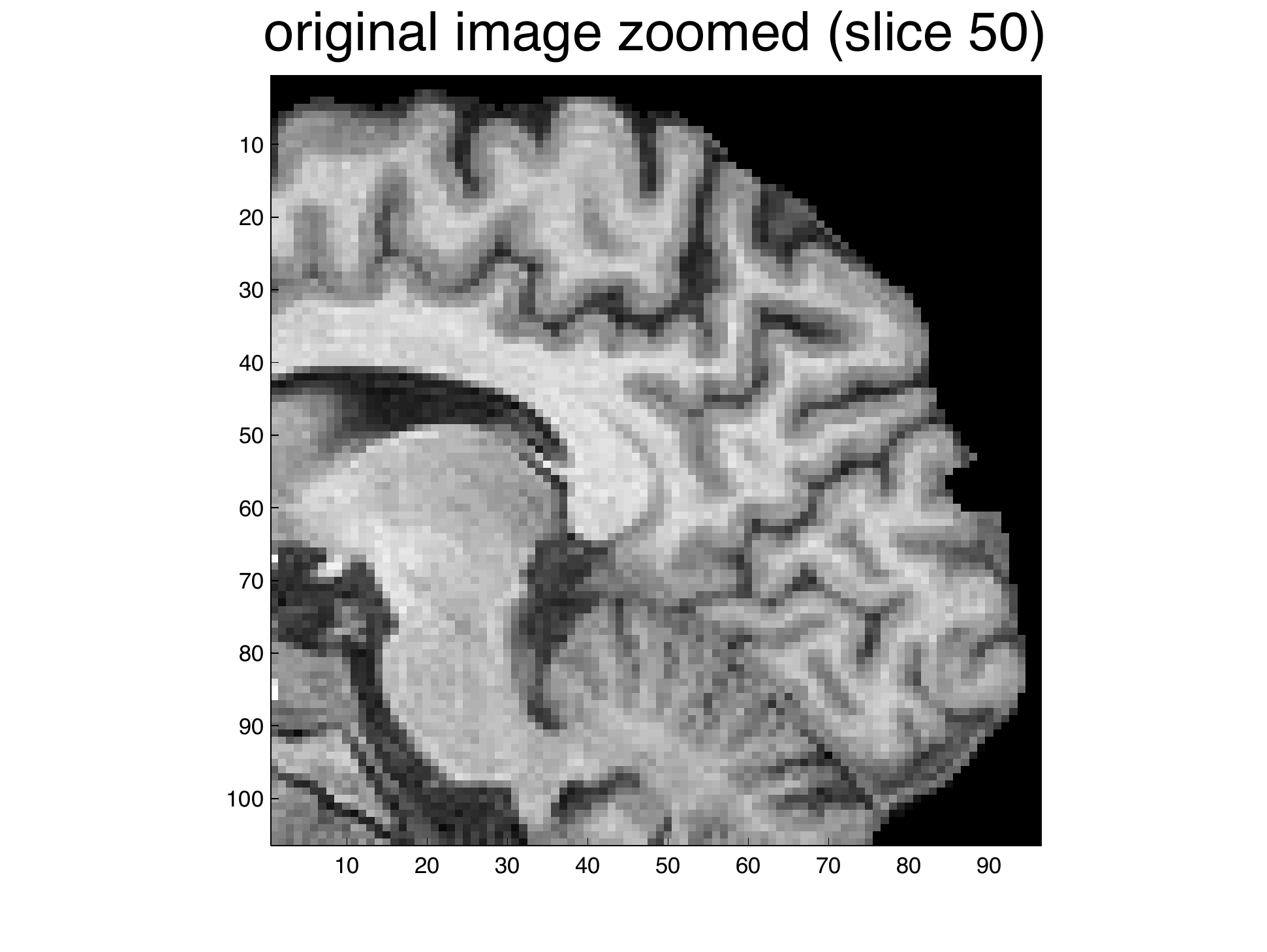}
\hspace{-1cm}
\end{center}

\begin{center}
\includegraphics[width=4.2cm]{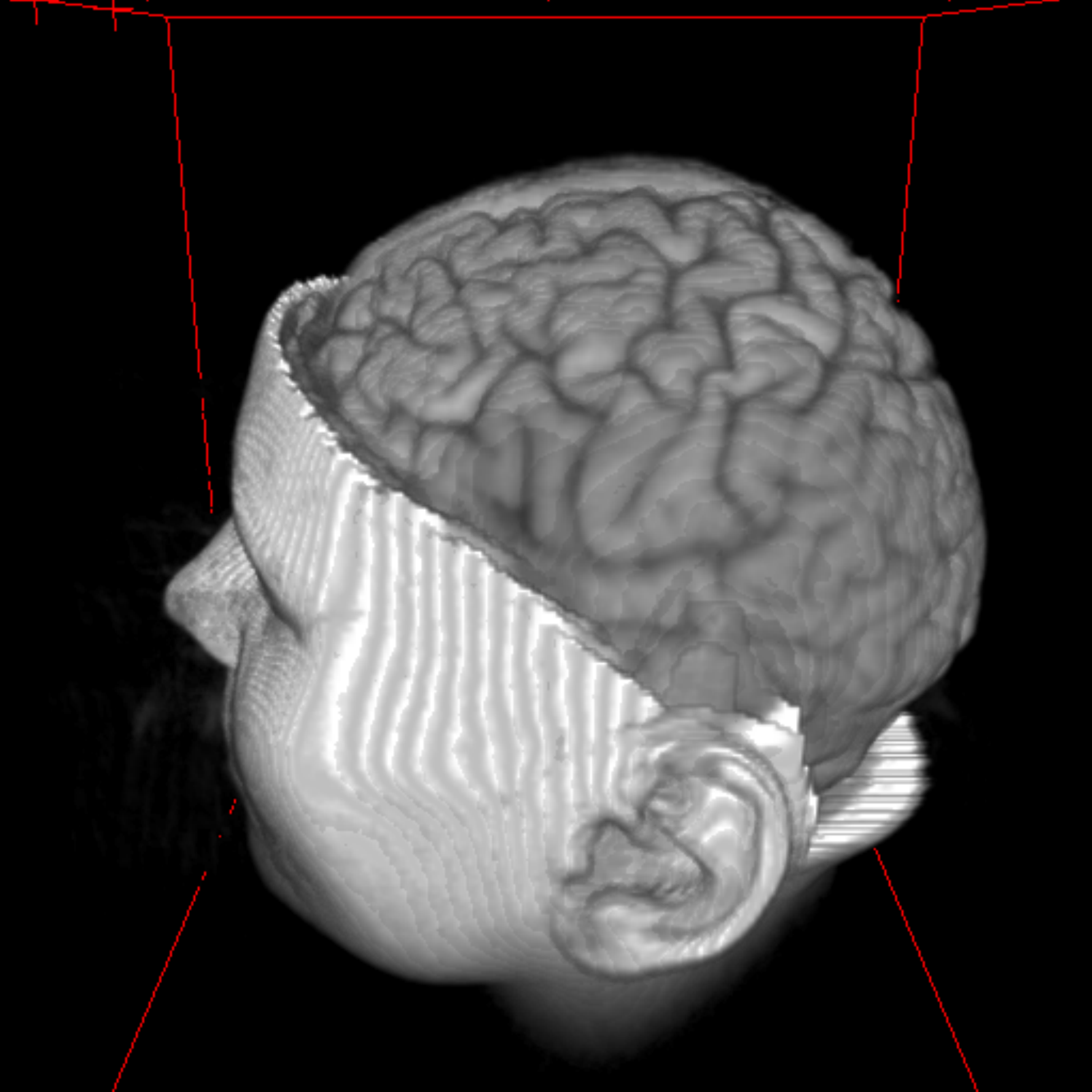}
\hspace{-0.cm}
%\hspace{-1.cm}
\includegraphics[width=6.5cm]{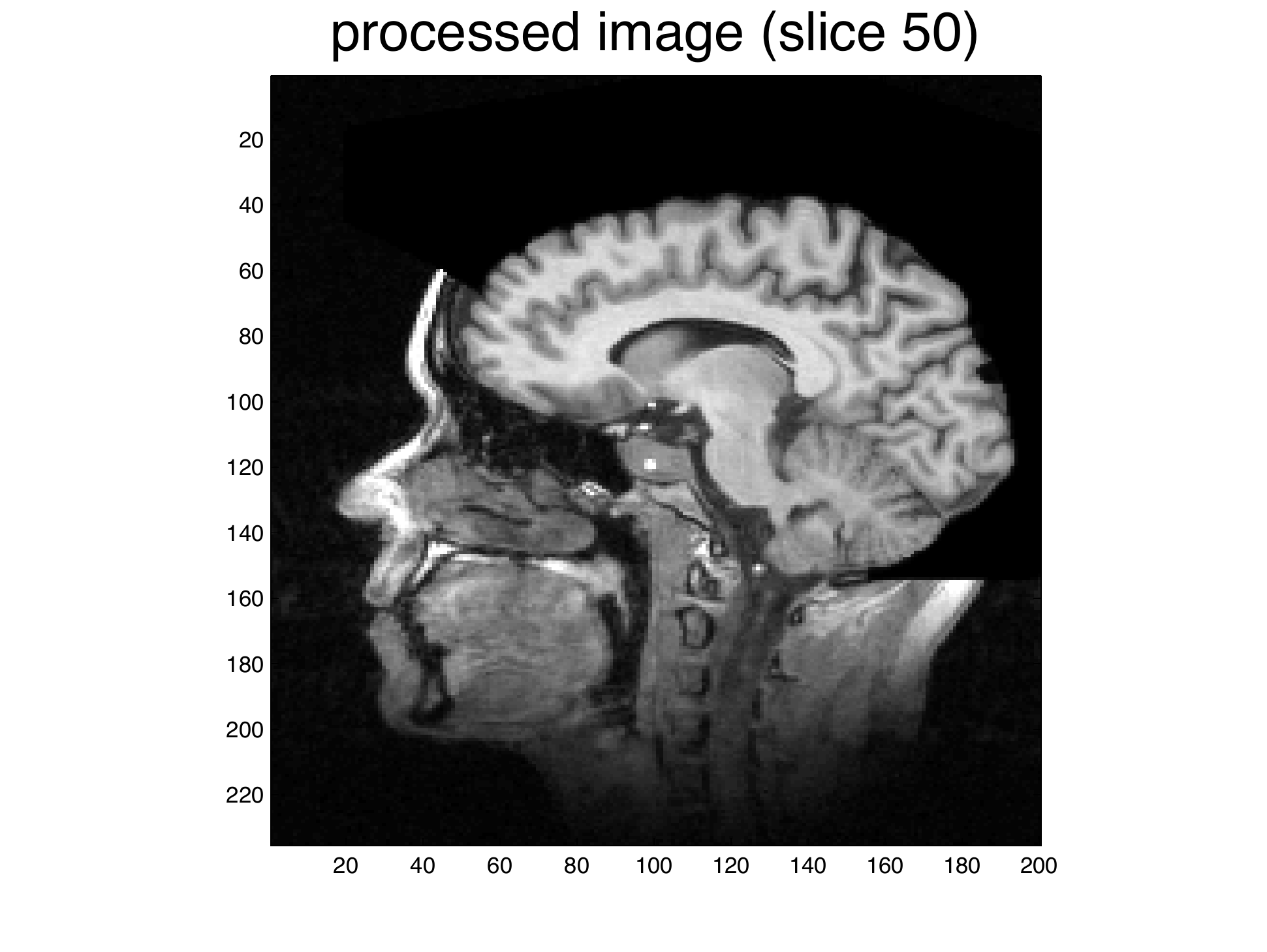}
\hspace{-1.3cm}
\includegraphics[width=6.5cm]{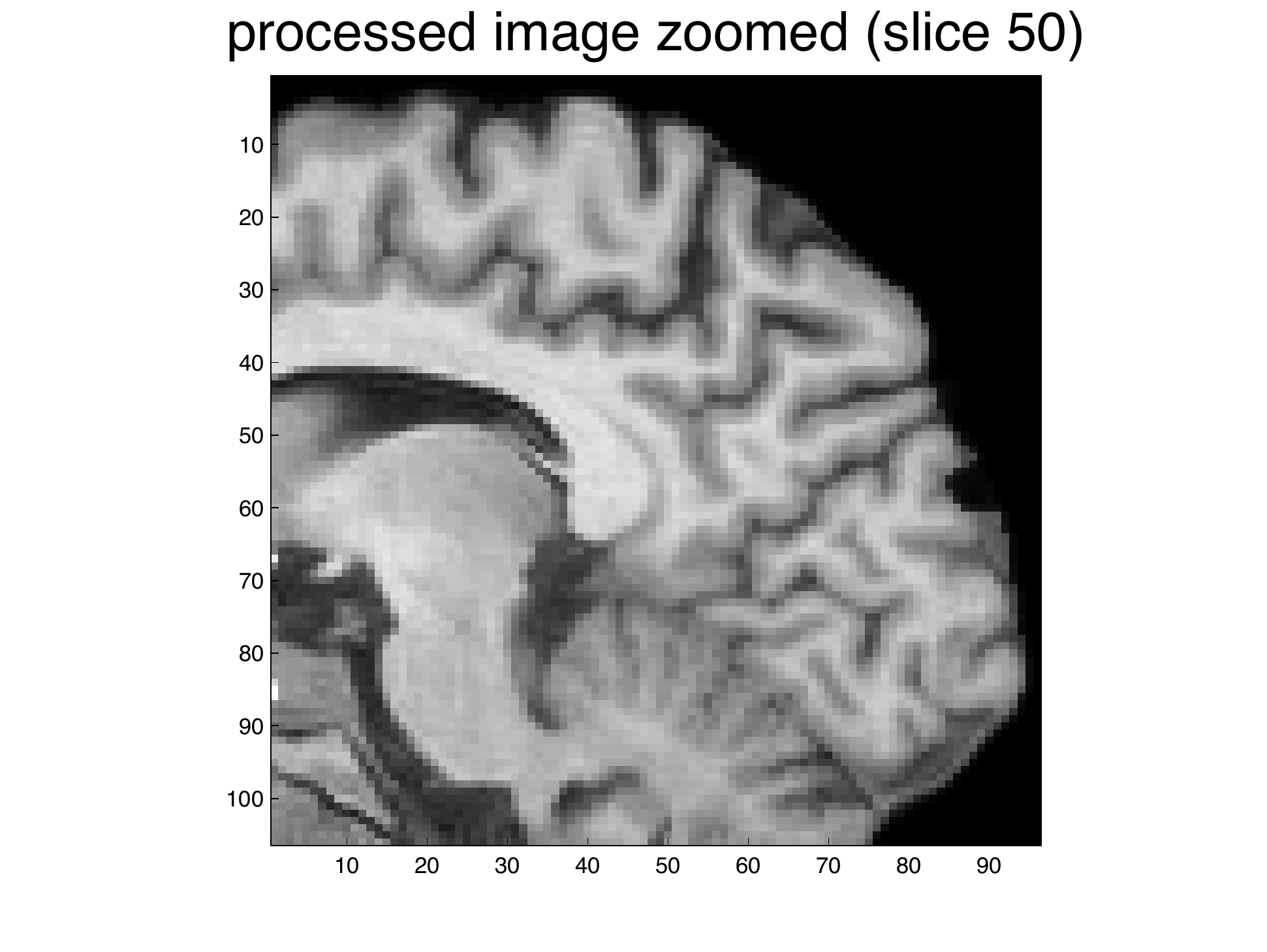}
\hspace{-1cm}
\end{center}
\caption{Top: MRI data. Bottom: Data processed via 3D Edge Enhancing diffusion, using AD-LBR. Left: 3D rendering of the original volume data and the processed volume, the 3D rendering was obtained using ImageJ 3D viewer \cite{schmid} (the effect of anisotropic diffusion is not much visible in this first representation). Center: slice of the original and processed volume. Right: details of the original and processed slices.
}
\label{fig:3Dmri}
\end{figure*}

\section*{Conclusion}
We introduced in this paper a new numerical scheme, AD-LBR, for anisotropic diffusion in image processing.
This scheme is non-negative, and its stencils have a limited support: 6 points in 2D, 12 points in 3D.  The former  property implies that our scheme respects the maximum principle of Alvarez, Gui\-chard, Lions and Morel, which is an essential feature of parabolic PDEs. %the continuous Equation \eqref{pde}. 

AD-LBR outperformed all tested alternatives in a quantitative numerical experiment: a test case in which approximate numerical solutions are compared against a known analytical solution.
%In a quantitative numerical experiment, a test case with a known analytical solution, . %: the $H^1$ norm of the difference between the analytical and numerical solutions was re
In a second qualitative test case, different schemes were used to enhance a fingerprint image. 
%From a qualitative point of view, o
Our scheme appears here to close more efficiently the lines of the fingerprint, and to diffuse less orthogonally to the lines. % and the convergence to the steady state seems to be slower, this indicates a smaller diffusion orthogonally to the lines. 
This is precisely the purpose of the implemented PDE, coherence enhancing diffusion.
%This is a desirable feature in coherence-enhancing diffusion processes. 
We also presented a 3-dimensional implementation as a proof of feasibility.

The construction of the stencils of the AD-LBR is both original and non-trivial. The computational load for this aspect of the algorithm is fortunately not dominant, thanks to the use of a tool from discrete geometry: lattice basis reduction.
The AD-LBR also allows to use larger time steps than most of its counterparts, in explicit discretizations of parabolic equations.

AD-LBR trivially extends to vector valued and matrix valued images, by applying it on each image component independently. (In other words, the coupling between image components lies in the construction of the common diffusion tensor $\Diff$, which AD-LBR regards as user input.) 
Future work will be devoted to the application of AD-LBR to the regularization of diffusion tensor fields, arising for instance from diffusion MRI, for which we expect it to be particularly appropriate: thanks to the scheme non-negativity, positive-definiteness is naturally preserved. 
%Contrary to an approach presented in \cite{??}, there is no need to back-project at each time step on the convex cone of positive definite matrices.
%Thanks to its non-negativity, AD-LBR is expected to be particularly suitable for the diffusion of fields of positive definite matrices. An essential difficulty with multivalued images lies however in the design of a relevant, application dependent, diffusion tensor $\Diff$ - which AD-LBR regards as user input.

%\begin{acknowledgements}
%If you'd like to thank anyone, place your comments here
%and remove the percent signs.
%\end{acknowledgements}

% BibTeX users please use one of
%\bibliographystyle{spbasic}      % basic style, author-year citations
%\bibliographystyle{spmpsci}      % mathematics and physical sciences
%\bibliographystyle{spphys}       % APS-like style for physics
%\bibliography{}   % name your BibTeX data base

% Non-BibTeX users please use

\end{document}